\numberwithin{equation}{section}
\numberwithin{equation}{section}
\theoremstyle{plain}
\newtheorem{theorem}[equation]{Theorem}
\newtheorem{conjecture}[equation]{Conjecture}
\newtheorem{lemma}[equation]{Lemma}
\newtheorem{corollary}[equation]{Corollary}
\newtheorem{proposition}[equation]{Proposition}
\theoremstyle{definition}
\newtheorem{definition}[equation]{Definition}
\newtheorem{remark}[equation]{Remark}
\theoremstyle{remark}
\newcommand{\R}{\mathbb{R}}
\newcommand{\C}{\mathbb{C}}
\newcommand{\B}{\mathbb{B}}
\newcommand{\uhp}{\mathbb{H}}
\newcounter{alphabet}
\newcounter{minutes}\setcounter{minutes}{\time}
\newcounter{hours}\setcounter{hours}{\time}
\begin{document}
\bibliographystyle{amsplain}
\title[Triangular Ratio Metric in the Unit Disk]
{
Triangular Ratio Metric in the Unit Disk
}

\def\thefootnote{}
\footnotetext{
\texttt{\tiny File:~\jobname .tex,
          printed: \number\year-\number\month-\number\day,
          \thehours.\ifnum\theminutes<10{0}\fi\theminutes}
}
\makeatletter\def\thefootnote{\@arabic\c@footnote}\makeatother

\author[O. Rainio]{Oona Rainio}
\address{Department of Mathematics and Statistics, University of Turku, FI-20014 Turku, Finland}
\email{ormrai@utu.fi}
\author[M. Vuorinen]{Matti Vuorinen}
\address{Department of Mathematics and Statistics, University of Turku, FI-20014 Turku, Finland}
\email{vuorinen@utu.fi}

\keywords{H\"older continuity, hyperbolic geometry, midpoint rotation, quasiconformal mappings, triangular ratio metric.}
\subjclass[2010]{Primary 51M10; Secondary 30C65}
\begin{abstract}
The triangular ratio metric is studied in a domain $G\subsetneq\R^n$, $n\geq2$. Several sharp bounds are proven for this metric, especially, in the case where the domain is the unit disk of the complex plane. The results are applied to study the H\"older continuity of quasiconformal mappings.
\end{abstract}
\maketitle

\section{Introduction}

In geometric function theory, metrics are often used to define new types of geometries of subdomains of the Euclidean, Hilbert, Banach and other metric spaces \cite{gh,gmp,hkst,v99}. One can introduce a metric topology and build new types of geometries of a domain $G\subset\R^n$, $n\geq2$, based on metrics. Since the local behavior of functions defined on $G$ is an important area of study, it is natural to require that, given a point in $G$, a metric recognizes points close to it from the boundary $\partial G$. 

Thus, certain constraints on metrics are necessary. A natural requirement
is that the distance defined by a metric for given two points $x,y\in G$ takes into account both how far the points are from each other and also their location with respect to the boundary. Indeed, we require that the closures of the balls defined by the metrics do not intersect the boundary $\partial G$ of the domain. We call these type of metrics \emph{intrinsic metrics}. A generic example of an intrinsic metric is the \emph{hyperbolic metric} \cite{bm} of a planar domain, or its generalization, the \emph{quasihyperbolic metric} \cite{gp} defined in all proper subdomains of $G\subsetneq\R^n$, $n\geq2$.

In the recent years, new kinds of intrinsic geometries have been introduced by numerous authors, see \cite[pp. 18-19]{hkvbook}. A. Papadopoulos lists in \cite[pp. 42-48]{p14} twelve metrics recurrent in function theory. Because there are differences how these metrics catch certain intricate features of functions, using several metrics is often imperative. We might further specify the properties of the metrics by requiring that the intrinsic metric should be compatible with the function classes studied. For instance, some kind of a quasi-invariance property is often valuable. Recall that the hyperbolic metric of a planar domain $G$ is invariant under conformal automorphisms of $G$.
 
In 2002, P. H\"ast\"o \cite{h} introduced the \emph{triangular ratio metric}, defined in a domain $G\subsetneq\R^n$ as the function $s_G:G\times G\to[0,1]$,
\begin{align*}
s_G(x,y)=\frac{|x-y|}{\inf_{z\in\partial G}(|x-z|+|z-y|)}. 
\end{align*}
This metric was studied recently in \cite{chkv,fhmv,hvz}, and our goal here is to continue this investigation. We introduce new methods for estimating the triangular ratio metric in terms of several other metrics and establish several results with sharp constants.

In order to compute the value of the triangular ratio metric between points $x$ and $y$ in a domain $G$, we must find a point $z$ on the boundary of $G$ that gives the infimum for the sum $|x-z|+|z-y|$. This is a very simple task if the domain is, for instance, a half-plane or a polygon, but solving the triangular ratio distance in the unit disk is a complicated problem with a very long history, see \cite{fhmv}. However, there are two special cases where this problem becomes trivial: If the points $x$ and $y$ in the unit disk are collinear with the origin or at the same distance from the origin, there are explicit formulas for the triangular ratio metric.

Since the points $x$ and $y$ can be always rotated around their midpoint to end up into one of these two special cases, we can estimate the value of the triangular ratio metric, regardless of how the original points are located in the unit disk. This rotation can be done either by using Euclidean or hyperbolic geometry, and the main result of this article is to prove that both these ways give lower and upper limits for the value of the triangular ratio metric. Note that while we study the midpoint rotation only in the two-dimensional disk, our results can be directly extended into the case $\B^n$, $n\geq3$, for the point $z$ giving the infimum is always on the same two-dimensional disk as $x$, $y$, and the origin. 

The structure of this article is as follows. First, we show a few simple ways to find bounds for the triangular ratio metric in Section \ref{section_3}. We define the Euclidean midpoint rotation and prove the inequalities related to it in Section \ref{section_emr} and then do the same for the hyperbolic midpoint rotation in Section \ref{section_hmr}. Finally, in Section \ref{section_holder}, we explain how finding better bounds for the triangular ratio metric can be useful for studying $K$-quasiconformal mappings in the unit disk.

{\bf Acknowledgements.} The authors are indebted to Professor Masayo Fujimura and Professor Marcelina Mocanu for their kind help in connection with the proof of Theorem  \ref{fujimuras_result}. The research of the first author was supported by Finnish Concordia Fund.

\section{Preliminaries}\label{section_pre}

Let $G$ be some non-empty, open, proper and connected subset of $\R^n$. For all $x\in G$, $d_G(x)$ is the Euclidean distance $d(x,\partial G)=\inf\{|x-z|\text{ }|\text{ }z\in\partial G\}$. Other than the triangular ratio metric defined earlier, we will need the following hyperbolic type metrics:

The \emph{$j^*_G$-metric} $j^*_G:G\times G\to[0,1],$
\begin{align*}
j^*_G(x,y)=\frac{|x-y|}{|x-y|+2\min\{d_G(x),d_G(y)\}},    
\end{align*}
the \emph{point pair function} $p_G:G\times G\to[0,1],$
\begin{align*}
p_G(x,y)=\frac{|x-y|}{\sqrt{|x-y|^2+4d_G(x)d_G(y)}}   
\end{align*}
and the \emph{Barrlund metric} $b_{G,p}:G\times G\to[0,\infty)$,
\begin{align*}
b_{G,p}(x,y)=\sup_{z\in\partial G}\frac{|x-y|}{(|x-z|^p+|z-y|^p)^{1\slash p}}.
\end{align*}
Note that the function $p_G$ is not metric in all domains \cite[Rmk 3.1 p. 689]{chkv}.

The hyperbolic metric is defined as
\begin{align*}
\text{ch}\rho_{\uhp^n}(x,y)&=1+\frac{|x-y|^2}{2d_{\uhp^n}(x)d_{\uhp^n}(y)},\quad x,y\in\uhp^n,\\
\text{sh}^2\frac{\rho_{\B^n}(x,y)}{2}&=\frac{|x-y|^2}{(1-|x|^2)(1-|y|^2)},\quad x,y\in\B^n
\end{align*}
in the upper half-plane $\uhp^n$ and in the Poincaré unit ball $\B^n$, respectively \cite[(4.8), p. 52; (4.14), p. 55]{hkvbook}. In the two-dimensional unit disk,
\begin{align*}
\text{th}\frac{\rho_{\B^2}(x,y)}{2}&=\text{th}(\frac{1}{2}\log(\frac{|1-x\overline{y}|+|x-y|}{|1-x\overline{y}|-|x-y|}))=|\frac{x-y}{1-x\overline{y}}|=\frac{|x-y|}{A[x,y]},
\end{align*}
where $\overline{y}$ is the complex conjugate of $y$ and $A[x,y]=\sqrt{|x-y|^2+(1-|x|^2)(1-|y|^2)}$ is the Ahlfors bracket \cite[(3.17) p. 39]{hkvbook}. The hyperbolic segment between points $x$ and $y$ is denoted by $J[x,y]$, while Euclidean lines, line segments, balls and spheres are written in forms $L(x,y)$, $[x,y]$, $B^n(x,r)$ and $S^{n-1}(x,r)$, respectively, just like in \cite[pp. vii-xi]{hkvbook}. Note that if the center $x$ or the radius $r$ is not specified in the notations $B^n(x,r)$ and $S^{n-1}(x,r)$, it means that $x=0$ and $r=1$. The hyperbolic ball is denoted by $B_\rho^n(q,R)$, as in the following lemma.

\begin{lemma}\label{lem_rhoball}\emph{\cite[(4.20) p. 56]{hkvbook}}
The equality $B_\rho^n(q,R)=B^n(j,h)$ holds, if 
\begin{align*}
j=\frac{q(1-t^2)}{1-|q|^2t^2},\quad
h=\frac{(1-|q|^2)t}{1-|q|^2t^2}\quad\text{and}\quad
t={\rm th}\left(\frac{R}{2}\right).
\end{align*}
\end{lemma}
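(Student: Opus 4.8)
The plan is to read the inequality $\rho_{\B^n}(x,q)\le R$ directly off the formula
\[
\mathrm{sh}^2\frac{\rho_{\B^n}(x,q)}{2}=\frac{|x-q|^2}{(1-|x|^2)(1-|q|^2)}
\]
and recognise it as a Euclidean ball. Setting $u:=\mathrm{sh}^2(R/2)$, the condition $\rho_{\B^n}(x,q)\le R$ is equivalent to $|x-q|^2\le u(1-|x|^2)(1-|q|^2)$; expanding and collecting the terms containing $|x|^2$ turns this into
\[
c\,|x|^2-2\,x\cdot q+|q|^2-u(1-|q|^2)\le 0,\qquad c:=1+u(1-|q|^2)>0 .
\]
Dividing by $c$ and completing the square shows that $B_\rho^n(q,R)$ is exactly the Euclidean ball centred at $q/c$ with radius $h$ satisfying $h^2=|q|^2/c^2-\bigl(|q|^2-u(1-|q|^2)\bigr)/c$ (the right-hand side being $\ge 0$ because the ball is nonempty). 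In particular this reproves, in passing, that hyperbolic balls in $\B^n$ are Euclidean balls.

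It then remains to rewrite the constants in terms of $t=\mathrm{th}(R/2)$. Since $1+u=\mathrm{ch}^2(R/2)$, one has $t^{2}=u/(1+u)$, hence $u=t^{2}/(1-t^{2})$ and $c=(1-|q|^{2}t^{2})/(1-t^{2})$, so $j=q/c=q(1-t^{2})/(1-|q|^{2}t^{2})$; substituting the same expressions into the formula for $h^{2}$ and simplifying gives $h^{2}=t^{2}(1-|q|^{2})^{2}/(1-|q|^{2}t^{2})^{2}$, i.e.\ $h=(1-|q|^{2})t/(1-|q|^{2}t^{2})$, which is the assertion.

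A shorter, more geometric route leads to the same numbers: once one knows (for instance from Möbius invariance of $\rho_{\B^n}$ together with the fact that a bounded Möbius image of a sphere is again a sphere, not a hyperplane) that $B_\rho^n(q,R)=B^n(j,h)$ for some $j,h$, the rotational symmetry of the configuration about the line $\ell$ through $0$ and $q$ forces $j\in\ell$, so $j$ and $h$ are pinned down by the two points in which $\partial B_\rho^n(q,R)$ meets $\ell$. Parametrising $\ell$ by signed distance to the origin and using $\mathrm{th}(\rho_{\B^2}(x,y)/2)=|x-y|/|1-x\overline y|$ on the real axis, those endpoints come out as $a=(|q|-t)/(1-t|q|)$ and $b=(|q|+t)/(1+t|q|)$, whence $j=\tfrac12(a+b)\,q/|q|$ and $h=\tfrac12(b-a)$; the same elementary algebra then yields the stated formulas.

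I expect the only genuine difficulty to be bookkeeping — keeping the substitution $u=\mathrm{sh}^2(R/2)$ versus $t=\mathrm{th}(R/2)$ straight and simplifying the expression for $h$ without error — together with the degenerate case $q=0$, which should be treated separately (or by continuity): there $c=1$, $j=0$, $h=t$, consistent with $\mathrm{th}(\rho_{\B^n}(0,x)/2)=|x|$.
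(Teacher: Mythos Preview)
The paper does not prove this lemma at all; it merely quotes the formulas from \cite[(4.20) p.~56]{hkvbook} as a known fact and uses them later. Your argument is therefore not being compared against an existing proof but is supplying one, and it is correct: the expansion of $|x-q|^2\le u(1-|x|^2)(1-|q|^2)$ with $u=\mathrm{sh}^2(R/2)$, the completion of the square yielding centre $q/c$ with $c=1+u(1-|q|^2)$, and the substitution $u=t^2/(1-t^2)$ all check out and give exactly the stated $j$ and $h$. Your alternative route via the two diametral points $(|q|\pm t)/(1\pm t|q|)$ on the line through $0$ and $q$ is also valid and is in fact closer in spirit to how such formulas are usually derived in the cited reference. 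The remark about $q=0$ is fine; continuity or direct inspection handles it.
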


For the results of Section \ref{section_hmr}, the formula for the hyperbolic midpoint is needed.

\begin{theorem}\cite[Thm 1.4, p.3]{wvz}
For all $x,y\in\B^2$, the hyperbolic midpoint $q$ of $J[x,y]$ with $\rho_{\B^2}(x,q)=\rho_{\B^2}(q,y)=\rho_{\B^2}(x,y)\slash2$ is given by
\begin{align*}
q=\frac{y(1-|x|^2)+x(1-|y|^2)}{1-|x|^2|y|^2+A[x,y]\sqrt{(1-|x|^2)(1-|y|^2)}}.   
\end{align*}
\end{theorem}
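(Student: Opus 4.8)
The plan is to reduce to the normalized situation $x=0$ by a conformal automorphism of $\B^2$. Recall that $\rho_{\B^2}$ is invariant under the M\"obius automorphisms of $\B^2$ and that these carry hyperbolic geodesics to hyperbolic geodesics; since the hyperbolic midpoint of a geodesic segment is unique, such a map sends the hyperbolic midpoint of $J[x,y]$ to that of the image segment. Thus it suffices to find the midpoint when one endpoint is the origin and to transport the answer back.

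Assume $x\neq y$ (the case $x=y$ being trivial) and put $T(z)=(z-x)/(1-\overline{x}z)$, so that $T(x)=0$ and $w:=T(y)\in\B^2\setminus\{0\}$. A hyperbolic geodesic through the origin is a Euclidean diameter, hence the hyperbolic midpoint of $J[0,w]$ has the form $\lambda w$ with $\lambda\in(0,1)$. From the identity $\text{th}(\rho_{\B^2}(u,v)/2)=|u-v|/A[u,v]$ recorded above one gets $\text{th}(\rho_{\B^2}(0,\lambda w)/2)=\lambda|w|$ and $\text{th}(\rho_{\B^2}(\lambda w,w)/2)=(1-\lambda)|w|/(1-\lambda|w|^2)$, so the requirement $\rho_{\B^2}(0,\lambda w)=\rho_{\B^2}(\lambda w,w)$ becomes $|w|^2\lambda^2-2\lambda+1=0$, whose unique root in $(0,1)$ is $\lambda=1/(1+\sqrt{1-|w|^2})$. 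Consequently the midpoint of $J[0,w]$ is $m=w/(1+\sqrt{1-|w|^2})$, and the point we seek is $q=T^{-1}(m)$ with $T^{-1}(u)=(u+x)/(1+\overline{x}u)$.

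It remains to simplify $q=T^{-1}\!\bigl(w/(1+s)\bigr)$, where $s=\sqrt{1-|w|^2}$. Setting $D=1-\overline{x}y$ and substituting $w=(y-x)/D$, short computations give $w+x=y(1-|x|^2)/D$ and $1+\overline{x}w=(1-|x|^2)/D$, so that $\[ q=\frac{y(1-|x|^2)+xsD}{(1-|x|^2)+sD}. \]$ Now use the standard identity $|1-\overline{x}y|^{2}-|x-y|^{2}=(1-|x|^{2})(1-|y|^{2})$, that is $A[x,y]=|1-x\overline{y}|=|D|$, together with $1-|w|^{2}=(1-|x|^{2})(1-|y|^{2})/|D|^{2}$; these yield $sD\,\overline{D}=s|D|^{2}=A[x,y]\sqrt{(1-|x|^2)(1-|y|^2)}$. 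Multiplying the numerator and denominator in the last display by $\overline{D}=1-x\overline{y}$ and then matching with the asserted expression by cross-multiplication — the leftover terms cancel in pairs because $\overline{D}-1=-x\overline{y}$ and $(1-|x|^{2}|y|^{2})-(1-|x|^{2})=|x|^{2}(1-|y|^{2})$ — produces the stated formula for $q$.

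The step I expect to be the main obstacle is this last piece of algebra: the non-holomorphic factor $D/|D|$ (equivalently, the argument of $1-\overline{x}y$) survives until the denominators are cleared, and one must verify that all of the resulting cross terms vanish. An alternative that sidesteps the map $T$ is direct verification — show that the displayed $q$ obeys $\text{th}(\rho_{\B^2}(x,q)/2)=\text{th}(\rho_{\B^2}(q,y)/2)$ by the half-angle identity and that $T(q)$ is a positive multiple of $T(y)$, so that $q\in J[x,y]$ — but this route requires essentially the same bookkeeping with $|1-\overline{x}y|$.
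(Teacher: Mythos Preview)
The paper does not supply a proof of this statement; it is quoted verbatim from \cite[Thm 1.4]{wvz}. So there is nothing in the present paper to compare against, and your proposal stands on its own.

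Your argument is correct. The reduction via the automorphism $T(z)=(z-x)/(1-\overline{x}z)$ is the natural move, and your computation of the midpoint of $J[0,w]$ as $w/(1+\sqrt{1-|w|^2})$ is right. The intermediate form
\[
q=\frac{y(1-|x|^2)+xsD}{(1-|x|^2)+sD},\qquad D=1-\overline{x}y,\quad s=\sqrt{(1-|x|^2)(1-|y|^2)}/|D|,
\]
is also correct. For the last step you are a little telegraphic, so let me record that the cross-multiplication really does close. With $\alpha=1-|x|^2$, $\beta=1-|y|^2$, $S=A[x,y]\sqrt{\alpha\beta}=s|D|^2$, multiplying numerator and denominator by $\overline{D}$ and then cross-multiplying against the target formula leaves, after the obvious cancellations,
\[
\alpha\beta\bigl[y|x|^2+x^2\overline{y}-x(1+|x|^2|y|^2)\bigr]+xS^2.
\]
Since $y|x|^2+x^2\overline{y}=2x\,\mathrm{Re}(\overline{x}y)$ and $1+|x|^2|y|^2=|D|^2+2\,\mathrm{Re}(\overline{x}y)$, the bracket equals $-x|D|^2$; and $S^2=|D|^2\alpha\beta$, so the whole expression vanishes. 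The $S$-linear terms cancel separately because $-|y|^2\alpha+(1-|x|^2|y|^2)-\beta=0$. This is exactly the bookkeeping you anticipated, and it goes through cleanly.

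One cosmetic point: in your write-up the fragment ``so that \verb|$\[ q=\ldots \]$|'' nests display delimiters inside inline math; fix that before it hits a compiler.
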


Furthermore, the next results will be useful when studying the triangular ratio metric in the unit disk.

\begin{theorem}\label{rhojsp_inB}
\emph{\cite[p. 460]{hkvbook}} For all $x,y\in\B^n$,
\begin{align*}
{\rm th}\frac{\rho_{\B^n}(x,y)}{4}\leq j^*_{\B^n}(x,y)\leq s_{\B^n}(x,y)\leq p_{\B^n}(x,y)\leq{\rm th}\frac{\rho_{\B^n}(x,y)}{2}\leq2{\rm th}\frac{\rho_{\B^n}(x,y)}{4}.    
\end{align*}
\end{theorem}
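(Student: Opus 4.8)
The plan is to write all six terms of the chain in the form $|x-y|/c$ with $c>0$, so that the five inequalities become comparisons of the denominators $c$ read off in the reverse order. From ${\rm sh}^2(\rho_{\B^n}(x,y)/2)=|x-y|^2/((1-|x|^2)(1-|y|^2))$ one gets ${\rm th}(\rho_{\B^n}(x,y)/2)=|x-y|/A$ with $A=\sqrt{|x-y|^2+(1-|x|^2)(1-|y|^2)}$, and the half-angle formula ${\rm th}(t/2)={\rm th}\,t/(1+\sqrt{1-{\rm th}^2 t})$ then yields ${\rm th}(\rho_{\B^n}(x,y)/4)=|x-y|/(A+\sqrt{(1-|x|^2)(1-|y|^2)})$. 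Writing $m=|x-y|$, $d_x=1-|x|$, $d_y=1-|y|$ and $P=(1-|x|^2)(1-|y|^2)$, the chain is therefore equivalent to
\begin{align*}
A+\sqrt P\ \geq\ m+2\min\{d_x,d_y\}\ &\geq\ \inf_{z\in S^{n-1}}\bigl(|x-z|+|z-y|\bigr)\\
&\geq\ \sqrt{m^2+4d_xd_y}\ \geq\ A\ \geq\ \tfrac12\bigl(A+\sqrt P\bigr).
\end{align*}
The last link holds because $A\geq\sqrt P$, the fourth because $4d_xd_y\geq P$ is the same as $(1+|x|)(1+|y|)\leq 4$, and the second (which is $j^*_{\B^n}\leq s_{\B^n}$, valid in every domain) follows by picking $z_0\in\partial\B^n$ with $|x-z_0|=\min\{d_x,d_y\}=d_{\B^n}(x)$ and using $|x-z_0|+|z_0-y|\leq 2d_{\B^n}(x)+|x-y|$.

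The main obstacle is the third link, $s_{\B^n}\leq p_{\B^n}$, equivalently $(|x-z|+|z-y|)^2\geq m^2+4d_xd_y$ for every $z\in S^{n-1}$. The plan here is a reflection trick: fix $z$, let $T_z=\{w\in\R^n:\langle w,z\rangle=1\}$ be the hyperplane tangent to $S^{n-1}$ at $z$, and let $\widetilde y=y+2(1-\langle y,z\rangle)z$ be the reflection of $y$ in $T_z$. Since this reflection is an isometry fixing $z$, one has $|z-y|=|z-\widetilde y|$, so $|x-z|+|z-y|\geq|x-\widetilde y|$; expanding, $|x-\widetilde y|^2=m^2+4(1-\langle x,z\rangle)(1-\langle y,z\rangle)$, and by Cauchy--Schwarz $1-\langle x,z\rangle\geq d_x>0$ and $1-\langle y,z\rangle\geq d_y>0$, giving $|x-\widetilde y|^2\geq m^2+4d_xd_y$. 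Taking the infimum over $z\in S^{n-1}$ finishes this link. I expect this to be the only step requiring an idea rather than bookkeeping (and it works verbatim in all dimensions, so no reduction to a plane is needed).

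It remains to handle the first link $A+\sqrt P\geq m+2\min\{d_x,d_y\}$. Fixing $|x|$ and $|y|$, its two sides differ by $\bigl(\sqrt{m^2+P}-m\bigr)+\bigl(\sqrt P-2\min\{d_x,d_y\}\bigr)$, and since $t\mapsto\sqrt{t^2+P}-t$ is decreasing and $m\leq|x|+|y|$, it suffices to verify the inequality at $m=|x|+|y|$. There the key simplification is $A^2=(|x|+|y|)^2+(1-|x|^2)(1-|y|^2)=(1+|x||y|)^2$; assuming $|x|\geq|y|$ so that $\min\{d_x,d_y\}=1-|x|$, the inequality reduces to $\sqrt P\geq(1-|x|)(1+|y|)$, which after squaring and cancelling $(1-|x|)(1+|y|)>0$ becomes $(1+|x|)(1-|y|)\geq(1-|x|)(1+|y|)$, i.e.\ $|x|\geq|y|$. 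This closes the chain.
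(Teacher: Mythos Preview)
Your proof is correct. Note, however, that the paper does not actually prove this theorem: it is quoted verbatim from \cite[p.~460]{hkvbook}, so there is no argument in the paper to compare yours against.

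For the record, your approach is the standard one. The reflection trick you use for $s_{\B^n}\leq p_{\B^n}$ is essentially the proof of \cite[Lemma~11.6(1)]{hkvbook} (which the present paper cites later for the general convex-domain inequality $s_G\leq p_G$), and your computation $|x-\widetilde y|^2=m^2+4(1-\langle x,z\rangle)(1-\langle y,z\rangle)$ is correct. The reduction of the first link $A+\sqrt P\geq m+2\min\{d_x,d_y\}$ to the extremal case $m=|x|+|y|$ via monotonicity of $t\mapsto\sqrt{t^2+P}-t$ is clean, and the identity $A^2=(1+|x||y|)^2$ at that endpoint makes the remaining verification immediate. One small presentational point: when you write $|x-z_0|=\min\{d_x,d_y\}=d_{\B^n}(x)$ you are tacitly assuming $d_x\leq d_y$; this is harmless by symmetry but worth saying explicitly.
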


\begin{theorem}\label{find_zinB}
\emph{\cite[p. 138]{fhmv}} For all $x,y\in\B^n$, the radius drawn to the point $z$ giving the infimum $\inf_{z\in S^{n-1}}(|x-z|+|z-y|)$ bisects the angle $\measuredangle XZY$.
\end{theorem}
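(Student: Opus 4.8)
The plan is to treat this as a constrained optimization problem and apply the first-order (Lagrange multiplier) optimality condition. Set $f(z)=|x-z|+|y-z|$ for $z$ ranging over the compact set $S^{n-1}$. Since $f$ is continuous, the infimum is attained at some $z_0\in S^{n-1}$, and because $x,y\in\B^n$ lie in the \emph{open} ball while $z_0\in S^{n-1}$, we have $z_0\neq x$ and $z_0\neq y$, so $f$ is differentiable near $z_0$ with
\begin{align*}
\nabla f(z)=\frac{z-x}{|z-x|}+\frac{z-y}{|z-y|}=:u+v,
\end{align*}
the sum of the unit vectors $u=(z-x)/|z-x|$ and $v=(z-y)/|z-y|$.

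At the minimizer $z_0$ the gradient of $f$ must be normal to $S^{n-1}$ at $z_0$, i.e. parallel to $z_0$: there is $\lambda\in\R$ with $u+v=\lambda z_0$. First I would rule out $u+v=0$; this would force $u=-v$, which means $z_0$ lies on the open segment $(x,y)\subset\B^n$, impossible since $z_0\in S^{n-1}$. Hence $\lambda\neq0$ and $z_0$ is a nonzero multiple of $u+v$. Now I invoke the elementary fact that for unit vectors $u,v$ with $u+v\neq0$ the vector $u+v$ points along the bisector of the angle between $u$ and $v$. The angle $\measuredangle XZ_0Y$ is by definition the angle at the vertex $z_0$ between the rays $z_0\to x$ and $z_0\to y$, i.e. between the directions $-u$ and $-v$, and its bisector direction is $-(u+v)$. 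Since $z_0$ is parallel to $u+v$, the line through the origin and $z_0$ — the radius — carries this bisector, which is the assertion. (A short sign check, $u\cdot z_0>0$ and $v\cdot z_0>0$ because $x\cdot z_0\le|x|<1$, confirms that $\lambda>0$ and that the portion of the radius pointing from $z_0$ toward the origin is the \emph{internal} bisector, consistent with the angle opening into the ball.)

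An alternative, more geometric route (cleanest for $n=2$) is to note that the level sets $\{f=c\}$ are ellipses with foci $x,y$ (for $n\ge3$, ellipsoids of revolution about the line $L(x,y)$). The minimizing value $c_0=f(z_0)=\min_{S^{n-1}}f$ is the smallest $c$ for which this surface meets $S^{n-1}$: for $c<c_0$ the solid ellipsoid $\{f\le c\}$ is connected, contains $[x,y]\subset\B^n$, and is disjoint from $S^{n-1}$, hence lies in $\B^n$; so at $c=c_0$ the ellipsoid is internally tangent to $S^{n-1}$ at $z_0$. The two smooth hypersurfaces therefore share a tangent hyperplane and a normal line at $z_0$: the normal to $S^{n-1}$ is the radius, while by the reflection property of the ellipse the normal at $z_0$ bisects the angle between the focal radii $[z_0,x]$ and $[z_0,y]$; comparing gives the result.

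I expect no serious obstacle here: the only points needing care are the degenerate configuration $u+v=0$ (equivalently $z_0\in[x,y]$), which cannot occur since $z_0\in S^{n-1}$ and $x,y\in\B^n$, and the routine verification that the stationary point delivered by the multiplier rule is the genuine minimizer — both of which are dispatched in a line. The argument extends verbatim to $\B^n$, $n\ge3$, since everything takes place in the (at most) two-dimensional affine plane through $x$, $y$, and $0$.
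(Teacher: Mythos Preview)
The paper does not supply its own proof of this statement: Theorem~\ref{find_zinB} is quoted from \cite[p.~138]{fhmv} without argument, so there is nothing in the present paper to compare your proposal against. Your Lagrange-multiplier proof is correct and complete --- the key points (attainment by compactness, smoothness of $f$ at $z_0$ because $z_0\notin\{x,y\}$, exclusion of $u+v=0$ via $[x,y]\subset\B^n$, and the bisector property of the diagonal of a rhombus) are all handled. The alternative ellipse-tangency argument is the classical geometric version and is equally valid; it is in fact closer in spirit to the treatment in \cite{fhmv}, which approaches the extremal problem through confocal ellipses.
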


\begin{lemma}\label{lem_smetricinB_collinear}
\emph{\cite[11.2.1(1) p. 205]{hkvbook}}
For all $x,y\in\B^n$,
\begin{align*}
s_{\B^n}(x,y)\leq\frac{|x-y|}{2-|x+y|},    
\end{align*}
where the equality holds if the points $x,y$ are collinear with the origin.
\end{lemma}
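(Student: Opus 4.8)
The plan is to reduce the claimed bound to the single estimate
\[
\inf_{z\in S^{n-1}}\bigl(|x-z|+|z-y|\bigr)\ \ge\ 2-|x+y|,
\]
which, together with $|x+y|<|x|+|y|<2$, immediately yields $s_{\B^n}(x,y)\le |x-y|/(2-|x+y|)$. Observe that the crude per-point estimates $|x-z|\ge 1-|x|$ and $|z-y|\ge 1-|y|$ only produce the weaker denominator $2-|x|-|y|$, so a sharper idea is needed: one must use that $z$ lies on a \emph{sphere}, not merely that $|x|,|y|<1$.

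First I would fix an arbitrary $z\in S^{n-1}$, set $c:=|x-z|+|z-y|$, and let $m:=(x+y)/2$ be the midpoint of the segment $[x,y]$. The point $z$ lies on the ellipsoid $\{w:|w-x|+|w-y|=c\}$ with foci $x,y$, and every point $w$ of this set satisfies
\[
|w-m|=\tfrac12\bigl|(w-x)+(w-y)\bigr|\le\tfrac12\bigl(|w-x|+|w-y|\bigr)=\tfrac{c}{2}.
\]
Applying this with $w=z$ and then the triangle inequality gives $1=|z|\le |m|+|z-m|\le |m|+c/2$, i.e. $c\ge 2-2|m|=2-|x+y|$. Taking the infimum over $z\in S^{n-1}$ proves the displayed estimate, hence the bound on $s_{\B^n}$.

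For the equality statement, assume $x,y$ are collinear with the origin and write $x=\alpha u$, $y=\beta u$ with $u\in S^{n-1}$ and $\alpha,\beta\in(-1,1)$. Choosing $z=u$ when $\alpha+\beta\ge 0$ and $z=-u$ when $\alpha+\beta<0$, a direct computation using $\alpha,\beta\in(-1,1)$ gives $|x-z|+|z-y|=2-|\alpha+\beta|=2-|x+y|$. Combined with the lower bound from the previous step, the infimum over $z\in S^{n-1}$ equals $2-|x+y|$ and is attained, so equality holds.

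The only points that require care — and in that sense the ``main obstacle'' — are (i) recognizing that the per-point triangle inequalities are too lossy and must be replaced by the observation that the whole ellipsoid through $z$ with foci $x,y$ is contained in the ball of radius $c/2$ about the midpoint $m$ (this is where the factor improving $2-|x|-|y|$ to $2-|x+y|$ comes from); and (ii) choosing the sign of $z$ correctly in the equality case when $x$ and $y$ lie on opposite sides of the origin, so that $|x-z|+|z-y|$ collapses to $2-|\alpha+\beta|$ rather than $2+|\alpha+\beta|$. Everything else is routine.
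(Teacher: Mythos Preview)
Your argument is correct. The key step---bounding the distance from the boundary point $z$ to the midpoint $m=(x+y)/2$ by half the ellipse sum $c$, and then applying $|z|\le|m|+|z-m|$---cleanly yields $c\ge 2-|x+y|$, and your treatment of the equality case is accurate (minor quibble: you wrote $|x+y|<|x|+|y|$, but this can be an equality; what you need and use is only $|x+y|\le|x|+|y|<2$).

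As for comparison with the paper: this lemma is not proved in the paper itself but is quoted from the reference \cite{hkvbook}, so there is no in-paper argument to set yours against. Your proof is self-contained and short enough that it could serve as an inline justification in place of the bare citation.
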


\begin{theorem}\label{smetricinB_forconjugate}
\emph{\cite[Thm 3.1, p. 276]{hkvz}} If $x=h+ki\in\B^2$ with $h,k>0$, then
\begin{align*}
s_{\B^2}(x,\overline{x})&=|x|\text{ if }|x-\frac{1}{2}|>\frac{1}{2},\\
s_{\B^2}(x,\overline{x})&=\frac{k}{\sqrt{(1-h)^2+k^2}}\leq|x|\text{ otherwise.}
\end{align*}
\end{theorem}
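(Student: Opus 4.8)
The plan is to compute the infimum in the definition of $s_{\B^2}$ head‑on, exploiting the reflection symmetry of the pair $x,\overline x$ about the real axis. Since $\partial\B^2=S^1$ and $|x-\overline x|=2k$, we have $s_{\B^2}(x,\overline x)=2k/m$ where $m=\min_{\phi}f(\phi)$ and $f(\phi)=|e^{i\phi}-x|+|e^{i\phi}-\overline x|$; the minimum is attained by compactness of $S^1$. Expanding $|e^{i\phi}-x|^2=1+|x|^2-2h\cos\phi-2k\sin\phi$ and $|e^{i\phi}-\overline x|^2=1+|x|^2-2h\cos\phi+2k\sin\phi$ gives $f^2=2A+2\sqrt{A^2-B^2}$ with $A=1+|x|^2-2h\cos\phi$ and $B=2k\sin\phi$; note $A\ge|B|\ge0$ and, since $|x|<1$, $A^2-B^2=|e^{i\phi}-x|^2|e^{i\phi}-\overline x|^2>0$, so $f$ is smooth on $S^1$.

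Next I would locate the critical points. Using the identity $A+\sqrt{A^2-B^2}=f^2/2$, differentiation collapses to $\tfrac{d}{d\phi}(f^2)=\bigl(A'f^2-2BB'\bigr)/\sqrt{A^2-B^2}=2\sin\phi\,(hf^2-4k^2\cos\phi)/\sqrt{A^2-B^2}$. Hence a critical point satisfies either $\sin\phi=0$, i.e.\ $z=\pm1$, or $\cos\phi=hf^2/(4k^2)$. At $z=1$ one gets $f=2\sqrt{(1-h)^2+k^2}$ and at $z=-1$, $f=2\sqrt{(1+h)^2+k^2}$, the latter being strictly larger. This matches Theorem~\ref{find_zinB}: the minimizing $z$ must lie where the radius $[0,z]$ bisects $\measuredangle XZ\overline X$, and on the real axis $z=1$ is precisely such a point.

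For an ``off‑axis'' critical point I would substitute $\cos\phi=hf^2/(4k^2)$ into $f^2=2A+2\sqrt{A^2-B^2}$, express $A$ and $B^2$ in terms of $f^2$, and eliminate the square root; using $|x|^2-h^2=k^2$ this reduces, after routine algebra, to $|x|^2f^4-4k^2(1+|x|^2)f^2+16k^4=0$. Since $(1+|x|^2)^2-4|x|^2=(1-|x|^2)^2$, the two roots are $f^2=4k^2/|x|^2$ and $f^2=4k^2$; the second is extraneous because for every $z\in S^1$ one has $f(z)\ge|x-\overline x|=2k$ with equality only on $[x,\overline x]\subset\B^2$, which misses $S^1$, so $f>2k$ strictly. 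Thus every off‑axis critical point has value $f=2k/|x|$, and at such a point $\cos\phi=h/|x|^2$; such a point exists (with $\sin\phi\neq0$) if and only if $h/|x|^2<1$, i.e.\ $|x|^2>h$, which is exactly the condition $|x-\tfrac12|>\tfrac12$. Now the two cases fall out. If $|x|^2>h$: the critical values are $2\sqrt{(1\mp h)^2+k^2}$ and $2k/|x|$, and $2k/|x|\le 2\sqrt{(1-h)^2+k^2}$ because this is equivalent to $(h-|x|^2)^2\ge0$; hence $m=2k/|x|$ and $s_{\B^2}(x,\overline x)=|x|$. If $|x|^2\le h$: there are no off‑axis critical points, so $m=2\sqrt{(1-h)^2+k^2}$ and $s_{\B^2}(x,\overline x)=k/\sqrt{(1-h)^2+k^2}$; the same identity $(h-|x|^2)^2\ge0$ yields the bound $\le|x|$.

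The one delicate point is the bookkeeping in the off‑axis step: carrying out the square‑root elimination cleanly, discarding the spurious root $f=2k$, and confirming that $2k/|x|$ is the \emph{global} minimum when $|x|^2>h$ — which follows because the minimum is attained at a critical point and there are only finitely many critical values to compare. Everything else is elementary.
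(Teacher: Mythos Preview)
The paper does not supply its own proof of this statement: it is quoted in the preliminaries from \cite{hkvz} and used throughout as a black box. There is therefore nothing to compare your argument to on the paper's side.

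Your proof is correct. The factorisation $(f^2)'=2\sin\phi\,(hf^2-4k^2\cos\phi)/\sqrt{A^2-B^2}$, the resulting quartic $|x|^2f^4-4k^2(1+|x|^2)f^2+16k^4=0$, the rejection of the spurious root $f=2k$ via the strict triangle inequality, and the comparison of critical values through the identity $|x|^2\bigl((1-h)^2+k^2\bigr)-k^2=(h-|x|^2)^2$ are all sound. One small point you leave implicit is the \emph{existence} of the off-axis critical point when $|x|^2>h$: you show that any such point must have $f=2k/|x|$ and $\cos\phi=h/|x|^2$, but not, strictly speaking, that this $\phi$ really is a critical point rather than an artifact of the squaring. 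This is easily closed either by direct substitution (at $\cos\phi=h/|x|^2$ one computes $A^2-B^2=(1-|x|^2)^2$ and hence $f^2=4k^2/|x|^2$, so $hf^2=4k^2\cos\phi$ holds), or by observing that the sign of $(f^2)'(0^+)$ is that of $(1-h)(h-|x|^2)$, so that when $|x|^2>h$ the point $z=1$ is a local maximum and the global minimum on $S^1$ is forced into the interior of $(0,\pi)$. With that addition the argument is complete and self-contained.
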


\begin{remark}\label{rmk_onlyonez}
If $x,y\in\B^n$ such that $|x|=|y|$ and there is only one point $z\in S$ giving the infimum $\inf_{z\in S^{n-1}}(|x-z|+|z-y|)$, then it can be verified with Theorem \ref{find_zinB} that $z=(x+y)\slash|x+y|$.
\end{remark}

\section{Bounds for triangular ratio metric}\label{section_3}

In this section, we will introduce a few different upper and lower bounds for the triangular ratio metric in the unit disk $\B^2$, using the Barrlund metric and a special lower limit function. There are numerous similar results already in literature, but we complement them and prove that our inequalities are sharp by showing that they have the best possible constant. First, we introduce the following inequality: 

\begin{lemma}\label{upperlimfors_starlike}
For all $y\in G$, the inequality 
\begin{align*}
s_G(x,y)\leq\frac{|x-y|}{d_G(x)+\sqrt{|x-y|^2+d_G(x)^2-2d_G(x)\sqrt{|x-y|^2-d_G(y)^2}}},    
\end{align*}
holds, if the domain $G$ is starlike with respect to $x\in G$ and $d_G(x)+d_G(y)\leq|x-y|$.
\end{lemma}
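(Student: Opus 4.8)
The plan is to bound the denominator of $s_G(x,y)$ from below; once we show
\[
\inf_{z\in\partial G}\bigl(|x-z|+|z-y|\bigr)\ \ge\ d_G(x)+\sqrt{|x-y|^2+d_G(x)^2-2d_G(x)\sqrt{|x-y|^2-d_G(y)^2}},
\]
inverting gives the claim. Write $c=|x-y|$, $d_x=d_G(x)$, $d_y=d_G(y)$ and $u=\sqrt{c^2-d_y^2}$. Since $d_x,d_y>0$ and $d_x+d_y\le c$, we have $c^2\ge d_x^2+d_y^2$, so $u$ is real with $u\ge d_x$; let $\alpha\in[0,\pi/2)$ satisfy $\cos\alpha=u/c$ and $\sin\alpha=d_y/c$. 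I would use two facts about $G$: first, $B^n(x,d_x)\subset G$, so every $z\in\partial G$ has $|x-z|\ge d_x$; second, since $B^n(y,d_y)\subset G$ and $G$ is starlike with respect to $x$, every segment $[x,q]$ with $q\in B^n(y,d_y)$ lies in $G$. From the second fact: if $v$ is a unit vector making an angle $\beta<\alpha$ with $y-x$, the ray from $x$ in direction $v$ crosses $B^n(y,d_y)$, and the whole half-open segment of length $d_2(\beta):=c\cos\beta+\sqrt{d_y^2-c^2\sin^2\beta}$ (the far-intersection distance of that ray with $B^n(y,d_y)$) lies in $G$; hence a point of $\partial G$ lying in direction $v$ from $x$ is at distance at least $d_2(\beta)$ from $x$.

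Next I would fix $z\in\partial G$, put $a=|x-z|$, and let $\beta\in[0,\pi]$ be the angle between $z-x$ and $y-x$. By the law of cosines $|x-z|+|z-y|=\phi(a,\beta)$ with $\phi(a,\beta):=a+\sqrt{a^2-2ac\cos\beta+c^2}$, and the short computation $\partial\phi/\partial a=1+(a-c\cos\beta)/\sqrt{a^2-2ac\cos\beta+c^2}\ge0$ shows $\phi(\cdot,\beta)$ is non-decreasing in $a$. If $\beta\ge\alpha$, then $a\ge d_x$ and $\cos\beta\le\cos\alpha$ give $\phi(a,\beta)\ge\phi(d_x,\beta)\ge\phi(d_x,\alpha)=d_x+\sqrt{d_x^2-2d_xu+c^2}$. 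If $\beta<\alpha$, then $a\ge d_2(\beta)$ by the previous paragraph, and since $d_2(\beta)$ is a root of $t^2-2tc\cos\beta+(c^2-d_y^2)=0$ one has $\phi(d_2(\beta),\beta)=d_2(\beta)+d_y$; because $d_2$ is decreasing on $[0,\alpha]$ with $d_2(\alpha)=u$, this gives $\phi(a,\beta)\ge d_2(\beta)+d_y\ge u+d_y$.

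To close the case $\beta<\alpha$ I would check $u+d_y\ge d_x+\sqrt{d_x^2-2d_xu+c^2}$: writing $d_x^2-2d_xu+c^2=(u-d_x)^2+d_y^2$, this amounts to $u+d_y-d_x\ge\sqrt{(u-d_x)^2+d_y^2}$, whose left side is nonnegative (as $u\ge d_x$) and which on squaring reduces to $2d_y(u-d_x)\ge0$. Thus in both cases $|x-z|+|z-y|\ge d_x+\sqrt{d_x^2-2d_xu+c^2}$. Taking the infimum over $z\in\partial G$, inverting, and noting $d_x^2-2d_xu+c^2=|x-y|^2+d_G(x)^2-2d_G(x)\sqrt{|x-y|^2-d_G(y)^2}$ yields the inequality.

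The step I expect to be the main obstacle is the case $\beta<\alpha$ (directions from $x$ pointing into the cone over $B^n(y,d_y)$): there the cheap bound $|x-z|\ge d_x$ only gives $|x-z|+|z-y|\ge|x-y|$, which is strictly weaker than the asserted estimate, so one really must extract the radial estimate $a\ge d_2(\beta)$ from starlikeness and then verify the somewhat unexpected comparison $u+d_y\ge d_x+\sqrt{(u-d_x)^2+d_y^2}$. A secondary point needing care is the clean justification that the segments $[x,q]$, $q\in B^n(y,d_y)$, really control the position of $\partial G$ in every direction interior to the cone, and that $d_2$ is monotone on $[0,\alpha]$.
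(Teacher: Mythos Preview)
Your proof is correct and rests on the same two geometric ingredients as the paper --- the ball $B^n(x,d_G(x))$ and the starlike cone $\bigcup_{q\in B^n(y,d_G(y))}[x,q]$ --- though you carry out an explicit angular case split where the paper simply picks the single point $z_1$ on the tangent ray from $x$ to $S^{n-1}(y,d_G(y))$ at distance $d_G(x)$ from $x$ and asserts, without further detail, that $|x-z_1|+|z_1-y|$ minorizes the infimum. In effect, your two cases $\beta\ge\alpha$ and $\beta<\alpha$ provide precisely the verification the paper leaves implicit, and your point $(a,\beta)=(d_x,\alpha)$ is the paper's $z_1$.
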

\begin{proof}
Let $G$ be starlike with respect to $x\in G$ and consider an arbitrary point $y\in G$. Clearly, $B^n(x,d_G(x)),B^n(y,d_G(y))\subset G$. It also follows from the starlikeness of $G$ that the convex hull
$\cup_{u\in B^n(y,d_G(y))}[x,u]$ must belong to $G$. Fix $u,v\in S^{n-1}(y,d_G(y))$, $u\neq v$ on the same plane with the points $x,y$ so that the lines $L(x,u)$ and $L(y,v)$ are tangents of $S^{n-1}(y,d_G(y))$, and fix $z_1\in S^{n-1}(x,d_G(x))\cap[x,u]$.

By the starlikeness of $G$, $\cup_{s\in B^n(y,d_G(y))}[x,s]\subset G$, so it follows that $z_1$ fulfills
\begin{align*}
|x-z_1|+|z_1-y|\leq\inf_{z\in\partial G}(|x-z|+|z-y|)
\quad\Leftrightarrow\quad
s_G(x,y)\leq\frac{|x-y|}{|x-z_1|+|z_1-y|}.
\end{align*}
Here, $|x-z_1|=d_G(x)$ and, with the information that $|u-y|=|y-v|=d_G(y)$ and $\measuredangle XUY=\measuredangle YVX=\pi\slash2$, we can conclude that 
\begin{align*}
|z_1-y|=\sqrt{|x-y|^2+d_G(x)^2-2d_G(x)\sqrt{|x-y|^2-d_G(y)^2}}. 
\end{align*}
Thus, the lemma follows.
\end{proof}

\begin{remark}
The same method as in the proof of Lemma \ref{upperlimfors_starlike} can be also applied into the case where $G$ is convex. In that case, $J=\cup_{s\in B^n(x,d_G(x)),\text{ }t\in B^n(y,d_G(y))}[s,t]\subset G$ for all $x,y\in G$, so
\begin{align*}
s_G(x,y)\leq\frac{|x-y|}{|x-z_1|+|z_1-y|},
\end{align*}
where $z_1$ is chosen from $\partial J$ so that $|x-z_1|+|z_1-y|$ is at minimum. By finding the value of this sum, we end up with the result $s_G(x,y)\leq p_G(x,y)$, which holds by \cite[Lemma 11.6(1), p. 197]{hkvbook}.
\end{remark}

Let us now focus on the Barrlund metric.

\begin{lemma}\label{lem_ine_b.sG}
\emph{\cite[Thm 3.6 p. 7]{fmv}}
For all $x,y\in G\subsetneq\R^n$,
\begin{align*}
s_G(x,y)\leq b_{G,p}(x,y)\leq2^{1-1\slash p}s_G(x,y).   
\end{align*}
\end{lemma}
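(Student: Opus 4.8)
The plan is to reduce everything to a pointwise (in $z$) comparison of the two power sums $|x-z|+|z-y|$ and $(|x-z|^p+|z-y|^p)^{1/p}$, and then to pass to the appropriate supremum and infimum over $z\in\partial G$. Throughout I take $p\ge1$, so that the $\ell^p$-quantities behave monotonically.

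First I would record the elementary two-sided inequality: for all $a,b\ge0$ and all $p\ge1$,
\[
(a^p+b^p)^{1/p}\le a+b\le 2^{1-1/p}\,(a^p+b^p)^{1/p}.
\]
The left-hand inequality is the monotonicity of $t\mapsto(a^t+b^t)^{1/t}$ in $t\ge1$ (equivalently, convexity of $u\mapsto u^p$), and the right-hand one is H\"older's inequality applied to the vectors $(a,b)$ and $(1,1)$ with conjugate exponents $p$ and $p/(p-1)$. Both become equalities when $p=1$, which already gives Lemma \ref{lem_ine_b.sG} with constant $1$ in that case.

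Next, fix $x,y\in G$ and apply the displayed inequality with $a=|x-z|$ and $b=|z-y|$ for an arbitrary $z\in\partial G$. Dividing $|x-y|$ by the three terms (which reverses the inequalities) yields, for every $z\in\partial G$,
\[
\frac{|x-y|}{|x-z|+|z-y|}\;\le\;\frac{|x-y|}{(|x-z|^p+|z-y|^p)^{1/p}}\;\le\;2^{1-1/p}\,\frac{|x-y|}{|x-z|+|z-y|}.
\]
For the lower bound on $b_{G,p}$, I take the supremum over $z\in\partial G$ in the left inequality and use $\sup_{z\in\partial G}\frac{1}{|x-z|+|z-y|}=\frac{1}{\inf_{z\in\partial G}(|x-z|+|z-y|)}$ to conclude $s_G(x,y)\le b_{G,p}(x,y)$. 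For the upper bound, I note that $\frac{|x-y|}{|x-z|+|z-y|}\le s_G(x,y)$ holds for \emph{every} single $z$ (by definition of $s_G$ via the infimum), so the right inequality gives $\frac{|x-y|}{(|x-z|^p+|z-y|^p)^{1/p}}\le 2^{1-1/p}s_G(x,y)$ for every $z$; taking the supremum over $z$ then gives $b_{G,p}(x,y)\le 2^{1-1/p}s_G(x,y)$.

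There is no serious obstacle here: the proof is a packaging of H\"older's inequality. The one place that needs care is the bookkeeping with $\sup$ and $\inf$ — $b_{G,p}$ is defined as a supremum over $z$, whereas $s_G$ carries an infimum over $z$ hidden in its denominator, so one must not accidentally interchange a ``supremum of a quotient'' with a ``quotient whose denominator is an infimum''; handling the two inequalities separately, as above, keeps this straight. One can also observe that the constants are the natural ones: $p=1$ forces equality in both bounds, and for general $p$ the factor $2^{1-1/p}$ is exactly the H\"older constant for two equal entries, which is approached when the extremizing boundary point $z$ satisfies $|x-z|=|z-y|$.
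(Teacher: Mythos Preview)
Your argument is correct and is the standard proof of this inequality. Note that the paper itself does not prove this lemma: it is quoted from \cite[Thm 3.6]{fmv} without proof, so there is nothing in the paper to compare against beyond the citation.
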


\begin{theorem}\label{thm_b2inB2}
\emph{\cite[Thm 3.15 p. 11]{fmv}}
For all $x,y\in\B^2$,
\begin{align*}
b_{\B^2,2}(x,y)=\frac{|x-y|}{\sqrt{2+|x|^2+|y|^2-2|x+y|}}. 
\end{align*}
\end{theorem}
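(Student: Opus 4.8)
The plan is to unwind the definition of the Barrlund metric for $p=2$ and reduce the supremum to a one-line optimization problem on the unit circle. By definition,
\begin{align*}
b_{\B^2,2}(x,y)=\sup_{z\in S^1}\frac{|x-y|}{\sqrt{|x-z|^2+|z-y|^2}},
\end{align*}
so since the numerator is fixed, computing $b_{\B^2,2}(x,y)$ is equivalent to minimizing the function $z\mapsto|x-z|^2+|z-y|^2$ over $z\in S^1=\partial\B^2$. The first step is therefore to expand this function using $|z|=1$:
\begin{align*}
|x-z|^2+|z-y|^2=|x|^2-2\,z\cdot x+1+1-2\,z\cdot y+|y|^2=2+|x|^2+|y|^2-2\,z\cdot(x+y).
\end{align*}

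The second step is to observe that minimizing this expression over $z\in S^1$ is the same as maximizing the linear functional $z\mapsto z\cdot(x+y)$ over the unit circle. When $x+y\neq0$ this maximum equals $|x+y|$, attained uniquely at $z=(x+y)/|x+y|$ (which is consistent with Remark \ref{rmk_onlyonez} in the case $|x|=|y|$); when $x+y=0$ the functional is identically zero, which still equals $|x+y|$. Hence in all cases
\begin{align*}
\inf_{z\in S^1}\bigl(|x-z|^2+|z-y|^2\bigr)=2+|x|^2+|y|^2-2|x+y|,
\end{align*}
and substituting this back yields the claimed formula.

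The only remaining point to check is that the quantity under the square root is strictly positive, so that the expression is well defined; this follows from the triangle inequality $|x+y|\le|x|+|y|$, which gives $2+|x|^2+|y|^2-2|x+y|\ge(1-|x|)^2+(1-|y|)^2>0$ because $|x|,|y|<1$. I do not anticipate a genuine obstacle here: the computation is elementary once one recognizes that the supremum defining $b_{\B^2,2}$ collapses to a linear optimization on the sphere. The only mild subtlety worth a sentence in the write-up is the degenerate case $x+y=0$, where the maximizing $z$ is not unique but the value of the infimum is unaffected.
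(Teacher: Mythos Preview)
Your argument is correct. The expansion $|x-z|^2+|z-y|^2=2+|x|^2+|y|^2-2\,z\cdot(x+y)$ for $|z|=1$ is accurate, the linear optimization over $S^1$ is handled properly (including the degenerate case $x+y=0$), and the positivity check via $(1-|x|)^2+(1-|y|)^2$ is clean.

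Note, however, that the paper does not actually supply a proof of this theorem: it is quoted verbatim from \cite[Thm~3.15]{fmv} and used as a black box. So there is no in-paper proof to compare against. Your computation is in fact the natural one, and it is essentially what one would expect the cited source to contain as well, since for $p=2$ the quantity $|x-z|^p+|z-y|^p$ becomes a quadratic in $z$ and the optimization on the sphere collapses to Cauchy--Schwarz. One small remark: the reference to Remark~\ref{rmk_onlyonez} is a bit tangential, since that remark concerns the minimizer of $|x-z|+|z-y|$ (the $p=1$ case relevant to $s_{\B^2}$), not of $|x-z|^2+|z-y|^2$; the coincidence of the minimizing point $z=(x+y)/|x+y|$ when $|x|=|y|$ is a pleasant observation but not needed for the argument.
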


\begin{lemma}\label{lem_sblund_inB}
For all $x,y\in\B^2$,
\begin{align*}
\frac{1}{\sqrt{2}}b_{\B^2,2}(x,y)\leq s_{\B^2}(x,y)\leq b_{\B^2,2}(x,y).    
\end{align*}
Furthermore, this inequality is sharp. 
\end{lemma}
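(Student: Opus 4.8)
The plan is to read the two-sided inequality straight off Lemma~\ref{lem_ine_b.sG}, and then to verify sharpness with a single one-parameter family of point pairs that simultaneously witnesses both extremal constants.

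For the inequality itself I would specialise Lemma~\ref{lem_ine_b.sG} to $p=2$ and $G=\B^2$. This yields $s_{\B^2}(x,y)\le b_{\B^2,2}(x,y)\le 2^{1-1/2}s_{\B^2}(x,y)=\sqrt{2}\,s_{\B^2}(x,y)$ for all $x,y\in\B^2$, and dividing the right-hand estimate by $\sqrt2$ rearranges the chain into $\tfrac{1}{\sqrt2}\,b_{\B^2,2}(x,y)\le s_{\B^2}(x,y)\le b_{\B^2,2}(x,y)$. So the displayed inequality needs nothing beyond the cited result.

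The real work is sharpness, and for this I would test the antipodal pair $x=r$, $y=-r$ with $r\in(0,1)$. Since these points are collinear with the origin, Lemma~\ref{lem_smetricinB_collinear} gives $s_{\B^2}(x,y)=\tfrac{2r}{2-|x+y|}=r$, while Theorem~\ref{thm_b2inB2} gives $b_{\B^2,2}(x,y)=\tfrac{2r}{\sqrt{2+2r^2}}$. Hence $s_{\B^2}(x,y)/b_{\B^2,2}(x,y)=\sqrt{1+r^2}/\sqrt2$, which tends to $1/\sqrt2$ as $r\to0^+$ and to $1$ as $r\to1^-$; the first limit shows that the constant $1/\sqrt2$ in the lower bound cannot be enlarged, and the second shows that the constant $1$ in the upper bound cannot be shrunk, so both inequalities are best possible. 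There is no serious obstacle here — the estimates are immediate and the sharpness computation is routine; the one thing worth noticing is that letting an origin-symmetric pair shrink to the centre exposes the lower constant while letting it expand towards $S^1$ exposes the upper one, so a single family does the whole job.
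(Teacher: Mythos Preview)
Your proof is correct and follows essentially the same approach as the paper: the inequality is read off Lemma~\ref{lem_ine_b.sG} with $p=2$, and sharpness is established via a one-parameter family of collinear pairs computed using Lemma~\ref{lem_smetricinB_collinear} and Theorem~\ref{thm_b2inB2}. The only difference is the choice of witness family --- the paper uses $x=0$, $y=k$ instead of your antipodal pair $x=r$, $y=-r$ --- but both families hit the same limiting ratios $1/\sqrt2$ and $1$.
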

\begin{proof}
The inequality follows from Lemma \ref{lem_ine_b.sG}. Let $x=0$ and $y=k$ with $0<k<1$. By Lemma \ref{lem_smetricinB_collinear} and Theorem \ref{thm_b2inB2},
\begin{align*}
s_{\B^2}(x,y)=\frac{k}{2-k}    
\quad\text{and}\quad
b_{\B^2,2}(x,y)=\frac{k}{\sqrt{2+k^2-2k}},    
\end{align*}
so we will have the following limit values
\begin{align*}
\lim_{k\to0^+}\frac{s_{\B^2}(x,y)}{b_{\B^2,2}(x,y)}
=\lim_{k\to0^+}\left(\frac{\sqrt{2+k^2-2k}}{2-k}\right)
=\frac{1}{\sqrt{2}}
\quad\text{and}\quad
\lim_{k\to1^-}\frac{s_{\B^2}(x,y)}{b_{\B^2,2}(x,y)}
=1. 
\end{align*}
Thus, the sharpness follows.
\end{proof}

Let us next study the connection between the Barrlund metric and two other hyperbolic type metrics that can used to bound the value of the triangular ratio metric in the unit disk, see Theorem \ref{rhojsp_inB}.

\begin{theorem}
For all $x,y\in\B^2$, the sharp inequality
\begin{align*}
\frac{1}{2}b_{\B^2,2}(x,y)\leq j^*_{\B^2}(x,y)\leq b_{\B^2,2}(x,y) \end{align*}
holds.
\end{theorem}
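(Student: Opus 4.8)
The plan is to prove the two inequalities separately, using the explicit formula for $b_{\B^2,2}$ from Theorem \ref{thm_b2inB2} and the definition of $j^*_{\B^2}$, and then to exhibit a one-parameter family of point pairs attaining each bound in the limit. Writing $b = b_{\B^2,2}(x,y)$ and $j = j^*_{\B^2}(x,y)$, and noting that both metrics share the numerator $|x-y|$, the inequality $\tfrac12 b \le j \le b$ is equivalent to the pair of denominator inequalities
\begin{align*}
2+|x|^2+|y|^2-2|x+y| \;\le\; \bigl(|x-y|+2\min\{d_{\B^2}(x),d_{\B^2}(y)\}\bigr)^2 \;\le\; 4\bigl(2+|x|^2+|y|^2-2|x+y|\bigr).
\end{align*}
Since $d_{\B^2}(x)=1-|x|$ and $d_{\B^2}(y)=1-|y|$, the middle expression is, up to symmetry, $(|x-y|+2(1-\max\{|x|,|y|\}))^2$. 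First I would reduce to the case $|y|\le|x|$ and set $a=|x|$, $b=|y|$ (abusing notation), $|x-y|=t$, $|x+y|=u$, noting the constraints $|a-b|\le t,u \le a+b$ coming from the triangle inequality applied to $x,y,0$, together with the parallelogram law $t^2+u^2 = 2a^2+2b^2$.

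\textbf{Upper bound $j^*_{\B^2}\le b_{\B^2,2}$.} This is the easier half, since it amounts to showing
\begin{align*}
2+a^2+b^2-2u \;\le\; \bigl(t+2(1-a)\bigr)^2 = t^2 + 4t(1-a) + 4(1-a)^2.
\end{align*}
Using $t^2 = 2a^2+2b^2-u^2$ and expanding, after simplification this should reduce to an inequality that is manifestly true given $u\le a+b$ and $t\ge a-b$; I expect the extra nonnegative slack to come precisely from the term $4t(1-a)\ge 4(a-b)(1-a)\ge 0$ combined with $-u^2+2u \le$ something controlled by $u\le a+b<2$. The intended extremal configuration is $x,y$ collinear with the origin on the same side, e.g. $x=0$, $y=k\to 0^+$, where one checks $j^*_{\B^2}(0,k)=k/(2+k)$ and $b_{\B^2,2}(0,k)=k/\sqrt{2+k^2-2k}$, whose ratio tends to $1/\sqrt2$... wait — that is the wrong endpoint; the ratio $j/b$ here tends to $1/\sqrt2 < 1$, so equality in the upper bound is approached elsewhere, presumably as $a=b\to 1$ with $x,y$ nearly antipodal, which I would verify gives $j/b\to 1$.

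\textbf{Lower bound $\tfrac12 b_{\B^2,2}\le j^*_{\B^2}$.} Here I must show
\begin{align*}
\bigl(t+2(1-a)\bigr)^2 \;\le\; 4\bigl(2+a^2+b^2-2u\bigr) = 8 + 4a^2+4b^2-8u.
\end{align*}
Again substituting $t^2=2a^2+2b^2-u^2$ and expanding the left side gives $2a^2+2b^2-u^2+4t(1-a)+4(1-a)^2$, so the claim becomes $4t(1-a) + 4(1-a)^2 \le 8+2a^2+2b^2+u^2-8u = 8-8u+u^2 + 2a^2+2b^2 = (u-2)^2 + 2(a^2+b^2)$... I would then bound $t\le a+b\le 2a$ (using $b\le a$) to get $4t(1-a)\le 8a(1-a)$, and separately $4(1-a)^2 = 4-8a+4a^2$, and check the resulting polynomial inequality in $a,b,u$ holds on the constraint region; the slack should degenerate exactly when $x=y$ approaches the boundary (then both sides of the original are $0/0$, so I would instead look at $x\to y$ with $y$ fixed, or rescale). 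The sharpness family for this side is again collinear points: taking $x=0$, $y=k$, the ratio $j/b = \sqrt{2+k^2-2k}/(2+k) \to 1/\sqrt2$ as $k\to 0^+$, showing the constant $1/2$ in $\tfrac12 b$ cannot be replaced by anything larger than... no — $\tfrac12 b \le j$ rearranges to $j/b \ge 1/2$, and $1/\sqrt2 > 1/2$, so this family does not show sharpness of the lower bound either.

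\textbf{Main obstacle.} The genuine difficulty — and the part I would spend the most care on — is identifying the correct extremal sequences for \emph{both} bounds, since the naive collinear examples give ratio $1/\sqrt2$, strictly between $1/2$ and $1$, hence prove sharpness of \emph{neither} endpoint. I would search among: (i) $x,y$ collinear with the origin but on opposite sides and near the boundary (for the upper bound, aiming at $j/b\to 1$); and (ii) for the lower bound, point pairs where $\min\{d_{\B^2}(x),d_{\B^2}(y)\}$ is forced to be large relative to $|x-y|$ while $|x+y|$ is also large — e.g. $|x|=|y|\to 1$ with a small fixed hyperbolic-type separation, or $x\to 0$ and $y\to 0$ along a configuration making $|x+y|$ as small as the constraints allow. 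Once the two inequalities are established as polynomial inequalities on the region cut out by $|a-b|\le t,u\le a+b$ and $t^2+u^2=2a^2+2b^2$, the remaining work is routine: reduce to a single free parameter using the constraint, differentiate, and check boundary cases; I do not anticipate that the algebra itself will be the bottleneck.
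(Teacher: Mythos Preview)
The paper takes a much shorter route to the two inequalities than your direct algebra: it simply chains known bounds through the triangular ratio metric. From Theorem~\ref{rhojsp_inB} and Lemma~\ref{lem_sblund_inB} one has $j^*_{\B^2}\le s_{\B^2}\le b_{\B^2,2}$, which gives the upper bound; and combining Lemma~\ref{lem_sblund_inB} with \cite[Thm~2.9(1)]{hvz} (which provides $s_G\le\sqrt2\,j^*_G$) yields $\tfrac{1}{\sqrt2}b_{\B^2,2}\le s_{\B^2}\le\sqrt2\,j^*_{\B^2}$, hence the lower bound. Your polynomial-inequality approach is in principle workable but much heavier, and several of the ``should reduce to'' steps in your upper-bound sketch are not yet arguments. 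For sharpness of the upper bound, the paper uses $x=-k$, $y=k$, $k\to1^-$, which is exactly the antipodal family you anticipated.

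Your instinct that the lower-bound sharpness is the genuine difficulty is well founded --- and in fact the paper's own proof of this point is flawed. The paper writes the quotient as
\[
\frac{j^*_{\B^2}(x,y)}{b_{\B^2,2}(x,y)}=\frac{\sqrt{2+|x|^2+|y|^2-2|x+y|}}{|x-y|+2-|x|-|y|},
\]
but the $j^*$ denominator is $|x-y|+2(1-\max\{|x|,|y|\})$, not $|x-y|+2-|x|-|y|$. With the correct formula the paper's claimed extremal $x=0$, $y=k\to1^-$ gives ratio $\sqrt{2+k^2-2k}/(2-k)\to1$, not $1/2$. A family that does realise the constant $\tfrac12$ is precisely your ``$|x|=|y|\to1$ with a fixed hyperbolic separation'' suggestion: take $x=k e^{i(1-k)}$, $y=\overline{x}$, $k\to1^-$. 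Then $|x-y|=2k\sin(1-k)\sim2(1-k)$, $1-\max\{|x|,|y|\}=1-k$, and $2+2k^2-4k\cos(1-k)\sim4(1-k)^2$, so the quotient tends to $2(1-k)/(4(1-k))=\tfrac12$. One checks that $\rho_{\B^2}(x,y)\to2\log(1+\sqrt2)$ along this family, so the hyperbolic separation is indeed fixed and nonzero.
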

\begin{proof}
The inequality follows from Lemma \ref{lem_sblund_inB}, Theorem \ref{rhojsp_inB} and \cite[Thm 2.9(1), p. 1129]{hvz}. By Theorem \ref{thm_b2inB2},
\begin{align*}
\frac{j^*_{\B^2}(x,y)}{b_{\B^2,2}(x,y)}
=\frac{\sqrt{2+|x|^2+|y|^2-2|x+y|}}{|x-y|+2-|x|-|y|}. 
\end{align*}
For $x=0$ and $y=k$ with $0<k<1$,
\begin{align*}
\lim_{k\to1^-}\frac{j^*_{\B^2}(x,y)}{b_{\B^2,2}(x,y)}
=\lim_{k\to1^-}\left(\frac{\sqrt{2+k^2-2k}}{2}\right)
=\frac{1}{2}. 
\end{align*}
and, for $x=-k$ and $y=k$ with $0<k<1$,
\begin{align*}
\lim_{k\to1^-}\frac{j^*_{\B^2}(x,y)}{b_{\B^2,2}(x,y)}
=\lim_{k\to1^-}\left(\sqrt{\frac{1+k^2}{2}}\right)
=1. 
\end{align*}
Thus, the sharpness follows.
\end{proof}

\begin{theorem}
For all $x,y\in\B^2$, the sharp inequality
\begin{align*}
\frac{1}{\sqrt{2}}b_{\B^2,2}(x,y)\leq p_{\B^2}(x,y)\leq \frac{\sqrt{10}+\sqrt{2}}{4}b_{\B^2,2}(x,y)
\end{align*}
holds.
\end{theorem}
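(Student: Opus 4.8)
The plan is to follow the same template as the preceding two theorems: combine known inequalities to get the two-sided bound, then verify sharpness by testing extremal configurations along the diameter. For the inequality itself, I would write $p_{\B^2}(x,y)$ in terms of $b_{\B^2,2}(x,y)$ using Theorem \ref{thm_b2inB2}, obtaining
\begin{align*}
\frac{p_{\B^2}(x,y)}{b_{\B^2,2}(x,y)}
=\frac{\sqrt{2+|x|^2+|y|^2-2|x+y|}}{\sqrt{|x-y|^2+4d_{\B^2}(x)d_{\B^2}(y)}}
=\frac{\sqrt{2+|x|^2+|y|^2-2|x+y|}}{\sqrt{|x-y|^2+4(1-|x|)(1-|y|)}}.
\end{align*}
The lower bound $\frac{1}{\sqrt2}b_{\B^2,2}\le p_{\B^2}$ should drop out of Lemma \ref{lem_sblund_inB} together with the chain $s_{\B^2}\le p_{\B^2}$ from Theorem \ref{rhojsp_inB}; indeed $p_{\B^2}\ge s_{\B^2}\ge\frac{1}{\sqrt2}b_{\B^2,2}$.

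For the upper bound I would try to show that the ratio $R(x,y):=p_{\B^2}(x,y)^2/b_{\B^2,2}(x,y)^2$ is maximized, over all $x,y\in\B^2$, in a configuration that can be reduced to one real parameter. The natural reduction is to argue that for fixed $|x|,|y|$ and fixed $|x-y|$, the quantity $|x+y|$ should be as small as possible to maximize the numerator $2+|x|^2+|y|^2-2|x+y|$ while the denominator only involves $|x|,|y|,|x-y|$; hence the extremal points are antipodally arranged, i.e. on a diameter with $x$ and $y$ on opposite sides of the origin. Writing $x=-a$, $y=b$ with $0\le a,b<1$ (say $x=-a$, $y=b$ along the real axis), the ratio becomes an explicit function of $(a,b)$, and the claimed constant $\frac{\sqrt{10}+\sqrt2}{4}$ (note $\left(\frac{\sqrt{10}+\sqrt2}{4}\right)^2=\frac{12+2\sqrt{20}}{16}=\frac{3+\sqrt5}{4}$) should appear as the supremum, presumably attained in a limit $a\to1$ or $b\to1$ with the other parameter at an interior critical point. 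I would locate that critical point by elementary single-variable calculus after sending one parameter to the boundary.

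For sharpness of both constants I would exhibit explicit one-parameter families and compute limits, exactly as in the proof of the previous theorem. For the lower constant $1/\sqrt2$, the diameter case $x=0$, $y=k$ with $k\to0^+$ gives $p_{\B^2}(x,y)/b_{\B^2,2}(x,y)=\sqrt{2+k^2-2k}/\sqrt{k^2+4(1-k)}\to\sqrt2/2$ (using $s=p$ when $y=0$, or directly from the formula for $p_{\B^2}$), showing $1/\sqrt2$ cannot be increased. For the upper constant, I would use the family realizing the supremum found above — e.g. $x=-k$, $y=tk$ (or $x=-k$, $y$ fixed) with $k\to1^-$ and $t$ chosen to be the critical value — and check that the limit of the ratio equals $\frac{\sqrt{10}+\sqrt2}{4}$.

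The main obstacle is the reduction step for the upper bound: justifying rigorously that the supremum of $p_{\B^2}/b_{\B^2,2}$ is attained in the antipodal/diametral configuration. The subtlety is that $|x-y|$, $|x+y|$, $|x|$, $|y|$ are not independent — they satisfy $|x-y|^2+|x+y|^2=2|x|^2+2|y|^2$ — so one cannot freely shrink $|x+y|$ while holding everything else fixed. I expect the clean way is to fix $|x|=r$ and $|y|=\rho$ and view the free parameter as the angle $\theta$ between $x$ and $y$; then both $|x-y|^2=r^2+\rho^2-2r\rho\cos\theta$ and $|x+y|^2=r^2+\rho^2+2r\rho\cos\theta$ are monotone in $\cos\theta$ in opposite directions, and a short monotonicity argument shows $R$ is decreasing in $\cos\theta$, so the max is at $\cos\theta=-1$, i.e. the diametral case. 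After that, the remaining two-variable maximization over $(r,\rho)\in[0,1)^2$ is routine calculus, and I would finish by identifying the critical point and evaluating the constant.
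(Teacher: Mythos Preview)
Your plan matches the paper's proof almost exactly: write the ratio explicitly, get the lower bound from $p_{\B^2}\ge s_{\B^2}\ge\frac{1}{\sqrt2}b_{\B^2,2}$ with sharpness at $x=0,\ y=k\to0^+$, and for the upper bound fix $|x|,|y|$ and show monotonicity in $\cos\theta$ to reduce to the antipodal case $\cos\theta=-1$. One correction: your guess that the supremum occurs as a boundary limit is wrong---the paper shows, after setting $\mu=\pi$ (so $x=h$, $y=-j$ with $0\le h\le j<1$), that the ratio is \emph{decreasing} in $q=j-h$, hence maximal at $j=h$; the resulting one-variable function $f(h)=(1+h^2)/(4h^2-4h+2)$ has its maximum at the interior point $h=(\sqrt5-1)/2$, where the ratio equals $(\sqrt{10}+\sqrt2)/4$ exactly, so the upper constant is attained, not merely approached.
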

\begin{proof}
Consider now the quotient
\begin{align}\label{pblund_quo}
\frac{p_{\B^2}(x,y)}{b_{\B^2,2}(x,y)}=\sqrt{\frac{2+|x|^2+|y|^2-2|x+y|}{|x-y|^2+4(1-|x|)(1-|y|)}}.    
\end{align}
By Lemma \ref{lem_sblund_inB} and \cite[11.16(1), p. 203]{hkvbook}, $b_{\B^2,2}(x,y)\slash\sqrt{2}\leq s_{\B^2}(x,y)\leq p_{\B^2}(x,y)$ holds for all $x,y\in\B^2$. This inequality is sharp, because, for $x=0$ and $y=k$,
\begin{align*}
\lim_{k\to0^+}\frac{p_{\B^2}(x,y)}{b_{\B^2,2}(x,y)}
=\lim_{k\to0^+}\left(\frac{\sqrt{k^2+2k+2}}{2-k}\right)
=\frac{1}{\sqrt{2}}. 
\end{align*}
Without loss of generality, fix $x=h$ and $y=je^{\mu i}$ with $0\leq h\leq j<1$ and $0<\mu<2\pi$. The quotient \eqref{pblund_quo} is now
\begin{align*}
\frac{p_{\B^2}(x,y)}{b_{\B^2,2}(x,y)}=\sqrt{\frac{2+h^2+j^2-2\sqrt{h^2+j^2+2hj\cos(\mu)}}{h^2+j^2-2hj\cos(\mu)+4(1-h)(1-j)}}.    
\end{align*}
This is decreasing with respect to $\cos(\mu)$, so we can assume that $\mu=\pi$ and $\cos(\mu)=-1$, when looking for the maximum of this quotient. It follows that
\begin{align*}
\frac{p_{\B^2}(x,y)}{b_{\B^2,2}(x,y)}=\sqrt{\frac{(1+h)^2+(1-j)^2}{(h+j)^2+4(1-h)(1-j)}}
=\sqrt{\frac{(1+h)^2+(1-h-q)^2}{(2h+q)^2+4(1-h)(1-h-q)}},
\end{align*}
where $q=j-h\geq0$. The quotient above is clearly decreasing with respect to $q$. Thus, let us fix $j=h$. It follows that
\begin{align*}
\frac{p_{\B^2}(x,y)}{b_{\B^2,2}(x,y)}=\sqrt{\frac{2+2h^2}{8h^2-8h+4}}=\sqrt{\frac{1+h^2}{4h^2-4h+2}}\equiv\sqrt{f(h)},   
\end{align*}
where $f:[0,1)\to\R$, $f(h)=(1+h^2)\slash(4h^2-4h+2)$. By differentiation, for $0\leq h <1$, 
\begin{align*}
f'(h)=\frac{\partial}{\partial h}\left(\frac{1+h^2}{4h^2-4h+2}\right)=\frac{-(h^2+h-1)}{(2h^2-2h+1)^2}=0\quad\Leftrightarrow\quad
h=\frac{\sqrt{5}-1}{2}.
\end{align*}
Since $f(0.1)>1$ and $f(0.9)<0$, the quotient \eqref{pblund_quo} has a maximum $\sqrt{f((\sqrt{5}-1)\slash2)}=(\sqrt{10}+\sqrt{2})\slash4$ and the other part of the theorem follows.
\end{proof}

Finally, we will introduce one special function defined in the punctured unit disk.

\begin{definition}
For $x,y\in\B^2\backslash\{0\}$, define
\begin{align*}
\text{low}(x,y)=\frac{|x-y|}{\min\{|x-y^*|,|x^*-y|\}},  \end{align*}
where $x^*=x\slash|x|^2$ and $y^*=y\slash|y|^2$.
\end{definition}

\begin{remark}
The low-function is not a metric on the punctured unit disk: By choosing points $x=0.3$, $y=-0.1$ and $z=0.1$, we will have
\begin{align*}
0.117\approx{\rm low}(x,y)>{\rm low}(x,z)+{\rm low}(z,y)\approx0.0817,    
\end{align*}
so the triangle inequality does not hold. 

Furthermore, because $A[x,y]=|x||y-x^*|$ for $x,y\in\B^n\setminus\{0\}$, it follows that
\begin{equation} \label{lowandrho}
{\rm th} \frac{\rho_{\B^2}(x,y)}{2}=\frac{|x-y|}{ |x||y- x^*|}\geq{\rm low}(x,y),
\end{equation}
see \cite[7.44(20)]{avv}. Note also that, by \cite[7.42(1)]{avv}, the left hand side of \eqref{lowandrho} defines a metric.
\end{remark}

This low-function is a suitable lower bound for the triangular ratio metric, as the next theorem states.

\begin{lemma}\label{lem_slow}
For all $x,y\in\B^2\backslash\{0\}$, the inequality $s_{\B^2}(x,y)\geq{\rm low}(x,y)$ holds.
\end{lemma}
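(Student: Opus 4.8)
The plan is to exploit the characterization of the minimizing boundary point given by Theorem~\ref{find_zinB}: the point $z\in S^1$ realizing $\inf_{z\in S^1}(|x-z|+|z-y|)$ is the one at which the radius $[0,z]$ bisects the angle $\measuredangle XZY$. So it suffices to produce, for the specific candidate points $z_1=x^*/|x^*|=x/|x|$ and $z_2=y/|y|$, a \emph{lower} bound on $|x-z|+|z-y|$ valid for every $z\in S^1$, and then observe that at one of $z_1,z_2$ this lower bound is attained in the form $\min\{|x-y^*|,|x^*-y|\}$. Concretely, I would first record the elementary identity for a point $w$ on the unit circle and a point $a\in\B^2$: since $|w|=1$, one has $|w-a|=|w||w-a| = |1-\overline{w}a|$ and also $|w-a|=|a|\,|w-a^*|$ where $a^*=a/|a|^2$ (this is the standard inversion identity, the same one quoted in \eqref{lowandrho} via the Ahlfors bracket). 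Using the latter with $a=x$ gives $|z-x|=|x|\,|z-x^*|$ for all $z\in S^1$, and similarly $|z-y|=|y|\,|z-y^*|$.

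Next I would combine these to estimate the denominator $|x-z|+|z-y|$ from below. Write $|x-z|+|z-y| = |x|\,|z-x^*| + |z-y|$; since $|x|\le 1$ this is $\ge |x|\,(|z-x^*|+|z-y|) \ge |x|\,|x^*-y|$ by the triangle inequality — but that loses a factor $|x|$, which is not good enough. The right move is instead to keep one term exact and blow up only the other: $|x-z|+|z-y| \ge |x-z| + |y|\,|z-y^*|$... the cleaner approach is to note $|x-z|+|z-y|\ge |x-y^*|$ is \emph{false} in general, so one really must use that $z$ is the bisecting point. So here is the corrected core step: let $z_0$ be the minimizing point from Theorem~\ref{find_zinB}. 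By the bisector property the ray from $z_0$ through the origin makes equal angles with $[z_0,x]$ and $[z_0,y]$. Reflect: the point $x^*=x/|x|^2$ lies on the ray from $0$ through $x$, hence $\measuredangle(x,z_0,0)=\measuredangle(x^*,z_0,0)$ as well (same ray from $z_0$? no) — so instead use the key geometric fact that for $z_0\in S^1$, the points $0$, $z_0$ and the inversive image are arranged so that $z_0$, $x$, $x^*$ are not collinear but $|z_0-x|=|x|\,|z_0-x^*|$ places $x^*$ on a circle. The honest statement: by Theorem~\ref{find_zinB}, $[0,z_0]$ bisects $\measuredangle X Z_0 Y$, and since $x^*\in[0,x)$-extended lies on the line $L(0,x)$ on the far side, the segment $[z_0,x^*]$ is the reflection of part of $[z_0,x]$ across $L(0,z_0)$; by this reflection symmetry $|x-z_0|+|z_0-y|$ is minimized exactly when $x^*$, $z_0$, $y$ become collinear, i.e. $\min_{z}(|x-z|+|z-y|)\le |x^*-y|$ whenever the segment $[x^*,y]$ meets $S^1$. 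Arguing symmetrically with $y^*$ gives $\min_z(|x-z|+|z-y|)\le\min\{|x^*-y|,|x-y^*|\}$, and this inequality on the denominator yields $s_{\B^2}(x,y)\ge|x-y|/\min\{|x-y^*|,|x^*-y|\}={\rm low}(x,y)$.

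The main obstacle is making the reflection argument watertight: one must check that for $x,y\in\B^2\setminus\{0\}$ at least one of the segments $[x^*,y]$, $[x,y^*]$ genuinely intersects the unit circle $S^1$, so that the claimed boundary point actually exists, and that the reflection across the diameter $L(0,z_0)$ sends $[z_0,x]$ into a subsegment of $[z_0,x^*]$ (equivalently, that $x^*$ lies on the correct side). A clean way to sidestep the case analysis is to bound $\inf_{z\in\partial\B^2}(|x-z|+|z-y|)\le \inf_{z\in L(x^*,y)\cap S^1}(\dots)$ only after verifying the intersection is nonempty; since $|x^*|=1/|x|>1>|y|$, the segment $[x^*,y]$ connects a point outside the closed disk to a point inside, hence crosses $S^1$, so the infimum over $\partial\B^2=S^1$ is at most the value at that crossing point $w$, and there $|x-w|\le |x|\,|w-x^*|$... wait, at the crossing point $w\in[x^*,y]$ we have $|w-x^*|+|w-y|=|x^*-y|$, and using $|x-w|=|x|\,|w-x^*|\le |w-x^*|$ gives $|x-w|+|w-y|\le |w-x^*|+|w-y|=|x^*-y|$. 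That is exactly what is needed, with no bisector machinery at all. So the proof reduces to: (i) the inversion identity $|z-x|=|x|\,|z-x^*|$ for $z\in S^1$; (ii) $[x^*,y]\cap S^1\ne\emptyset$ since $|x^*|>1>|y|$; (iii) evaluating the sum at the crossing point; (iv) symmetry in $x\leftrightarrow y$; (v) taking the minimum and inverting the fraction. I expect step (i) to be the one to state carefully (it is \cite[7.44(20)]{avv} in disguise), and everything else is short.
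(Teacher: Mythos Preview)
Your final argument (items (i)--(v)) is correct and is essentially the paper's proof: pick a crossing point of the segment from one point to the inverse of the other with $S^1$, and use that at a boundary point the inverse is farther away than the original. The paper assumes without loss of generality that $|x-y^*|\le|x^*-y|$, chooses $z_1\in[x,y^*]\cap S^1$, and uses $|z_1-y|\le|z_1-y^*|$; you choose $w\in[x^*,y]\cap S^1$ and use $|x-w|=|x|\,|w-x^*|\le|w-x^*|$, which is the same step via the explicit inversion identity (and arguably cleaner than the paper's appeal to $d(y,S^1)<d(y^*,S^1)$).
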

\begin{proof}
Suppose that $|x-y^*|\leq|x^*-y|$ and fix $z_1\in[x,y^*]\cap S^1$. Clearly, 
\begin{align*}
&d(y,S^1)<d(y^*,S^1)\quad\Leftrightarrow\quad
1-|y|<|y^*|-1=\frac{1}{|y|}-1\\
&\Leftrightarrow\quad|y|-2+\frac{1}{|y|}=\frac{1}{|y|}(|y|-1)^2>0.
\end{align*}
It follows from this that
\begin{align*}
s_{\B^2}(x,y)\geq\frac{|x-y|}{|x-z_1|+|z_1-y|}\geq\frac{|x-y|}{|x-z_1|+|z_1-y^*|}=\frac{|x-y|}{|x-y^*|}=\text{low}(x,y).   
\end{align*}
\end{proof}

As a lower bound for $s_{\B^2}(x,y)$, the low-function is essentially sharp, when $\max\{|x|,|y|\}\to1$. However, the low-function does not give any useful upper limits for the triangular ratio metric, unless we limit from below the absolute value of the points inspected. This can be seen our next theorem.

\begin{theorem}
For all $x,y\in\B^2\backslash\{0\}$, the triangular ratio metric and its lower bound fulfill
\begin{align*}
\sup\{\frac{s_{\B^2}(x,y)}{{\rm low}(x,y)}\text{ }|\text{ }\max\{|x|,|y|\}\geq r\}\leq\frac{1+r}{2r}, 
\end{align*}
where the equality holds if $\max\{|x|,|y|\}=r$.
\end{theorem}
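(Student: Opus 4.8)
The plan is to combine the lower bound $\mathrm{low}(x,y)$ from Lemma~\ref{lem_slow} with a sharp upper bound for $s_{\B^2}(x,y)$ in terms of the same quantity $\min\{|x-y^*|,|x^*-y|\}$, valid whenever $\max\{|x|,|y|\}\geq r$. Without loss of generality assume $|x|\leq|y|$, so that $|y|\geq r$ (this uses that the hypothesis $\max\{|x|,|y|\}\geq r$ is symmetric). The key idea is that the triangular ratio distance is bounded above by $|x-y|/(2-|x+y|)$ for collinear points (Lemma~\ref{lem_smetricinB_collinear}) and, more generally, $s_{\B^2}(x,y)\leq p_{\B^2}(x,y)$ by Theorem~\ref{rhojsp_inB}; but the cleanest route here is to bound $s_{\B^2}(x,y)$ directly by $\mathrm{th}(\rho_{\B^2}(x,y)/2)$ from Theorem~\ref{rhojsp_inB} and then relate $\mathrm{th}(\rho_{\B^2}(x,y)/2)$ to $\mathrm{low}(x,y)$ via the identity \eqref{lowandrho}.

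Concretely, first I would recall from the Remark preceding Lemma~\ref{lem_slow} that
\begin{align*}
\mathrm{th}\frac{\rho_{\B^2}(x,y)}{2}=\frac{|x-y|}{|x|\,|y-x^*|}=\frac{|x-y|}{|y|\,|x-y^*|},
\end{align*}
the second equality following by the symmetry $A[x,y]=A[y,x]$. Hence, assuming $|x|\leq|y|$ and using $\min\{|x-y^*|,|x^*-y|\}\leq |x-y^*|$, together with $s_{\B^2}(x,y)\leq\mathrm{th}(\rho_{\B^2}(x,y)/2)$, I get
\begin{align*}
\frac{s_{\B^2}(x,y)}{\mathrm{low}(x,y)}\leq\frac{\mathrm{th}(\rho_{\B^2}(x,y)/2)}{|x-y|/|x-y^*|}=\frac{|x-y^*|}{|y|\,|x-y^*|}=\frac{1}{|y|}.
\end{align*}
This is too weak; the bound $(1+r)/(2r)$ is strictly smaller than $1/r$. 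So the refinement needed is to use a better upper bound for $s_{\B^2}$, namely one that exploits that $z_1$ from the proof of Lemma~\ref{lem_slow} lies on the sphere. I would instead bound $s_{\B^2}(x,y)\leq |x-y|/(|x-z_1|+|z_1-y|)$ with $z_1\in[x,y^*]\cap S^1$ and estimate the denominator from below: writing $|x-y^*|=|x-z_1|+|z_1-y^*|$ and $|z_1-y^*|=|z_1-y|+(|y^*|-|y|)$ is false in general, so instead I would use the triangle inequality $|z_1-y|\geq |z_1-y^*|-|y-y^*|=|z_1-y^*|-(1/|y|-|y|)$, giving $|x-z_1|+|z_1-y|\geq |x-y^*|-(1/|y|-|y|)$. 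Combined with $|x-y^*|\geq |x-y|\cdot(\text{something})$—here I would use $|x-y^*|\geq \mathrm{low}(x,y)^{-1}|x-y|$ by definition—and the elementary bound $|x-y|\geq |y^*|-|y|\cdot(\dots)$, a short optimization in $|y|\in[r,1)$ should yield the constant $(1+|y|)/(2|y|)$, which is decreasing in $|y|$ and hence maximized at $|y|=r$.

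The main obstacle is carrying out this last estimate cleanly: one must show $|x-z_1|+|z_1-y|\geq \frac{2|y|}{1+|y|}\,|x-y^*|$, equivalently that subtracting $|y-y^*|=1/|y|-|y|=(1-|y|^2)/|y|$ from $|x-y^*|$ costs at most a factor $2|y|/(1+|y|)$; this requires a lower bound on $|x-y^*|$ itself, which one gets from $|x-y^*|\geq |y^*|-|x|\geq 1/|y|-1=(1-|y|)/|y|$ (using $|x|\leq|y|<1$). Then $|x-y^*|-(1-|y|^2)/|y|\geq |x-y^*| - (1+|y|)\cdot\frac{1-|y|}{|y|}\geq |x-y^*|-(1+|y|)|x-y^*| $ is again too lossy, so the correct bookkeeping is to keep $|x-y^*|$ as a free variable $D\geq (1-|y|)/|y|$ and minimize $(D-(1-|y|^2)/|y|)/D = 1-(1-|y|^2)/(|y|D)$ over admissible $D$; this is increasing in $D$, so its minimum is at $D=(1-|y|)/|y|$, giving $1-(1+|y|)=-|y|<0$, which is absurd—meaning the naive triangle-inequality step is too crude and one must instead locate $z_1$ more carefully, using Theorem~\ref{find_zinB} (the angle bisector property) or a direct coordinate computation placing $y$ on the positive real axis and $x$ in the unit disk. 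I expect the honest proof fixes $y=|y|$ real, parametrizes $x$, writes out $|x-z_1|+|z_1-y|$ explicitly via the chord through $x$ and $y^*=1/|y|$, and reduces the inequality to a single-variable calculus problem whose maximum over the configuration, at the boundary $\max\{|x|,|y|\}=r$, equals $(1+r)/(2r)$; the equality case will come from taking $x\to$ a point of $S^1$ with $|y|=r$ (or $x$ and $y$ collinear with the origin at distances approaching the extremal configuration), matching the sharpness claim in the statement.
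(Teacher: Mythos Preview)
Your proposal does not reach a complete proof, and the central second attempt contains a direction error. Choosing a specific test point $z_1\in[x,y^*]\cap S^1$ gives $|x-z_1|+|z_1-y|\geq\inf_{z\in S^1}(|x-z|+|z-y|)$, hence
\[
\frac{|x-y|}{|x-z_1|+|z_1-y|}\;\leq\; s_{\B^2}(x,y),
\]
which is precisely Lemma~\ref{lem_slow} again, not an upper bound for $s_{\B^2}$. Your subsequent estimate $|x-z_1|+|z_1-y|\geq|x-y^*|-(1/|y|-|y|)$ therefore bounds from below a quantity that already dominates the infimum, and says nothing about $s_{\B^2}(x,y)/{\rm low}(x,y)$ from above. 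You notice the approach collapses and close with a vague plan (``fix $y=|y|$ real, parametrize $x$, \dots''), but the decisive computation is never performed.

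The paper supplies exactly the idea you are missing. After writing the quotient as $\min\{|x-y^*|,|x^*-y|\}\big/\inf_z(|x-z|+|z-y|)$ and assuming $0<|x|\leq|y|$, it works at the \emph{optimal} $z$ and introduces the central angles $k_0=\measuredangle ZOX$, $k_1=\measuredangle ZOY$, which by Theorem~\ref{find_zinB} satisfy $0\leq k_1\leq k_0\leq\pi/2$. Both $|x-y^*|$ (through $\cos(k_0+k_1)$) and the infimum $|x-z|+|z-y|$ (through $\cos k_0$, $\cos k_1$) are then written via the law of cosines in terms of $|x|,|y|,k_0,k_1$. Passing to the supremum over admissible $(k_0,k_1)$, inverting, and using $\inf(f+g)\geq\inf f+\inf g$ on the two square-root summands, each infimum is attained at $k_0=k_1=0$ and yields the clean intermediate bound
\[
\frac{s_{\B^2}(x,y)}{{\rm low}(x,y)}\;\leq\;\frac{1/|y|-|x|}{2-|x|-|y|}.
\]
Monotonicity of the right-hand side in $|x|\in(0,|y|]$ gives $\leq(1+|y|)/(2|y|)$, and since this is decreasing in $|y|\in[r,1)$ the bound $(1+r)/(2r)$ follows. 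Sharpness is verified with the explicit collinear family $x=\tfrac12$, $y=\tfrac12+j$, $j\to0^+$. None of this angular bookkeeping at the true minimiser appears in your sketch, and it is precisely what replaces the failed test-point approach by a genuine lower bound on $\inf_z(|x-z|+|z-y|)$.
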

\begin{proof}
Consider the quotient
\begin{align}\label{slow_quo}
\frac{s_{\B^2}(x,y)}{\text{low}(x,y)}=\frac{\min\{|x-y^*|,|x^*-y|\}}{\inf_{z\in S^1}(|x-z|+|z-y|)}.    
\end{align}
Fix $x,y\in\B^2$ such that $0<|x|\leq|y|$ and choose $z\in S^1$ so that it gives the infimum in the denominator of the quotient \eqref{slow_quo}. Let $k_0=\measuredangle ZOX$ and $k_1=\measuredangle ZOY$, where the point $o$ is the origin. Note that, by Theorem \ref{find_zinB}, $\measuredangle XZO=\measuredangle OZY$, so it follows that $0\leq k_1\leq k_0\leq\pi\slash2$. We can write that
\begin{align*}
\inf_{z\in S^1}(|x-z|+|z-y|)=\sqrt{|x|^2+1-2|x|\cos(k_0)}+\sqrt{|y|^2+1-2|y|\cos(k_1)}.
\end{align*}
Furthermore,
\begin{align*}
|x-y^*|=\sqrt{|x|^2+\frac{1}{|y|^2}-2\frac{|x|}{|y|}\cos(k_0+k_1)},\\
|x^*-y|=\sqrt{|y|^2+\frac{1}{|x|^2}-2\frac{|y|}{|x|}\cos(k_0+k_1)}.
\end{align*}

Now, we can find an upper bound for the quotient \eqref{slow_quo}:
\begin{align}
\frac{s_{\B^2}(x,y)}{\text{low}(x,y)}
&\leq\frac{|x-y^*|}{\inf_{z\in S^1}(|x-z|+|z-y|)}\nonumber\\
&\leq\sup_{0\leq k_1\leq k_0\leq\pi\slash2}\frac{\sqrt{|x|^2+1\slash|y|^2-2(|x|\slash|y|)\cos(k_0+k_1)}}{\sqrt{|x|^2+1-2|x|\cos(k_0)}+\sqrt{|y|^2+1-2|y|\cos(k_1)}}\nonumber\\
&=\left(\inf_{0\leq k_1\leq k_0\leq\pi\slash2}\frac{\sqrt{|x|^2+1-2|x|\cos(k_0)}+\sqrt{|y|^2+1-2|y|\cos(k_1)}}{\sqrt{|x|^2+1\slash|y|^2-2(|x|\slash|y|)\cos(k_0+k_1)}}\right)^{-1}\nonumber\\
&\leq\left(\inf_{0\leq k_1\leq 
k_0\leq\pi\slash2}\sqrt{\frac{|x|^2+1-2|x|\cos(k_0)}{|x|^2+1\slash|y|^2-2(|x|\slash|y|)\cos(k_0+k_1)}}\right.\nonumber\\
&\qquad\left.+\inf_{0\leq k_1\leq k_0\leq\pi\slash2}\sqrt{\frac{|y|^2+1-2|y|\cos(k_1)}{|x|^2+1\slash|y|^2-2(|x|\slash|y|)\cos(k_0+k_1)}}\right)^{-1}\nonumber\\
&=\left(
\sqrt{\frac{|x|^2+1-2|x|}{|x|^2+1\slash|y|^2-2(|x|\slash|y|)}}+
\sqrt{\frac{|y|^2+1-2|y|}{|x|^2+1\slash|y|^2-2(|x|\slash|y|)}}
\right)^{-1}\nonumber\\
&=\left(
\frac{1-|x|}{1\slash|y|-|x|}+
\frac{1-|y|}{1\slash|y|-|x|}
\right)^{-1}
=\frac{1\slash|y|-|x|}{2-|x|-|y|}\label{quo_fory}.
\end{align}

Let us yet find another upper bound for the quotient \eqref{quo_fory}. It can be shown by differentiation that the function $f:(0,1)\to\R$, 
\begin{align*}
f(|x|)=\frac{1\slash|y|-|x|}{2-|x|-|y|}    
\end{align*}
is increasing. It follows from this that
\begin{align*}
&|x|\leq|y|\quad\Leftrightarrow\quad f(|x|)\leq f(|y|)\quad\Leftrightarrow\quad \frac{1\slash|y|-|x|}{2-|x|-|y|}\leq\frac{1\slash|y|-|y|}{2-|y|-|y|}=\frac{1+|y|}{2|y|}.
\end{align*}
Thus, for all $x,y\in\B^2$ such that $0<|x|\leq|y|$, the quotient \eqref{slow_quo} fulfills the inequality
\begin{align}\label{ineq_slowquo}
\frac{s_{\B^2}(x,y)}{\text{low}(x,y)}\leq\frac{1\slash|y|-|x|}{2-|x|-|y|}\leq\frac{1+|y|}{2|y|}.
\end{align}

Fix now $x=1\slash2$ and $y=1\slash2+j$ with $0<j<1\slash2$. The quotient \eqref{slow_quo} is now
\begin{align*}
\frac{s_{\B^2}(x,y)}{\text{low}(x,y)}
=\frac{3+2j}{(2+4j)(1-j)}
=\frac{1+|y|}{2|y|(1-j)}
=\frac{1}{1-j}\cdot\frac{1+|y|}{2|y|},
\end{align*}
and it has the limit value
\begin{align*}
\lim_{j\to0^+}\frac{s_{\B^2}(x,y)}{\text{low}(x,y)}
=\frac{1+|y|}{2|y|}. 
\end{align*}
Thus, the inequality \eqref{ineq_slowquo} is sharp and this result proves that
\begin{align*}
\sup\frac{s_{\B^2}(x,y)}{\text{low}(x,y)}=\frac{1+\max\{|x|,|y|\}}{2\max\{|x|,|y|\}}.
\end{align*}
Since the quotient $(1+k)\slash(2k)$ is decreasing for $k\in(0,1)$, the theorem follows.
\end{proof}

The low-function yields a lower limit for also other hyperbolic type metrics.

\begin{lemma}
For all $x,y\in\B^2\backslash\{0\}$, the following inequalities hold and are sharp:\newline 
1. ${\rm low}(x,y)\leq\sqrt{2}j^*_{\B^2}(x,y)$,\newline
2. ${\rm low}(x,y)\leq p_{\B^2}(x,y)$,\newline
3. ${\rm low}(x,y)\leq b_{\B^2,2}(x,y)$.\newline
Furthermore, there is no $c>0$ such that ${\rm low}(x,y)\geq c\cdot d(x,y)$ for all $x,y\in \B^2\backslash\{0\}$, where $d\in\{j^*_{\B^2},p_{\B^2},b_{\B^2,2}\}$.
\end{lemma}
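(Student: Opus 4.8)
The plan is to read off all three inequalities from Lemma~\ref{lem_slow}, which gives ${\rm low}(x,y)\le s_{\B^2}(x,y)$, combined with the upper bounds for $s_{\B^2}$ already established. For part~2, Theorem~\ref{rhojsp_inB} gives ${\rm low}(x,y)\le s_{\B^2}(x,y)\le p_{\B^2}(x,y)$, and for part~3, Lemma~\ref{lem_sblund_inB} gives ${\rm low}(x,y)\le s_{\B^2}(x,y)\le b_{\B^2,2}(x,y)$. So nothing remains to prove for the inequalities in parts~2 and~3.

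For part~1 one must argue directly, since $j^*_{\B^2}$ is smaller than $s_{\B^2}$. Because both ${\rm low}$ and $j^*_{\B^2}$ are symmetric in $x,y$, assume $|x|\le|y|$. From $A[x,y]=|x|\,|y-x^*|$ (recorded after the definition of ${\rm low}$) together with the symmetry $A[x,y]=A[y,x]$ one gets $|x|\,|x^*-y|=|y|\,|x-y^*|$, whence $|x|\le|y|$ forces $\min\{|x-y^*|,|x^*-y|\}=|x-y^*|=A[x,y]/|y|$ and therefore ${\rm low}(x,y)=|y|\,|x-y|/A[x,y]$; also $j^*_{\B^2}(x,y)=|x-y|/(|x-y|+2(1-|y|))$. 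Thus ${\rm low}(x,y)\le\sqrt2\,j^*_{\B^2}(x,y)$ is equivalent to
\begin{align*}
|y|\bigl(|x-y|+2(1-|y|)\bigr)\le\sqrt2\,\sqrt{|x-y|^2+(1-|x|^2)(1-|y|^2)}.
\end{align*}
The left side does not depend on $|x|$ while the right side is decreasing in $|x|$, so for fixed $|y|=:b$ and $|x-y|=:t$ the extremal case is $|x|=|y|=b$ (and then $t$ ranges over $[0,2b]$). Squaring reduces the claim to the nonnegativity of the quadratic
\begin{align*}
F(t)=(2-b^2)t^2-4b^2(1-b)t+2(1-b^2)^2-4b^2(1-b)^2,
\end{align*}
which is convex in $t$ with minimum value $2(1-b)^3(b^3+3b^2+6b+2)/(2-b^2)\ge0$ for $b\in(0,1)$. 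The main obstacle is exactly this step: checking that $|x|=|y|$ is the worst configuration, and that $2+4b-3b^2-2b^3-b^4$ factors as $(1-b)(b^3+3b^2+6b+2)$, which makes the nonnegativity transparent.

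It remains to verify sharpness and the last claim by exhibiting boundary families. Letting $|x|\to0$ and $|y|\to1^-$ one computes ${\rm low}(x,y)/p_{\B^2}(x,y)\to|y|(2-|y|)\to1$ and ${\rm low}(x,y)/b_{\B^2,2}(x,y)\to|y|\sqrt{(1-|y|)^2+1}\to1$, which proves parts~2 and~3 are sharp. For part~1, take $x_n,y_n$ with $|x_n|=|y_n|=b_n\to1^-$ and $|x_n-y_n|=2(1-b_n)$; then $\bigl({\rm low}(x_n,y_n)/j^*_{\B^2}(x_n,y_n)\bigr)^2=2-F(|x_n-y_n|)/\bigl(|x_n-y_n|^2+(1-b_n^2)^2\bigr)\to2$, so $\sqrt2$ cannot be lowered. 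Finally, for the non-existence of a constant $c>0$ with ${\rm low}\ge c\cdot d$, put $x=\varepsilon$, $y=-\varepsilon$ and let $\varepsilon\to0^+$: here ${\rm low}(x,y)=2\varepsilon^2/(1+\varepsilon^2)$ is of order $\varepsilon^2$, whereas $j^*_{\B^2}(x,y)=\varepsilon$ and $p_{\B^2}(x,y),b_{\B^2,2}(x,y)$ are of order $\varepsilon$, so each quotient ${\rm low}(x,y)/d(x,y)$ with $d\in\{j^*_{\B^2},p_{\B^2},b_{\B^2,2}\}$ tends to $0$, ruling out any positive lower bound.
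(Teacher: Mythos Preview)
Your proof is correct. The difference from the paper lies mainly in part~1. The paper does not argue directly: it chains ${\rm low}(x,y)\le s_{\B^2}(x,y)$ from Lemma~\ref{lem_slow} with the bound $s_{\B^2}(x,y)\le\sqrt{2}\,j^*_{\B^2}(x,y)$ taken from \cite[Thm 2.9(1), p.~1129]{hvz}, so the whole inequality in part~1 is obtained by citation. You instead give a self-contained argument, reducing to the worst case $|x|=|y|$ and analyzing the quadratic $F(t)$; the factorization $-b^4-2b^3-3b^2+4b+2=(1-b)(b^3+3b^2+6b+2)$ is the key algebraic step and it checks out. Your approach buys independence from the external reference at the cost of a short computation.

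For sharpness the paper uses the families $x=k$, $y=ke^{2(1-k)i}$ (for $j^*$) and $x=k$, $y=-k$ (for $p$ and $b$) with $k\to1^-$, and for the non-existence clause it reuses the same families with $k\to0^+$. Your families differ (you send $|x|\to0$, $|y|\to1^-$ for parts~2--3, take $|x_n|=|y_n|\to1^-$ with $|x_n-y_n|=2(1-|y_n|)$ for part~1, and use $x=\varepsilon$, $y=-\varepsilon$ for the last claim), but they produce the same limit values. One small presentational point: in your sharpness argument for parts~2 and~3 the two-stage limit ``$|x|\to0$ then $|y|\to1^-$'' would be cleaner if realised by a single explicit sequence (e.g.\ $x_n=1/n^2$, $y_n=1-1/n$), since $x=0$ is excluded from the domain; the computation is unaffected.
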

\begin{proof}
The inequalities follow from Theorem \ref{rhojsp_inB}, Lemmas \ref{lem_sblund_inB} and \ref{lem_slow}, and \cite[Thm 2.9(1), p. 1129]{hvz}. Let $0<k<1$. Since
\begin{align*}
\lim_{k\to1^-}\frac{\text{low}(k,ke^{2(1-k)i})}{j^*_{\B^2}(k,ke^{2(1-k)i})}
&=\lim_{k\to1^-}\left(\frac{2k(k\sin(1-k)+1-k)}{\sqrt{k^4+1-2k^2\cos(2(1-k))}}\right)
=\sqrt{2},\\
\lim_{k\to1^-}\frac{\text{low}(k,-k)}{p_{\B^2}(k,-k)}
&=\lim_{k\to1^-}\left(\frac{2k\sqrt{2k^2-2k+1}}{k^2+1}\right)
=1,\\
\lim_{k\to1^-}\frac{\text{low}(k,-k)}{b_{\B^2,2}(k,-k)}
&=\lim_{k\to1^-}\left(\frac{\sqrt{2}k}{\sqrt{k^2+1}}\right)
=1,
\end{align*}
the inequalities are sharp. The latter part of the lemma follows the fact that the limit values above are all 0 if $k\to0^-$ instead.
\end{proof}

\section{Euclidean Midpoint Rotation}\label{section_emr}

In this section, we introduce the Euclidean midpoint rotation. Finding the value of the triangular ratio distance for two points in the unit disk is a trivial problem, if the points are collinear with the origin or at same distance from it, see Lemma \ref{lem_smetricinB_collinear} and Theorem \ref{smetricinB_forconjugate}. Since any two points can always be rotated around their midpoint into one of these two positions, this transformation gives us a simple way to estimate the value of the triangular ratio metric of the original points.   

\begin{definition}\label{def_emr}
\emph{Euclidean midpoint rotation.} Choose distinct points $x,y\in\B^2$. Let $k=(x+y)\slash2$, and $r=|x-k|=|y-k|$. Let $x_0,y_0\in S^1(k,r)$, $x_0\neq y_0$, so that $|x_0|=|y_0|$ and the points $x_0,k,y_0$ are collinear. Fix then $x_1,y_1\in S^1(k,r)$ so that $x_1,k,y_1$ are collinear, $|x_1|=|k|+r$ and $|y_1|=|k|-r$. Note that $x_0,y_0,y_1\in\B^2$ always but $x_1$ is not necessarily in $\B^2$. See Figure \ref{fig1}.
\end{definition}

For all $x,y\in\B^2$, $x\neq y$, such that $x_1\in\B^2$, the inequality
\begin{align*}
s_{\B^2}(x_0,y_0)\leq s_{\B^2}(x,y)\leq s_{\B^2}(x_1,y_1)    
\end{align*}
holds, as we will prove in Theorems \ref{thm_x0y0} and \ref{thm_x1y1}. If $x_1\notin\B^2$, $s_{\B^2}(x_1,y_1)$ is not defined but the first part of this inequality holds. In order to prove this result, let us next introduce a few results needed to find the value of $s_G$-diameter of a closed disk in some domain $G$.

\begin{figure}[ht]
    \centering
    \begin{tikzpicture}[scale=8]
    \draw[thick] (1,0) arc (0:90:1);
    \draw[thick] (0.4,0.4) circle (0.3cm);
    \draw[thick] (0.4,0.4) circle (0.01cm);
    \node[scale=1.3] at (0.39,0.46) {$k$};
    \draw (0,0) circle (0.01cm);
    \node[scale=1.3] at (0,0.05) {$o$};
    \draw (0.612,0.612) circle (0.01cm);
    \node[scale=1.3] at (0.612,0.661) {$x_1$};
    \draw (0.188,0.188) circle (0.01cm);
    \node[scale=1.3] at (0.188,0.241) {$y_1$};
    \draw (0.1879,0.6121) circle (0.01cm);
    \node[scale=1.3] at (0.188,0.662) {$x_0$};
    \draw (0.612,0.188) circle (0.01cm);
    \node[scale=1.3] at (0.615,0.243) {$y_0$};
    \draw (0.5,0.683) circle (0.01cm);
    \node[scale=1.3] at (0.5,0.7328) {$x$};
    \draw (0.3,0.117) circle (0.01cm);
    \node[scale=1.3] at (0.29,0.1672) {$y$};
    \draw[thick, dashed] (0.188,0.612) -- (0.612,0.188);
    \draw[thick, dashed] (0,0) -- (0.6121,0.6121);
    \draw[thick, dashed] (0.5,0.683) -- (0.3,0.117);
    \end{tikzpicture}
    \caption{Euclidean midpoint rotation}
    \label{fig1}
\end{figure}
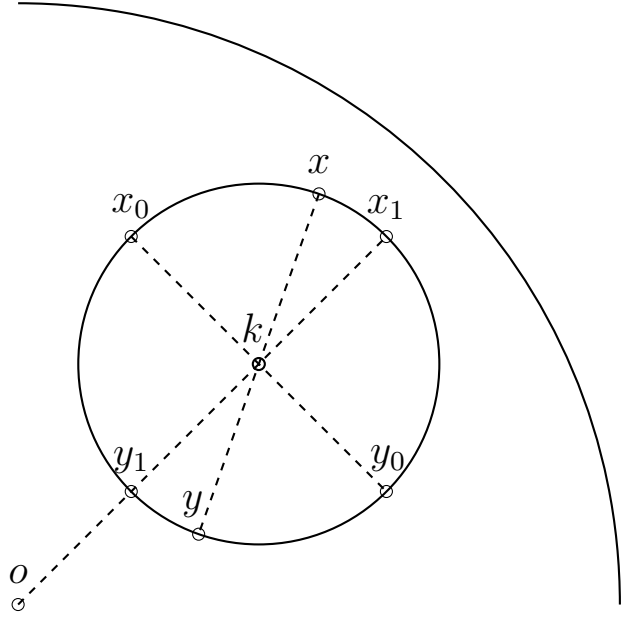

\begin{proposition}\label{prop_linediameter}
For a fixed point $x\in G$ and a fixed direction of $\overrightarrow{xy}$, the value of $s_G(x,y)$ is increasing with respect to $|x-y|$.
\end{proposition}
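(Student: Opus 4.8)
The plan is to fix $x\in G$ and a unit direction $u$, and to parametrize the relevant points as $y_t = x + tu$ for $t>0$ (so $|x-y_t| = t$), and then show that $t\mapsto s_G(x,y_t)$ is increasing. Writing $s_G(x,y_t) = t/D(t)$ where $D(t) = \inf_{z\in\partial G}(|x-z|+|z-y_t|)$, it suffices to prove that $D(t)/t$ is \emph{decreasing} in $t$, i.e. that for $0 < t_1 < t_2$ one has $D(t_1)/t_1 \geq D(t_2)/t_2$.

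The key observation I would use is a pointwise comparison for each fixed boundary point $z$. For a given $z\in\partial G$, consider the function $g_z(t) = |x-z| + |z - (x+tu)|$. The second term is the distance from the fixed point $z$ to a point moving along a ray from $x$; by the triangle inequality applied to the points $z$, $x$, $x+t_1 u$, $x + t_2 u$ (noting $x+t_1u$ lies on the segment $[x, x+t_2 u]$ since $0<t_1<t_2$), one gets $|z-(x+t_1u)| \le |z-x| + t_1$ and more usefully $|z-(x+t_2u)| \le |z-(x+t_1u)| + (t_2-t_1)$. Combining, $g_z(t_2) \le g_z(t_1) + (t_2 - t_1)$. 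From this one deduces $g_z(t_2)/t_2 \le g_z(t_1)/t_1$: indeed $g_z(t_1)/t_1 - g_z(t_2)/t_2 \ge g_z(t_1)/t_1 - (g_z(t_1)+(t_2-t_1))/t_2 = \big((t_2-t_1)g_z(t_1) - t_1(t_2-t_1)\big)/(t_1 t_2) = (t_2-t_1)(g_z(t_1)-t_1)/(t_1t_2) \ge 0$, where the last inequality is $g_z(t_1)\ge t_1 = |x-y_{t_1}|$, which is just the triangle inequality $|x-z|+|z-y_{t_1}|\ge |x-y_{t_1}|$. Hence for every $z\in\partial G$, $g_z(t_2)/t_2 \le g_z(t_1)/t_1$.

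Now take the infimum over $z\in\partial G$. Since $D(t)/t = \inf_z g_z(t)/t$ and we have the pointwise bound $g_z(t_2)/t_2 \le g_z(t_1)/t_1$ for each $z$, taking inf over $z$ on both sides gives $D(t_2)/t_2 \le D(t_1)/t_1$. Therefore $s_G(x,y_{t_2}) = t_2/D(t_2) \ge t_1/D(t_1) = s_G(x,y_{t_1})$, which is exactly the claimed monotonicity. One should also note $D(t)>0$ for $y_t\in G$ so the divisions are legitimate, and that the statement implicitly restricts $y$ to lie in $G$ along the ray, which is automatic.

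I do not expect any serious obstacle here; the only mild subtlety is making sure the infimum-of-quotients step is handled correctly (a pointwise inequality between two families of functions does pass to the infimum, but one should state it cleanly rather than wave hands), and being careful that the direction being ``fixed'' means $y-x$ is a positive multiple of $u$, so that the intermediate point $x + t_1 u$ genuinely lies on $[x, x+t_2u]$ — this collinearity is what makes the triangle-inequality step $g_z(t_2)\le g_z(t_1)+(t_2-t_1)$ work. If one preferred a calculus proof, one could instead differentiate $t/D(t)$ using that $D$ is $1$-Lipschitz in $t$ (from the same triangle inequality) and that $D(t)\ge t$, but the pointwise-comparison argument above avoids any differentiability concerns about the infimum and is the route I would write up.
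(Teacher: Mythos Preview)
Your proof is correct and follows essentially the same idea as the paper's: both rest on the triangle inequality $|z-y_{t_1}|+(t_2-t_1)\ge|z-y_{t_2}|$ together with the elementary fact that $(u+\mu)/(v+\mu)$ is increasing when $u\le v$ (which is exactly your computation $(t_2-t_1)(g_z(t_1)-t_1)/(t_1t_2)\ge 0$). The only organizational difference is that the paper first fixes the $z\in\partial G$ attaining the infimum for the closer point and works with that single $z$, whereas you prove the pointwise inequality $g_z(t_2)/t_2\le g_z(t_1)/t_1$ for every $z$ and then take the infimum; your version has the mild advantage of not assuming the infimum is attained.
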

\begin{proof}
Let $x,y\in G$ and $t\in[x,y]\cap G$. Choose $z\in\partial G$ so that
\begin{align*}
s_G(x,t)=\frac{|x-t|}{|x-z|+|z-t|}.    
\end{align*}
Because the function $f:(0,\infty)\to\R,f(\mu)=(u+\mu)\slash(v+\mu)$ with constants $0<u\leq v$ is increasing,
\begin{align*}
s_G(x,t)\leq\frac{|x-t|+|t-y|}{|x-z|+|z-t|+|t-y|}=\frac{|x-y|}{|x-z|+|z-t|+|t-y|}\leq s_G(x,y).    
\end{align*}
Thus, the result follows.
\end{proof}

\begin{proposition}\label{prop_functionf}
A function $f:[0,\pi\slash2]\to\R$,
\begin{align*}
f(\mu)=\sqrt{u-v\cos(\mu)}+\sqrt{u+v\cos(\mu)},    
\end{align*} 
where $u,v>0$ are constants, is increasing on the interval $\mu\in[0,\pi\slash2]$. 
\end{proposition}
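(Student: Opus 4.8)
The plan is to remove the two square roots by squaring, and then reduce the claim to the elementary fact that $\cos^2\mu$ is decreasing on $[0,\pi/2]$. Since $f(\mu)\ge 0$ on the whole interval, $f$ is increasing if and only if $f(\mu)^2$ is increasing, so it suffices to handle $f(\mu)^2$.

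First I would expand the square, using $(\sqrt{a}+\sqrt{b})^2=a+b+2\sqrt{ab}$ with $a=u-v\cos\mu$, $b=u+v\cos\mu$. This gives
\begin{align*}
f(\mu)^2=(u-v\cos\mu)+(u+v\cos\mu)+2\sqrt{(u-v\cos\mu)(u+v\cos\mu)}=2u+2\sqrt{u^2-v^2\cos^2\mu}.
\end{align*}
Next I would observe that on $[0,\pi/2]$ the function $\mu\mapsto\cos^2\mu$ is decreasing, hence $\mu\mapsto u^2-v^2\cos^2\mu$ is increasing and nonnegative (the latter being exactly what is needed for $f$ to be well defined, i.e.\ $u\ge v\cos\mu$), so $\mu\mapsto\sqrt{u^2-v^2\cos^2\mu}$ is increasing. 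Therefore $f(\mu)^2$ is increasing, and consequently so is $f$.

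Alternatively, one can argue by differentiation: writing $f$ as a function of $c=\cos\mu$, one gets $\frac{d}{dc}\big(\sqrt{u-vc}+\sqrt{u+vc}\big)=\frac{v}{2}\big(\tfrac{1}{\sqrt{u+vc}}-\tfrac{1}{\sqrt{u-vc}}\big)\le 0$ for $c\in[0,1]$ because $u+vc\ge u-vc$; since $c=\cos\mu$ is itself decreasing on $[0,\pi/2]$, the composition $f$ is increasing. There is essentially no obstacle here; the only point that requires a word of care is the implicit standing assumption $u\ge v$ (so that $u-v\cos\mu\ge 0$ throughout), which is automatically satisfied in the applications of this proposition in Section~\ref{section_emr}.
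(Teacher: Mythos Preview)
Your proposal is correct. The alternative differentiation argument you give at the end is exactly the paper's proof: substitute $s=\cos\mu$, compute $g'(s)=\frac{v}{2}\big(\frac{1}{\sqrt{u+vs}}-\frac{1}{\sqrt{u-vs}}\big)\le 0$ for $s\in[0,1]$, and conclude by composing with the decreasing map $\mu\mapsto\cos\mu$.

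Your primary squaring argument is a slightly different and arguably cleaner route. By collapsing $f(\mu)^2$ to $2u+2\sqrt{u^2-v^2\cos^2\mu}$ you avoid any differentiation and reduce the claim directly to the monotonicity of $\cos^2\mu$ on $[0,\pi/2]$; the paper instead keeps the two square roots separate and compares their derivatives. Both approaches rely on the same implicit domain hypothesis $u\ge v$ (needed so that $u-v\cos\mu\ge 0$ at $\mu=0$), which you correctly flag and which holds in every application in the paper.
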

\begin{proof}
Let $s=\cos(\mu)$, so that the function $f$ can be written as $g:[0,1]\to\R$, $g(s)=\sqrt{u-vs}+\sqrt{u+vs}$. By differentiation,
\begin{align*}
g'(s)=\frac{v}{2}\left(\frac{1}{\sqrt{u+vs}}-\frac{1}{\sqrt{u-vs}}\right)\leq0,
\end{align*}
and it follows that the function $g$ is decreasing on the interval $s\in[0,1]$. Because $s=\cos(\mu)$ is decreasing, too, with respect to $\mu$, the function $f$ is increasing.
\end{proof}

\begin{theorem}\label{fujimuras_result}
Fix $j,r,k,z\in\R$ such that $j\leq k<j+r<z$. Choose $x,y\in S^1(j,r)$ so that $\measuredangle ZKX=\mu$ with $0\leq\mu\leq\pi\slash2$ and $k\in[x,y]$. Then the quotient
\begin{align}\label{quo_sxyz}
\frac{|x-y|}{|x-z|+|z-y|} 
\end{align}
is decreasing with respect to $\mu$.
\end{theorem}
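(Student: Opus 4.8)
The plan is to set up coordinates, write the quotient \eqref{quo_sxyz} as an explicit function of $\mu$, and show its derivative is nonpositive on $[0,\pi/2]$. Place the center of the circle at $j$ on the real axis, so the relevant points are $x=j+r(\cos\alpha,\sin\alpha)$ and $y=j+r(\cos\beta,\sin\beta)$ on $S^1(j,r)$, with $z=(z,0)$ on the real axis and $z>j+r$. The condition that $k\in[x,y]$ with $k\in\R$ and $j\le k<j+r$ forces $x$ and $y$ to lie on opposite sides of the real axis on the chord through $k$; writing $\measuredangle ZKX=\mu$ this pins down the chord direction. The numerator $|x-y|=2r$ is the length of a diameter only when $k=j$; in general $|x-y|$ is the chord length, which depends on $\mu$ as well, so the first task is to record $|x-y|^2$, $|x-z|^2$, $|z-y|^2$ as functions of $\mu$ (and the fixed parameters $j,r,k,z$) using the law of cosines in the triangles $ZKX$ and $ZKY$: with $|k-z|=z-k=:d$ fixed, $|x-z|^2=|x-k|^2+d^2-2|x-k|\,d\cos\mu$ and $|z-y|^2=|y-k|^2+d^2+2|y-k|\,d\cos\mu$, where $|x-k|$ and $|y-k|$ are the two chord segments determined by the circle and the line through $k$ at angle $\mu$.

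Next I would exploit the geometry to simplify. A cleaner route than brute differentiation is to fix the chord line first: for a given $\mu$, the line through $k$ making angle $\mu$ with the real axis meets $S^1(j,r)$ in $x,y$; let $p=|x-k|$, $q=|y-k|$, so $pq=r^2-(j-k)^2$ is \emph{independent of $\mu$} (power of the point $k$), while $p+q=|x-y|$ does depend on $\mu$. Then the quotient is $\tfrac{p+q}{\sqrt{p^2+d^2-2pd\cos\mu}+\sqrt{q^2+d^2+2qd\cos\mu}}$. One should then express $p+q$ in terms of $\mu$: writing the foot of the perpendicular from $j$ to the chord, $p=\sqrt{r^2-\delta^2}+e$ and $q=\sqrt{r^2-\delta^2}-e$ for suitable $\delta,e$ depending on $\mu$ through the fixed distance $|j-k|$, so $p+q=2\sqrt{r^2-\delta^2}$ with $\delta=|j-k|\sin\mu$ increasing in $\mu$; hence the numerator is \emph{decreasing} in $\mu$. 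For the denominator, Proposition \ref{prop_functionf} is the model: if $p=q$ (the case $k=j$) the denominator is exactly $f(\mu)=\sqrt{u-v\cos\mu}+\sqrt{u+v\cos\mu}$ with $u=p^2+d^2$, $v=2pd$, which is \emph{increasing}, and then numerator-decreasing over denominator-increasing gives the claim immediately.

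The main obstacle is therefore the general case $p\ne q$, where both $p,q$ and the denominator move with $\mu$ simultaneously, so the monotonicity is not a simple quotient of a decreasing and an increasing function. Here I would either (i) reduce to the symmetric situation by a continuity/rearrangement argument — note $k\in[x,y]$ and $j\le k$ means we may first slide to $k=j$ controlling the sign of the change — or, more robustly, (ii) compute $\tfrac{d}{d\mu}$ of the quotient directly and show the numerator of the derivative is $\le 0$. For (ii), substituting $s=\cos\mu$ as in Proposition \ref{prop_functionf} and clearing the square roots should reduce the inequality to a polynomial inequality in $s$ and the parameters; the constraints $j\le k<j+r<z$ (equivalently $d>0$, $0<p,q$, $pq=r^2-(j-k)^2\ge 0$) should make all the offending terms have a definite sign. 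I expect the bookkeeping in (ii) to be the longest part, but conceptually routine once the law-of-cosines expressions above are in hand; the conceptual content is entirely captured by the two monotonicities (numerator down, each denominator summand up) already isolated in Propositions \ref{prop_linediameter} and \ref{prop_functionf}.
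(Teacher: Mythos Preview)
Your setup is clean and the case $k=j$ is handled exactly as in the paper (constant numerator, denominator increasing by Proposition~\ref{prop_functionf}). The power-of-the-point observation $pq=r^2-(k-j)^2$ and the formula $p+q=2\sqrt{r^2-(k-j)^2\sin^2\mu}$ are correct and nicely isolate why the numerator decreases.

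However, for the general case $k>j$ there is a genuine gap, and your closing claim that ``the conceptual content is entirely captured by the two monotonicities (numerator down, each denominator summand up)'' is not right. The denominator is \emph{not} increasing in $\mu$ in general: for instance with $j=0$, $r=1$, $k=1/2$, $z=3/2$ one has denominator $3$ at $\mu=0$ and $2\sqrt{7}/\sqrt{4}=\sqrt{7}\approx 2.65$ at $\mu=\pi/2$. (Algebraically, $2(k-j)(z-j)>r^2$ forces the denominator at $\mu=0$ to exceed that at $\mu=\pi/2$.) So the quotient's monotonicity genuinely requires a cancellation between a shrinking numerator and a shrinking denominator; your route (i) is too vague to supply this, and route (ii) --- while possibly workable --- is not ``conceptually routine'' once you actually substitute $p=L-cs$, $q=L+cs$ with $L=\sqrt{r^2-c^2+c^2s^2}$ and differentiate. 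You have not carried any of it out.

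The paper handles $k>j$ quite differently and more cleanly: it parametrizes by the argument $\theta$ of $x=e^{i\theta}$ (rather than by $\mu$), writes the quotient as a function $h(\theta)$, observes from reflection symmetry across the real axis that $h(\theta)=h(-\varphi)$ whenever $y=e^{i\varphi}$ lies on the chord through $x$ and $k$, and then uses Rolle's theorem. The endpoint values $h(0)=h(\pi)=1/z$ are visibly the maximum, and the symmetry forces any interior critical point to satisfy $e^{i\theta}=(e^{-i\theta}-k)/(ke^{-i\theta}-1)$, which solves to $\mathrm{Re}(e^{i\theta})=k$, i.e.\ $\mu=\pi/2$. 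With exactly one interior critical point, maximum at $\mu=0$ and minimum at $\mu=\pi/2$, monotonicity on $[0,\pi/2]$ follows. This Rolle-plus-symmetry argument is the idea your plan is missing.
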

\begin{proof}
Suppose without loss of generality that $j=0$ and $r=1$. First, we will consider the special case where $k=0$. From the condition $k\in[x,y]$, it follows that $x,y$ are the endpoints of a diameter of $S^1$ and therefore $|x-y|=2$ for all angles $\mu$. Since $|x|=|y|=1$ and $z=1+d$, we obtain by the law of cosines
\begin{align*}
|x-z|&=\sqrt{1+(1+d)^2-2(1+d)\cos(\mu)},\\
|z-y|&=\sqrt{1+(1+d)^2+2(1+d)\cos(\mu)}.
\end{align*}
The sum $|x-z|+|z-y|$ can be described with the function $f$ of Proposition \ref{prop_functionf} if the constants $u,v$ are replaced with $1+(1+d)^2>0$ and $2(1+d)>0$, respectively. By Proposition \ref{prop_functionf}, this function $f$ is increasing with respect to $\mu\in[0,\pi\slash2]$. Since the quotient \eqref{quo_sxyz} can be clearly written as $2\slash f(\mu)$, it follows that it must be decreasing with respect to $\mu$.  

Suppose now that $S^1(j,r)$ is still the unit circle $S^1$, but let $0<k<1$. The equation of the line $L(x,y)$ can be written as
\begin{align}\label{line_xy_equ}
t+xy\overline{t}=x+y    
\end{align}
with $t\in\C$ as variable. Here, $x$ can be written as $e^{\theta i}$ with $0\leq\theta<\pi\slash2$. Furthermore, the line $L(x,y)$ must contain $k$ and, by substituting $t=k$ in \eqref{line_xy_equ}, we will have 
\begin{align*}
y=\frac{x-k}{kx-1}=\frac{e^{\theta i}-k}{ke^{\theta i}-1}.    
\end{align*}
Consider now a function $h:[0,2\pi)\to\R$,
\begin{align*}
h(\theta)=\frac{|e^{\theta i}-(e^{\theta i}-k)\slash(ke^{\theta i}-1)|}{|e^{\theta i}-z|+|z-(e^{\theta i}-k)\slash(ke^{\theta i}-1)|},  
\end{align*}
which clearly depicts the values of the quotient \eqref{quo_sxyz}. For all $\theta\in[0,\pi\slash2]$, by symmetry,
\begin{align}\label{phi_consequence}
y=e^{\varphi i}=\frac{e^{\theta i}-k}{ke^{\theta i}-1}\quad\Rightarrow\quad h(\theta)=h(-\varphi)   
\end{align}
The function $h$ fulfills $h(0)=h(\pi)=1\slash z$, which is clearly its maximum value. If $\theta=0$, then so is $\mu$, so the maximum of the quotient \eqref{quo_sxyz} is at $\mu=0$. By Rolle's theorem, there is a critical point $\Tilde{\theta}$ such that $f'(\Tilde{\theta})=0$. By the property \eqref{phi_consequence}, $\Tilde{\theta}$ is the solution of
\begin{align*}
 e^{\theta i}=\frac{e^{-\theta i}-k}{ke^{-\theta i}-1}.  
\end{align*}
Thus,
\begin{align*}
\frac{e^{\theta i}+e^{-\theta i}}{2}=k
\quad\Rightarrow\quad
\text{Re}(e^{\theta})=k
\quad\Rightarrow\quad
\mu=\frac{\pi}{2}.
\end{align*}
Consequently, the quotient \eqref{quo_sxyz} attains its minimum value at $\mu=\pi\slash2$. Because there are no other points where the derivative $h'$ is 0 at the open interval $0<\theta<\pi\slash2$ than the one found above, the quotient is monotonic on the interval $\mu\in[0,\pi\slash2]$. To be more specific, the quotient must be decreasing because its maximum is at $\mu=0$ and minimum at $\mu=\pi\slash2$. 

Thus, we have proved that the quotient \eqref{quo_sxyz} is decreasing with respect to $\mu$, regardless of if $k=j$ or $k>j$.  
\end{proof}

\begin{theorem}\label{thm_supInRing}
Fix $S^{n-1}(j,r)\subset\R^n$ and $z\in\R^n$ so that $d=|z-j|-r>0$. Then 
\begin{align*}
\sup_{x,y\in S^{n-1}(j,r)}\frac{|x-y|}{|x-z|+|z-y|}
=\frac{r}{r+d}.
\end{align*}
\end{theorem}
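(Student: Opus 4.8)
The plan is to reduce the $n$-dimensional statement to the two-dimensional situation already analyzed in Theorem \ref{fujimuras_result}, and then to identify the maximizing configuration of the pair $x,y$ on the sphere $S^{n-1}(j,r)$. First I would observe that for any two points $x,y\in S^{n-1}(j,r)$, the three points $x$, $y$, $z$ together with $j$ span an affine subspace of dimension at most three, but in fact the quotient $|x-y|/(|x-z|+|z-y|)$ depends only on the three mutual distances $|x-y|$, $|x-z|$, $|z-y|$, so we may work in the plane through $x,y,z$. Intersecting that plane with $S^{n-1}(j,r)$ gives a circle of some radius $r'\le r$ centered at a point $j'$ with $|z-j'|\ge|z-j|$ (here I would use that the foot of the perpendicular from $z$ to the plane, or rather the geometry of the sphere-plane intersection, can only push the center away and shrink the radius; this elementary fact should be spelled out with the Pythagorean relation $r'^2 = r^2 - \delta^2$ and $|z-j'|^2 = |z-j|^2 - \delta^2$ where $\delta$ is the distance from $j$ to the plane). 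Consequently $d' := |z-j'| - r' \ge |z-j| - r = d$, since $t\mapsto \sqrt{t+c}-\sqrt{t}$ is decreasing. So it suffices to prove the two-dimensional case and check that the bound $r/(r+d)$ is both an upper bound and attained.

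For the two-dimensional case, fix the circle $S^1(j,r)$ and the point $z$ with $d=|z-j|-r>0$. For the upper bound I would argue as follows: given arbitrary $x,y\in S^1(j,r)$, let $k$ be the point of the chord $[x,y]$ that is closest to $j$ along the direction of $\overrightarrow{xy}$; more usefully, translate $[x,y]$ parallel to itself until its midpoint lies on the ray from $j$ through $z$ — no, cleaner: by Proposition \ref{prop_linediameter} applied with base point and direction, extending the chord to a full diameter only increases the quotient, so we may assume $x,y$ are antipodal on $S^1(j,r)$, i.e. $|x-y|=2r$ and $j\in[x,y]$. Now Theorem \ref{fujimuras_result} (with its $k$ equal to our center $j$, its $j$ also equal to our center, $r$ our radius, and $z$ on the ray at distance $r+d$) says the quotient is decreasing in the angle $\mu=\measuredangle ZJX\in[0,\pi/2]$, hence is maximized at $\mu=0$, where $x$ lies on segment $[j,z]$ with $|x-z|=d$ and $|y-z|=2r+d$, giving quotient $2r/(2r+2d)=r/(r+d)$. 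This simultaneously shows the supremum is attained.

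I expect the main obstacle to be the dimension-reduction step: one must be careful that slicing $S^{n-1}(j,r)$ by the plane containing $x,y,z$ genuinely yields a circle (it does, as long as the plane meets the sphere, which it must since it contains $x,y\in S^{n-1}(j,r)$) and that the resulting parameters satisfy $d'\ge d$ with the correct monotonicity, and also that the chord-to-diameter extension via Proposition \ref{prop_linediameter} stays inside the relevant plane and keeps $z$ exterior. A minor point to handle: Theorem \ref{fujimuras_result} is stated with collinear real data $j\le k<j+r<z$ and $k\in[x,y]$; after the reduction we have exactly $k=j$ (the degenerate-but-allowed case $k=j$ is covered in its proof), so it applies directly. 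Everything else is the routine law-of-cosines computation already carried out in the proof of Theorem \ref{fujimuras_result}, so I would not repeat it.
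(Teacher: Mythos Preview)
Your dimension-reduction step is more careful than the paper's (which simply asserts ``without loss of generality $n=2$''); it is correct, though you should add the one-line check that $r'/(r'+d')\le r/(r+d)$, which follows from $r'\le r$, $d'\ge d$ via $r'd\le rd\le rd'$.

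The genuine gap is in your two-dimensional argument. Proposition~\ref{prop_linediameter} lets you slide an endpoint along a \emph{fixed} line while the quotient increases; but your points $x,y$ are already on $S^1(j,r)$, and the line $L(x,y)$ is in general a non-diametral chord. Extending along that line takes you \emph{outside} $S^1(j,r)$, not to antipodal points, so you cannot conclude ``we may assume $x,y$ are antipodal'' from Proposition~\ref{prop_linediameter}. (Your abandoned parallel-translation idea also fails: translating the chord changes all three distances in an uncontrolled way.) Thus your invocation of Theorem~\ref{fujimuras_result} at $k=j$ handles only the diametral configuration, and the reduction to it is missing.

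The paper does not reduce to the antipodal case. After normalizing $j=0$, $r=1$, $z>1$ and $0\le\arg x\le\pi/2$, it splits into three cases: if $\arg x=0$ the maximum over $y$ is found directly; if $\arg x>0$ and $\arg y\le\pi$, a clockwise rotation by $\arg x$ fixes $|x-y|$ while decreasing $|x-z|$ and $|y-z|$, reducing to the first case; if $\arg x>0$ and $\pi<\arg y<2\pi$, the chord meets the real axis at some $k$, and after a possible reflection to get $k\ge0$ one applies Theorem~\ref{fujimuras_result} with that $k$ (not with $k=j$). So the full strength of Theorem~\ref{fujimuras_result} for general $k$, together with a rotation argument, is what replaces your attempted shortcut.
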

\begin{proof}
Suppose without loss of generality that $n=2$, $j=0$, $r=1$ and $z=d+1\in(1,\infty)$. By symmetry, we can assume that the points $x,y\in S^1$ fulfill $0\leq\arg(x)\leq\pi\slash2$ and $\arg(x)<\arg(y)<2\pi$. We will next prove the theorem by inspecting the quotient \eqref{quo_sxyz} in a few different cases separately.

Consider first the case where $\arg(x)=0$. Now, $x=1$ and $y=e^{\varphi i}$ for some $0<\varphi<2\pi$. It follows that
\begin{align*}
\frac{|x-y|}{|x-z|+|z-y|}=\frac{|1-e^{\varphi i}|}{d+|1+d-e^{\varphi i}|}=\left(\frac{d}{|1-e^{\varphi i}|}+\frac{|1+d-e^{\varphi i}|}{|1-e^{\varphi i}|}\right)^{-1}.
\end{align*}
Since both of the quotients $d\slash|1-e^{\varphi i}|$ and $|1+d-e^{\varphi i}|\slash|1-e^{\varphi i}|$ obtain clearly their minimum with $\varphi=\pi$, the quotient \eqref{quo_sxyz} is at maximum within limitation $x=1$ when $y=-1$.  

Suppose then that $\arg(x)=\theta\neq0$ and $\arg(y)\leq\pi$. Now, we can rotate the points $x,y$ by the angle $\theta$ clockwise about the origin. This transformation does not affect the distance $|x-y|$ but decreases distances $|x-z|$ and $|z-y|$, so it increases the value of the the quotient \eqref{quo_sxyz}. Since $x$ maps into 1 in the rotation, this transformation leads to the first case studied above. 

Finally, consider the case where $\arg(x)\neq0$ and $\pi<\arg(y)<2\pi$. Now, $(x,y)\cap(-1,1)\neq\varnothing$, so we can choose a point $k\in(x,y)\cap(-1,1)$. If $-1<k<0$, we can always reflect the points $x,y$ over the imaginary axis so that the quotient \eqref{quo_sxyz} increases. Thus, we can suppose that $0\leq k<1$. By Theorem \ref{fujimuras_result}, the quotient is decreasing with respect to $\measuredangle ZKX=\mu\in[0,\pi\slash2]$, so its maximum is at $\mu=0$. It follows that $x=1$ and $y=-1$.

Thus, the quotient \eqref{quo_sxyz} obtains its highest value with $x=1$ and $y=-1$. In the general case $x,y\in S^1(j,r)$, this means that $x=j+r$ and $y=j-r$. Since the value of the quotient \eqref{quo_sxyz} is now $r\slash(r+d)$, the result follows.
\end{proof}

\begin{corollary}\label{cor_diskdiameter}
The $s_G$-diameter of a closed ball $J=\overline{B}^n(k,r)$ in a domain $G\subsetneq\R^n$ is $s_G(J)=r\slash(r+d)$, where $d=d(J,\partial G)$.
\end{corollary}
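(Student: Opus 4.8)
The plan is to reduce Corollary \ref{cor_diskdiameter} to Theorem \ref{thm_supInRing} by the two propositions that precede it. The $s_G$-diameter of $J=\overline{B}^n(k,r)$ is $s_G(J)=\sup\{s_G(x,y):x,y\in J\}$, so the task is to show this supremum is attained in the limit by points on the boundary sphere $S^{n-1}(k,r)$ and that the relevant infimum in the definition of $s_G$ is governed by the nearest boundary point of $G$ to $J$.

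First I would argue that it suffices to take $x,y\in S^{n-1}(k,r)$: given any $x,y\in J$, I would extend the segment $[x,y]$ in both directions until it meets $S^{n-1}(k,r)$, obtaining $x',y'$ with $[x,y]\subset[x',y']$ and $x,y\in[x',y']$; then Proposition \ref{prop_linediameter} (applied from each endpoint, or directly in the form that $s_G$ increases along a chord) gives $s_G(x,y)\le s_G(x',y')$. Hence $s_G(J)=\sup_{x,y\in S^{n-1}(k,r)}s_G(x,y)$.

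Next, for $x,y\in S^{n-1}(k,r)$ I would bound $s_G(x,y)$ from above by replacing the infimum over $\partial G$ with the single choice $z=z_0$, where $z_0\in\partial G$ realizes $d(J,\partial G)=d$, i.e. $|z_0-k|=r+d$. This gives
\begin{align*}
s_G(x,y)=\frac{|x-y|}{\inf_{z\in\partial G}(|x-z|+|z-y|)}\le\frac{|x-y|}{|x-z_0|+|z_0-y|}\le\sup_{x,y\in S^{n-1}(k,r)}\frac{|x-y|}{|x-z_0|+|z_0-y|}=\frac{r}{r+d},
\end{align*}
where the last equality is exactly Theorem \ref{thm_supInRing} with $j=k$ and this $z_0$ (since $|z_0-k|-r=d>0$). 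For the reverse inequality, I would take $x,y$ to be the two intersection points of $S^{n-1}(k,r)$ with the line through $k$ and $z_0$ on the near side, i.e. the diametrically opposite pair aligned toward $z_0$; for such a pair the nearest boundary point along the extended line is $z_0$ itself, every other $z\in\partial G$ satisfies $|x-z|+|z-y|\ge |x-z_0|+|z_0-y|$ because $z_0$ is the closest point of $\partial G$ to the whole ball (and the configuration is the equality case of Theorem \ref{thm_supInRing}), so $\inf_{z\in\partial G}(|x-z|+|z-y|)=2r+d$ while $|x-y|=2r$, giving $s_G(x,y)=r/(r+d)$. Combining the two directions yields $s_G(J)=r/(r+d)$.

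The main obstacle is the lower-bound step: justifying that for the specially chosen antipodal pair $x,y$ aligned with $z_0$, the infimum $\inf_{z\in\partial G}(|x-z|+|z-y|)$ is actually achieved at $z_0$ and equals $2r+d$, rather than being made smaller by some other, possibly closer-in-a-different-direction, boundary point. This needs the fact that $d=d(J,\partial G)$ means every $w\in\partial G$ lies at distance $\ge d$ from $J$, hence at distance $\ge r+d$ from $k$; one then has to check that for the collinear antipodal pair the ellipse $\{z:|x-z|+|z-y|=2r+d\}$ (whose foci are $x,y$ and whose minor semi-axis meets the $k$-direction at distance $r+d$ from $k$... ) — more simply, that $|x-z|+|z-y|\ge 2|k-z|\ge 2(r+d)$? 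That inequality is false in general, so instead I would invoke Theorem \ref{thm_supInRing} directly in its sharp form: it asserts the supremum over \emph{all} pairs on the sphere of $|x-y|/(|x-z_0|+|z_0-y|)$ equals $r/(r+d)$ and is attained precisely at the collinear antipodal pair, which simultaneously pins down $|x-z_0|+|z_0-y|=2r+d$ for that pair; one then only needs that no other $z\in\partial G$ beats $z_0$ for this particular pair, which follows since moving to any $z$ with $|z-k|\ge r+d$ off the axis only increases $|x-z|+|z-y|$ for the axial pair. I would spell this monotonicity out carefully, as it is the one place the argument could slip.
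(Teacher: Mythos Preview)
Your upper-bound step has the inequality pointing the wrong way. Since $s_G(x,y)=|x-y|/\inf_{z\in\partial G}(|x-z|+|z-y|)$ and $\inf_{z\in\partial G}(\cdot)\le |x-z_0|+|z_0-y|$, replacing the infimum by the single point $z_0$ gives
\[
s_G(x,y)\ \ge\ \frac{|x-y|}{|x-z_0|+|z_0-y|},
\]
not $\le$. So what you labelled the upper bound is in fact the (easy) lower bound: taking the supremum over $x,y\in S^{n-1}(k,r)$ and applying Theorem \ref{thm_supInRing} yields $s_G(J)\ge r/(r+d)$ immediately, with no need for your ``main obstacle'' discussion about the axial pair.

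The genuine content is the upper bound $s_G(J)\le r/(r+d)$, and for that fixing a single $z_0$ cannot work: for a given pair $x,y$ there may well be some other $z\in\partial G$ with smaller $|x-z|+|z-y|$ than $z_0$. The paper handles this by writing $s_G(x,y)=\sup_{z\in\partial G}|x-y|/(|x-z|+|z-y|)$ and swapping the two suprema,
\[
s_G(J)=\sup_{x,y\in J}\,\sup_{z\in\partial G}\frac{|x-y|}{|x-z|+|z-y|}
      =\sup_{z\in\partial G}\,\sup_{x,y\in J}\frac{|x-y|}{|x-z|+|z-y|}.
\]
For each fixed $z\in\partial G$ (not just $z_0$), Proposition \ref{prop_linediameter} reduces the inner supremum to the sphere and Theorem \ref{thm_supInRing} evaluates it as $r/(r+d_z)$ with $d_z=|z-k|-r\ge d$; hence $r/(r+d_z)\le r/(r+d)$ and the outer supremum is $r/(r+d)$. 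This swap is the missing idea in your argument.
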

\begin{proof}
Clearly,
\begin{align*}
s_G(J)&=\sup_{x,y\in J}s_G(x,y)=\sup_{x,y\in J}\left(\sup_{z\in\partial G}\frac{|x-y|}{|x-z|+|z-y|}\right)\\
&=\sup_{z\in\partial G}\left(\sup_{x,y\in J}\frac{|x-y|}{|x-z|+|z-y|}\right)
=\sup_{z\in\partial G}\left(\sup_{x,y\in J}s_{\R^n\backslash\{z\}}(x,y)\right)\\
&=\sup_{z\in\partial G}s_{\R^n\backslash\{z\}}(J).
\end{align*}
Trivially, $s_{\R^n\backslash\{z\}}(J)$ is at maximum when the distance $d(z,J)$ is at minimum. Thus,
\begin{align}\label{sup_inJ}
s_G(J)=\sup_{x,y\in J}\frac{|x-y|}{|x-z|+|z-y|},    
\end{align}
where $z\in\partial G$ such that $d=d(z,J)=d(J,\partial G)$. It follows from Proposition \ref{prop_linediameter} that, for all distinct $x,y\in J$, we can choose $s,t\in\partial J$, $s\neq t$, such that $[s,t]=L(x,y)\cap J$ and $s_G(s,t)\geq s_G(x,y)$. Thus, the points $x,y$ giving the supremum in \eqref{sup_inJ} must belong to $S^{n-1}(k,r)$. By Theorem \ref{thm_supInRing}, it follows from this that 
\begin{align*}
s_G(J)=\sup_{x,y\in S^{n-1}(k,r)}\frac{|x-y|}{|x-z|+|z-y|}
=\frac{r}{r+d}.
\end{align*}
\end{proof}

\begin{corollary}\label{cor_diam_diskinB}
The $s_{\B^n}$-diameter of a ball $J=\overline{B}^n(k,r)\subset\B^n$ is $s_{\B^n}(J)=r\slash(1-|k|)$.
\end{corollary}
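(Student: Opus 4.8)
The plan is to derive this immediately from Corollary \ref{cor_diskdiameter}, so the only work is an elementary distance computation. Corollary \ref{cor_diskdiameter} states that for a closed ball $J=\overline{B}^n(k,r)$ in a domain $G\subsetneq\R^n$ one has $s_G(J)=r\slash(r+d)$ with $d=d(J,\partial G)$. Here we take $G=\B^n$, so $\partial G=S^{n-1}$, and we must identify $d=d(\overline{B}^n(k,r),S^{n-1})$.

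First I would compute this distance. For a fixed boundary point $z\in S^{n-1}$ we have $\inf_{u\in J}|u-z|=|z-k|-r$, since the closest point of the ball $\overline{B}^n(k,r)$ to an exterior point $z$ lies on the segment $[k,z]$ at distance $r$ from $k$ (and $z\notin J$ because $J\subset\B^n$). Taking the infimum over $z\in S^{n-1}$ and using $\inf_{z\in S^{n-1}}|z-k|=1-|k|$ (the nearest point of the unit sphere to $k$ lies along the ray from the origin through $k$), we obtain
\begin{align*}
d=d(\overline{B}^n(k,r),S^{n-1})=(1-|k|)-r.
\end{align*}
The hypothesis $J\subset\B^n$ guarantees $r<1-|k|$, hence $d>0$ and Corollary \ref{cor_diskdiameter} indeed applies. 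Substituting, $r+d=1-|k|$, so $s_{\B^n}(J)=r\slash(r+d)=r\slash(1-|k|)$, as claimed. There is no genuine obstacle here; the statement is a direct specialization of Corollary \ref{cor_diskdiameter} once the boundary distance is evaluated.
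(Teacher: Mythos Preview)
Your proof is correct and takes essentially the same approach as the paper, which simply says the result ``follows directly from Corollary~\ref{cor_diskdiameter}.'' You have merely made explicit the elementary computation $d(J,S^{n-1})=(1-|k|)-r$ that the paper leaves to the reader.
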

\begin{proof}
Follows directly from Corollary \ref{cor_diskdiameter}.   
\end{proof}

\begin{corollary}
For all $x,y\in\B^n$ such that $|y|\leq|x|$, the inequality $s_{\B^n}(x,y)\leq|x|$ holds.
\end{corollary}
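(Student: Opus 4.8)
The plan is to deduce the inequality immediately from Corollary \ref{cor_diam_diskinB}. Given $x,y\in\B^n$ with $|y|\leq|x|$, set $r=|x|$ and consider the closed ball $J=\overline{B}^n(0,r)$. Since $|x|<1$, we have $J\subset\B^n$, and by construction both $x$ and $y$ lie in $J$ because $|x|=r$ and $|y|\leq r$.

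Next I would apply Corollary \ref{cor_diam_diskinB} with center $k=0$ and radius $r=|x|$: it gives $s_{\B^n}(J)=r/(1-|k|)=|x|$. Since $s_{\B^n}(J)$ is by definition the supremum of $s_{\B^n}(u,v)$ over all $u,v\in J$, and $x,y\in J$, we conclude
\begin{align*}
s_{\B^n}(x,y)\leq s_{\B^n}(J)=|x|,
\end{align*}
which is the claim.

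There is essentially no obstacle here; the only point that needs a (trivial) check is that the closed ball $\overline{B}^n(0,|x|)$ is genuinely a subset of the open unit ball $\B^n$, which holds because $|x|<1$. So the whole argument is a one-line consequence of the diameter formula, and the proof can be stated in two or three sentences.
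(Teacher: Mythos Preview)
Your proposal is correct and follows exactly the same approach as the paper's proof: apply Corollary~\ref{cor_diam_diskinB} to the ball centered at the origin with radius $|x|$ and read off $s_{\B^n}(J)=|x|$. The paper writes this in one line (with $J=B^n(|x|)$); your use of the closed ball $\overline{B}^n(0,|x|)$ is in fact slightly more careful, since $x$ lies on the boundary sphere.
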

\begin{proof}
Since $y\in J=B^n(|x|)$, $s_{\B^n}(x,y)\leq s_{\B^n}(J)$ and, by Corollary \ref{cor_diam_diskinB}, $s_{\B^n}(J)=|x|$.
\end{proof}

Consider yet the following situation.

\begin{lemma}
For all points $x\in\B^2\backslash\{0\}$ and $y\in B^2(|x|)$ non-collinear with the origin,
\begin{align*}
s_{\B^2}(x,y)<s_{\B^2}(x,y'),\text{ where }y'=xe^{2\psi i}\text{ and } \psi=\arcsin\left(\frac{|x-y|}{2|x|}\right).    
\end{align*}
\end{lemma}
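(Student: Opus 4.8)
The plan is to rewrite the claim as a comparison of two boundary infima and then to compare the extremal boundary points by transport. Since $|x-y'|=|x|\,|1-e^{2\psi i}|=2|x|\sin\psi=|x-y|$, if we put $m(w)=\inf_{z\in S^1}(|x-z|+|z-w|)$ for $w\in\B^2$, then $s_{\B^2}(x,y)=|x-y|/m(y)$ and $s_{\B^2}(x,y')=|x-y|/m(y')$, so the assertion $s_{\B^2}(x,y)<s_{\B^2}(x,y')$ is equivalent to $m(y)>m(y')$. Both infima are attained, $S^1$ being compact; fix $z_1\in S^1$ with $m(y)=|x-z_1|+|z_1-y|$. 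By Theorem \ref{find_zinB} the radius $Oz_1$ bisects $\measuredangle XZ_1Y$, so if $z_1$ lay on $L(0,x)$ then $x$ and $y$ would both lie on $L(0,z_1)$, contradicting that $y$ is not collinear with the origin; hence $z_1\notin L(0,x)$ and the circle $S^1(x,|x-z_1|)$ meets $S^1$ transversally at $z_1$.

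The next step is to transport $z_1$ into a competitor for $m(y')$. Let $R$ be the rotation about the common point $x$ carrying $y$ to $y'$; the choice of sign in $y'=xe^{\pm2\psi i}$ is harmless, since reflection in $L(0,x)$ fixes $x$ and $S^1$ and interchanges $xe^{2\psi i}$ and $xe^{-2\psi i}$, so $s_{\B^2}(x,xe^{2\psi i})=s_{\B^2}(x,xe^{-2\psi i})$; I pick the sign so that $R$ moves $z_1$ along $S^1(x,|x-z_1|)$ in the direction of increasing distance from the origin. Writing $z'=R(z_1)$, we have $|x-z'|=|x-z_1|$ and $|z'-y'|=|R(z_1)-R(y)|=|z_1-y|$, hence $|x-z'|+|z'-y'|=m(y)$.

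It then suffices to show $|z'|>1$. Granting this, the filled ellipse $E=\{w:|x-w|+|w-y'|\le m(y)\}$ is convex and contains $x$ (interior to $\B^2$) and $z'$ (outside $\overline{\B^2}$); on the segment $[x,z']$ the convex function $w\mapsto|x-w|+|w-y'|$ equals $|x-y'|=|x-y|<m(y)$ at $w=x$, hence at the point $w_0\in S^1\cap[x,z']$ it is strictly smaller than $m(y)$, so that $m(y')\le|x-w_0|+|w_0-y'|<m(y)$, which is the claim.

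I expect the position estimate $|z'|>1$ to be the main obstacle: one must show the rotation angle $\measuredangle YXY'$ is too small for $z'$ to re-enter $\overline{\B^2}$ past the second point of $S^1(x,|x-z_1|)\cap S^1$, using the bisector description of $z_1$ together with $2|x|\sin\psi=|x-y|$, and it is here that non-collinearity of $y$ is essential — for $y$ collinear with the origin the minimizer degenerates to $z_1=x/|x|$ and $S^1(x,|x-z_1|)$ becomes internally tangent to $S^1$. That degenerate configuration is itself easy and guides the estimate: there $m(y)=2-2|x|+|x-y|$, while the competitor $z_0=x/|x|$ gives $m(y')\le(1-|x|)+\sqrt{(1-|x|)^2+|x-y|^2/|x|}<2-2|x|+|x-y|$ because $|x-y|<2|x|$, so $m(y)>m(y')$ there too; near such configurations one would combine this with the continuity of $s_{\B^2}$.
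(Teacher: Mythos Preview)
Your argument has a genuine gap at exactly the point you flag: you never prove $|z'|>1$. The clause ``I pick the sign so that $R$ moves $z_1$ \ldots\ in the direction of increasing distance from the origin'' does not secure this, since even if the rotation starts outward, the rotation angle $\measuredangle YXY'$ could carry $z_1$ past the second intersection of $S^1(x,|x-z_1|)$ with $S^1$ and back inside $\overline{\B^2}$; you give no comparison between $\measuredangle YXY'$ and the angular width of that outer arc. Your (correct) analysis of the collinear configuration lies outside the hypotheses of the lemma, so it is at best a consistency check and cannot close the gap.

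The paper avoids all of this by not rotating $z_1$. It uses the minimizer $z$ for the pair $(x,y)$ directly as a competitor for $(x,y')$: taking $y'$ to be the point of $S^1(0,|x|)\cap S^1(x,|x-y|)$ closer to $y$, one has $\measuredangle ZXY=\measuredangle ZXY'+\measuredangle Y'XY>\measuredangle ZXY'$, and then the law of cosines together with $|x-y|=|x-y'|$ gives $|z-y'|<|z-y|$, hence
\[
s_{\B^2}(x,y)=\frac{|x-y|}{|x-z|+|z-y|}<\frac{|x-y'|}{|x-z|+|z-y'|}\le s_{\B^2}(x,y').
\]
So the whole thing is a two-line computation once the right competitor is chosen; the rotation $R$, the ellipse $E$, and the estimate on $|z'|$ are all unnecessary.
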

\begin{proof}
Since $|y'|=|xe^{2\psi i}|=|x|$ and $|x-y'|=|x||1-e^{2\psi i}|=2|x|\sin(\psi)=|x-y|$, the point $y'$ is chosen from $S^1(|x|)\cap S^1(x,|x-y|)$. By symmetry, we can assume that $y'$ is the intersection point closer to $y$. Fix $z$ so that it gives the infimum $\inf_{z\in S^1}(|x-z|+|z-y|)$. If $\mu'=\measuredangle ZXY'$, then $\mu=\measuredangle ZXY=\mu'+\measuredangle Y'XY>\mu'$. Clearly, by the law of cosines,
\begin{align*}
s_{\B^2}(x,y)&=\frac{|x-y|}{|x-z|+|z-y|}
=\frac{|x-y|}{|x-z|+\sqrt{|x-y|^2+|x-z|^2-2|x-y||x-z|\cos(\mu)}}\\
&<\frac{|x-y|}{|x-z|+\sqrt{|x-y|^2+|x-z|^2-2|x-y||x-z|\cos(\mu')}} 
=\frac{|x-y'|}{|x-z|+|z-y'|}\\
&\leq s_{\B^2}(x,y'),
\end{align*}
so the lemma follows.
\end{proof}

Let us now focus on the results related to the Euclidean midpoint rotation.

\begin{proposition}\label{prop_triangles}
Consider two triangles $\triangle YXZ$ and $\triangle Y_0X_0Z_0$ with obtuse angles $\measuredangle YXZ$ and $\measuredangle Y_0X_0Z_0$. Let $k$ and $k_0$ be the midpoints of sides $XY$ and $X_0Y_0$, respectively. Suppose that $|x-y|=|x_0-y_0|$, $|k-z|\leq|k_0-z_0|$ and $\measuredangle ZKX\leq\measuredangle Z_0K_0X_0$. Then,
\begin{align*}
|x-z|+|z-y|\leq |x_0-z_0|+|z_0-y_0|.    
\end{align*}
\end{proposition}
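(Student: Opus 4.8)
The plan is to parametrize both triangles by the half-length of the fixed side and the two relevant angles, reducing the inequality to a monotonicity statement in one variable at a time. Write $a = |x-y|/2 = |x_0-y_0|/2$, so $|x-k| = |k-y| = a$ and likewise for the subscripted triangle. Put $d = |k-z|$, $d_0 = |k_0-z_0|$ with $d \le d_0$, and $\mu = \measuredangle ZKX$, $\mu_0 = \measuredangle Z_0K_0X_0$ with $\mu \le \mu_0$; since the angle at $X$ (resp. $X_0$) is obtuse, $z$ and $z_0$ lie on the far side so that $\measuredangle ZKY = \pi - \mu$. The law of cosines in $\triangle XKZ$ and $\triangle YKZ$ gives
\begin{align*}
|x-z| + |z-y| = \sqrt{a^2 + d^2 - 2ad\cos\mu} + \sqrt{a^2 + d^2 + 2ad\cos\mu},
\end{align*}
and similarly for the other triangle with $a, d, \mu$ replaced by $a, d_0, \mu_0$. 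So it suffices to show that the function $F(d,\mu) = \sqrt{a^2 + d^2 - 2ad\cos\mu} + \sqrt{a^2 + d^2 + 2ad\cos\mu}$ is nondecreasing in $d \ge 0$ and nondecreasing in $\mu \in [0, \pi/2]$.

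The $\mu$-monotonicity is exactly Proposition \ref{prop_functionf}: for fixed $d$, set $u = a^2 + d^2 > 0$ and $v = 2ad \ge 0$, and $F(d,\cdot)$ becomes the function $f(\mu) = \sqrt{u - v\cos\mu} + \sqrt{u + v\cos\mu}$, which is increasing on $[0,\pi/2]$ (if $v = 0$ it is constant, which still gives the needed inequality). Hence $F(d,\mu) \le F(d,\mu_0)$. For the $d$-monotonicity, fix $\mu$ and differentiate: writing $\phi(d) = F(d,\mu)$,
\begin{align*}
\phi'(d) = \frac{d - a\cos\mu}{\sqrt{a^2 + d^2 - 2ad\cos\mu}} + \frac{d + a\cos\mu}{\sqrt{a^2 + d^2 + 2ad\cos\mu}}.
\end{align*}
Both radicands are positive, and the numerators are $d \mp a\cos\mu$; when $d \ge a\cos\mu$ both terms are nonnegative and we are done, while if $d < a\cos\mu$ one checks that the positive second term dominates, since the term with the larger numerator also has the larger denominator in a controlled way — concretely, $\phi'(d) \ge 0$ is equivalent (after clearing denominators and squaring, both sides being real) to an inequality that reduces to $4a^2d^2\sin^2\mu \ge 0$. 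Alternatively, and more cleanly, one observes that $|x-z| + |z-y|$ with $x,y$ the endpoints of a segment of fixed length $2a$ centered at $k$ is, for fixed direction of the segment relative to $\overrightarrow{kz}$, a function of $|k-z|$ that increases as $z$ recedes, because the perpendicular from $z$ to the line $L(x,y)$ only gets longer; this is the same convexity fact underlying the ellipse with foci $x,y$.

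Combining, $F(d,\mu) \le F(d,\mu_0) \le F(d_0,\mu_0)$, which is precisely $|x-z| + |z-y| \le |x_0-z_0| + |z_0-y_0|$. I expect the main obstacle to be purely bookkeeping: confirming that the obtuseness of the angles at $X$ and $X_0$ forces $z, z_0$ into the configuration where $k \in [x,y]$ and the angle $\mu$ (rather than $\pi - \mu$) is the one in $[0,\pi/2]$ to which Proposition \ref{prop_functionf} applies, and handling the degenerate boundary cases $d = 0$ or $\cos\mu = 0$ (where one or both cosine terms vanish) without the argument breaking. The analytic core is genuinely elementary once the right one-variable reductions are in place.
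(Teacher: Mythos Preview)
Your proof is correct and follows essentially the same route as the paper: both express $|x-z|+|z-y|$ via the law of cosines in terms of the half-length $a$ (the paper's $r$), the median length $d$ (the paper's $m$), and the angle $\mu$, then appeal to Proposition~\ref{prop_functionf} for the $\mu$-monotonicity and handle the $d$-monotonicity separately, chaining $F(d,\mu)\le F(d,\mu_0)\le F(d_0,\mu_0)$ in the same order. You actually supply more than the paper does on the $d$-step (the paper simply asserts the second inequality), though your squared-difference computation has a small slip: it reduces to $4a^3d\cos\mu\,\sin^2\mu\ge 0$, not $4a^2d^2\sin^2\mu\ge 0$ --- harmless, since the sign conclusion is the same. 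Your remark that obtuseness at $X$ forces $\mu\in[0,\pi/2]$ (because $\triangle XKZ$ already has an obtuse angle at $X$) is exactly the justification the paper gives in one line.
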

\begin{proof}
Let $r=|x-k|=|x_0-k_0|$, $m=|k-z|$, $m_0=|k_0-z_0|$, $\mu=\measuredangle ZKX$ and $\mu_0=\measuredangle Z_0K_0X_0$, see Figure \ref{fig4}. By the law of cosines,
\begin{align*}
|x-z|+|z-y|&=\sqrt{r^2+m^2-2rm\cos(\mu)}+\sqrt{r^2+m^2+2rm\cos(\mu)},\\
|x_0-z_0|+|z_0-y_0|&=\sqrt{r^2+m_0^2-2rm_0\cos(\mu_0)}+\sqrt{r^2+m_0^2+2rm_0\cos(\mu_0)}.
\end{align*}
Furthermore, by Proposition \ref{prop_functionf}, the function $f:[0,\pi\slash2]\to\R$,
\begin{align*}
f(\mu)=\sqrt{u-v\cos(\mu)}+\sqrt{u+v\cos(\mu)},    
\end{align*}
where $u,v>0$, is increasing with respect to $\mu\in[0,\pi\slash2]$. Note that here $\mu,\mu_0\in[0,\pi\slash2]$ because the triangles already have obtuse angles $\measuredangle YXZ$ and $\measuredangle Y_0X_0Z_0$. Thus, it follows from $\mu\leq\mu_0$ and $m\leq m_0$ that
\begin{align*}
|x-z|+|z-y|&=\sqrt{r^2+m^2-2rm\cos(\mu)}+\sqrt{r^2+m^2+2rm\cos(\mu)}\\
&\leq\sqrt{r^2+m^2-2rm\cos(\mu_0)}+\sqrt{r^2+m^2+2rm\cos(\mu_0)}\\
&\leq\sqrt{r^2+m_0^2-2rm_0\cos(\mu_0)}+\sqrt{r^2+m_0^2+2rm_0\cos(\mu_0)}\\
&=|x_0-z_0|+|z_0-y_0|.
\end{align*}
\end{proof}

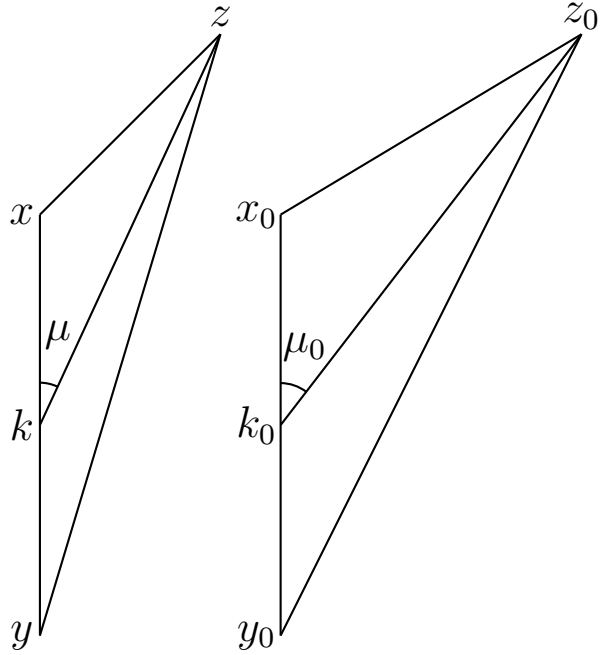
\begin{figure}[ht]
    \centering
    \begin{tikzpicture}[scale=8]
    \draw[thick] (0.6,0) -- (0.6,0.7);
    \draw[thick] (0.6,0.7) -- (0.9,1);
    \draw[thick] (0.6,0) -- (0.9,1);
    \draw[thick] (0.6,0.35) -- (0.9,1);
    \node[scale=1.4] at (0.57,0) {$y$};
    \node[scale=1.4] at (0.57,0.7) {$x$};
    \node[scale=1.4] at (0.9,1.03) {$z$};
    \node[scale=1.4] at (0.57,0.35) {$k$};
    \draw[thick] (0.629,0.414) arc (65:90:0.07);
    \node[scale=1.4] at (0.63,0.5) {$\mu$};
    \draw[thick] (1,0) -- (1,0.7);
    \draw[thick] (1,0.7) -- (1.5,1);
    \draw[thick] (1,0) -- (1.5,1);
    \draw[thick] (1,0.35) -- (1.5,1);
    \node[scale=1.4] at (0.96,0) {$y_0$};
    \node[scale=1.4] at (0.96,0.7) {$x_0$};
    \node[scale=1.4] at (1.5,1.03) {$z_0$};
    \node[scale=1.4] at (0.96,0.35) {$k_0$};
    \draw[thick] (1.043,0.405) arc (52:90:0.07);
    \node[scale=1.4] at (1.04,0.48) {$\mu_0$};
    \end{tikzpicture}
    \caption{The triangles  $\triangle YXZ$ and $\triangle Y_0X_0Z_0$ of Proposition \ref{prop_triangles}}
    \label{fig4}
\end{figure}

\begin{theorem}\label{thm_x0y0}
For all $x,y\in\B^2$,
\begin{align*}
s_{\B^2}(x,y)&\geq s_{\B^2}(x_0,y_0)
\geq\frac{|x-y|}{\sqrt{|x-y|^2+(2-|x+y|)^2}}.
\end{align*}
\end{theorem}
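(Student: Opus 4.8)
The plan is to prove the two inequalities separately. For the first inequality $s_{\B^2}(x,y)\geq s_{\B^2}(x_0,y_0)$, I would invoke Proposition \ref{prop_triangles}. Recall that $x_0,y_0\in S^1(k,r)$ lie on the line through the origin, so among all placements of a pair of antipodal points of the circle $S^1(k,r)$ the pair $\{x_0,y_0\}$ is the one whose connecting chord passes through $o$; for this pair the midpoint $k_0=k$ is unchanged, $|x_0-y_0|=|x-y|$, and the distance from $k$ to the ``extremal'' boundary point $z$ is as in the original configuration. The point is that if $z\in\partial\B^2=S^1$ realizes (or nearly realizes) $\inf_{w\in S^1}(|x-w|+|w-y|)$ for the original pair, then rotating the chord about $k$ into the position $[x_0,y_0]$ through the origin makes the angle $\measuredangle ZKX$ no larger while keeping $|k-z|$ fixed, so by Proposition \ref{prop_triangles} the sum $|x_0-z|+|z-y_0|$ is no larger than $|x-z|+|z-y|$. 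Hence $s_{\B^2}(x_0,y_0)\geq |x-y|/(|x_0-z|+|z-y_0|)^{-1}$... more carefully: taking the $z$ that realizes the infimum for $(x_0,y_0)$ and comparing, one gets $s_{\B^2}(x,y)\geq s_{\B^2}(x_0,y_0)$. The care needed is to make sure the triangle $\triangle YXZ$ genuinely has an obtuse angle at $X$ (or to reduce to that case by symmetry / by choosing the labelling so that the relevant angle is the one bisected per Theorem \ref{find_zinB}); this monotonicity-under-rotation is exactly the content Proposition \ref{prop_triangles} was set up to deliver, so this step should be routine once the configuration is aligned correctly.

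For the second inequality, the key observation is that $x_0,y_0$ are at the same distance from the origin, $|x_0|=|y_0|$, so they fall into the ``same distance from the origin'' special case. By Remark \ref{rmk_onlyonez} (or Theorem \ref{find_zinB}), the infimum $\inf_{z\in S^1}(|x_0-z|+|z-y_0|)$ is attained at $z_0=(x_0+y_0)/|x_0+y_0|$, the point of $S^1$ on the bisecting radius. Writing $|x_0+y_0| = 2|k|$ and using that $x_0,y_0,z_0$ together with $o$ lie in a simple planar configuration (the midpoint $k$ of $[x_0,y_0]$ lies on the segment $[o,z_0]$, at distance $|k|$ from $o$ and $r=|x_0-k|$ perpendicular to $[o,z_0]$), the law of cosines / Pythagoras gives
\begin{align*}
|x_0-z_0| = |z_0-y_0| = \sqrt{r^2+(1-|k|)^2},
\end{align*}
hence
\begin{align*}
s_{\B^2}(x_0,y_0) = \frac{|x_0-y_0|}{2\sqrt{r^2+(1-|k|)^2}} = \frac{|x-y|}{2\sqrt{r^2+(1-|k|)^2}}.
\end{align*}
Now $2r=|x-y|$ and $2|k|=|x+y|$, so $4r^2 = |x-y|^2$ and $2(1-|k|) = 2-|x+y|$, giving $4r^2+4(1-|k|)^2 = |x-y|^2+(2-|x+y|)^2$, which is exactly the denominator claimed. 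Thus equality holds in the second inequality, i.e. the stated lower bound is precisely $s_{\B^2}(x_0,y_0)$.

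The main obstacle is the first step: making Proposition \ref{prop_triangles} applicable requires carefully setting up which vertex carries the obtuse angle and verifying that rotating the chord $[x,y]$ about $k$ into the position $[x_0,y_0]$ through the origin only decreases the angle at the relevant vertex while leaving $|k-z|$ fixed — one must check that the $z$ on $S^1$ giving the infimum behaves correctly under this rotation (in particular that $|k-z|$ is the same, which holds because $z$ and the circle $S^1$ are fixed and only the chord rotates about $k$, but the angle $\measuredangle ZKX$ does change). Once the geometry is pinned down, both pieces follow from results already established in the excerpt.
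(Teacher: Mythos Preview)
Your proposal has genuine gaps in both parts.

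\textbf{First inequality.} You describe $x_0,y_0$ as the pair ``whose connecting chord passes through $o$'', but that is the pair $x_1,y_1$. By Definition~\ref{def_emr}, $|x_0|=|y_0|$, so the chord $[x_0,y_0]$ is \emph{perpendicular} to $L(0,k)$ at $k$; you get this right in the second part, but the first part is built on the wrong picture. More seriously, your angle comparison goes the wrong way and your choice of which $z$ to fix is not workable. If you fix $z$ as the minimizer for $(x,y)$ and claim the rotated sum $|x_0-z|+|z-y_0|$ is smaller, Proposition~\ref{prop_triangles} would yield $s_{\B^2}(x_0,y_0)\geq s_{\B^2}(x,y)$, the opposite of what you want. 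The paper's argument is more delicate: it takes the minimizer $z_0$ for $(x_0,y_0)$, splits into the case where $z_0=1$ is unique (handled directly by Proposition~\ref{prop_functionf} with $\nu=\measuredangle XK1$) and the case of two minimizers; in the latter it takes the minimizer $z$ for $(x,y)$, argues via Theorem~\ref{find_zinB} that $0\leq\arg(z)\leq\arg(z_0)$, and hence that both $|k-z|\leq|k-z_0|$ and $\measuredangle ZKX\leq\measuredangle Z_0KX_0$, which is the correct input to Proposition~\ref{prop_triangles}. You have not identified the need for this case split nor for comparing two different boundary points.

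\textbf{Second inequality.} Your computation with $z_0=(x_0+y_0)/|x_0+y_0|$ gives the right expression, but your claim that equality holds is false: Remark~\ref{rmk_onlyonez} only applies when the minimizer is unique, and by Theorem~\ref{smetricinB_forconjugate} there is a regime ($|x_0-\tfrac12|>\tfrac12$, i.e.\ $|q|<t^2$ in later notation) where $s_{\B^2}(x_0,y_0)=|x_0|$, strictly larger than $r/\sqrt{r^2+(1-|k|)^2}$. Your argument does salvage the inequality $\geq$, since evaluating at the test point $z=1$ bounds the infimum from above, but you should present it that way (or simply cite Theorem~\ref{smetricinB_forconjugate}, as the paper does) rather than asserting equality.
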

\begin{proof}
Fix $k=(x+y)\slash2$ and $r=|x-k|$. Suppose that $k\neq0$, for otherwise $s_{\B^2}(x,y)=s_{\B^2}(x_0,y_0)$ holds trivially. Without loss of generality, let $0<k<1$ and $\measuredangle XKZ=\nu\in[0,\pi\slash2]$. Now, $\measuredangle YKZ=\pi+\nu$, $x_0=k+ri$ and $y_0=k-ri$. There are two possible cases; either the infimum $\inf_{z_0\in S^1}(|x_0-z_0|+|z_0-y_0|)$ is given by one point $z_0\in S^1$ or there are two possible points $z_0\in S^1$.  

Suppose first that the infimum $\inf_{z_0\in S^1}(|x_0-z_0|+|z_0-y_0|)$ is given by only one point. By Remark \ref{rmk_onlyonez}, this point must be $z_0=1$. Fix $u=r^2+(1-k)^2$ and $v=2r(1-k)$ and consider the function $f$ of Proposition \ref{prop_functionf} for a variable $\nu$. Now, we will have
\begin{align*}
\inf_{z\in S^1}(|x-z|+|z-y|)&\leq|x-1|+|1-y|
=f(\nu)\leq f(\pi\slash2)=|x_0-1|+|1-y_0|\\
&=\inf_{z_0\in S^1}(|x_0-z|+|z-y_0|),  
\end{align*}
from which the inequality $s_{\B^2}(x,y)\geq s_{\B^2}(x_0,y_0)$ follows.

Consider yet the case where there are two points giving the infimum $\inf_{z_0\in S^1}(|x_0-z_0|+|z_0-y_0|)$. By symmetry, we can fix $z_0$ so that $0<\arg(z_0)\leq\pi\slash2$. Now, the infimum $\inf_{z\in S^1}(|x-z|+|z-y|)$ is given by some point $z$ such that $0\leq\arg(z)\leq\arg(z_0)$. If $x,y$ are collinear with the origin, by Lemma \ref{lem_smetricinB_collinear} and Corollary \ref{cor_diam_diskinB},
\begin{align*}
s_{\B^2}(x,y)=\frac{|x-y|}{2-|x+y|}=\frac{r}{1-k}=s_{\B^2}(\overline{B}^n(k,r))\geq s_{\B^2}(x_0,y_0).    
\end{align*}

If $x,y,0$ are non-collinear instead, the triangles $\triangle YXZ$ and $\triangle Y_0X_0Z_0$ exists. The sides $XY$ and $X_0Y_0$ are both the length of $2r$ and have a common midpoint $k$. It follows from Theorem \ref{find_zinB} and the inequality $0<\arg(z)\leq\arg(z_0)\leq\pi\slash2$ that angles $\measuredangle YXZ$ and $\measuredangle Y_0X_0Z_0$ are obtuse, $|k-z|\leq |k-z_0|$ and $\measuredangle ZKX\leq\measuredangle Z_0KX_0$. By Proposition \ref{prop_triangles},
\begin{align*}
|x-z|+|z-y|\leq |x_0-z_0|+|z_0-y_0|,    
\end{align*}
so the inequality $s_{\B^2}(x,y)\geq s_{\B^2}(x_0,y_0)$ follows.

Thus, $s_{\B^2}(x,y)\geq s_{\B^2}(x_0,y_0)$ holds in every cases and, by Theorem \ref{smetricinB_forconjugate},
\begin{align*}
s_{\B^2}(x_0,y_0)\geq\frac{r}{\sqrt{r^2+(1-k)^2}}
=\frac{|x-y|}{\sqrt{|x-y|^2+(2-|x+y|)^2}},
\end{align*}
which proves the latter part of the theorem.
\end{proof}

\begin{theorem}\label{thm_x1y1}
Let $x,y\in\B^2$ with $k=(x+y)\slash2$ and $r=|x-k|$. If $r+k<1$,
\begin{align*}
s_{\B^2}(x,y)\leq s_{\B^2}(x_1,y_1)=\frac{|x-y|}{2-|x+y|}<1. 
\end{align*}
\end{theorem}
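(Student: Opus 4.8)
The plan is to read this statement as an immediate corollary of the $s_{\B^2}$-diameter formula for closed balls contained in $\B^2$, namely Corollary \ref{cor_diam_diskinB}. The key observation is that both quantities in the chain coincide with the $s_{\B^2}$-diameter of the disk $J=\overline{B}^2(k,r)$: the point $s_{\B^2}(x,y)$ is bounded above by it because $x,y$ lie in $J$, and $s_{\B^2}(x_1,y_1)$ equals it on the nose. So the whole argument reduces to computing two things and comparing. Before starting I would dispose of the degenerate case $k=0$ (i.e.\ $x=-y$): there $x_1,y_1$ are not uniquely determined, but any admissible choice is collinear with the origin and symmetric about it, so the asserted equality holds trivially, and henceforth I assume $k\neq0$.

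The steps, in order, are as follows. First, from the hypothesis $r+|k|<1$ every point of $J=\overline{B}^2(k,r)$ has modulus at most $|k|+r<1$, so $J\subset\B^2$ and Corollary \ref{cor_diam_diskinB} applies, giving $s_{\B^2}(J)=r/(1-|k|)$. Second, since $x,y\in S^1(k,r)\subset J$, the definition of $s_{\B^2}$-diameter gives $s_{\B^2}(x,y)\leq s_{\B^2}(J)$. Third, I identify the right-hand side: by Definition \ref{def_emr} the points $x_1,y_1$ lie on the ray through $0$ and $k$ with $|x_1|=|k|+r$ and $|y_1|=|k|-r$, hence they are collinear with the origin, $|x_1-y_1|=2r=|x-y|$ and $|x_1+y_1|=2|k|=|x+y|$; since $x_1\in\B^2$ by hypothesis, the equality case of Lemma \ref{lem_smetricinB_collinear} yields
\begin{align*}
s_{\B^2}(x_1,y_1)=\frac{|x_1-y_1|}{2-|x_1+y_1|}=\frac{2r}{2-2|k|}=\frac{r}{1-|k|}.
\end{align*}
Chaining these gives $s_{\B^2}(x,y)\leq s_{\B^2}(J)=s_{\B^2}(x_1,y_1)=|x-y|/(2-|x+y|)$, and the strict bound $<1$ follows because $|x-y|=2r<2-2|k|=2-|x+y|$, again by $r+|k|<1$.

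The main point requiring care is not any computation — the substantive work was already done in Theorem \ref{thm_supInRing} and Corollaries \ref{cor_diskdiameter}--\ref{cor_diam_diskinB} — but rather checking that the hypothesis $r+|k|<1$ is exactly what is needed for $\overline{B}^2(k,r)\subset\B^2$ and for $x_1\in\B^2$, and correctly handling the degenerate configuration $k=0$ noted above. Everything else is a direct invocation of results stated earlier in the paper.
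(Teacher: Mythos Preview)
Your proof is correct, but the paper takes a more direct route. Instead of invoking Corollary~\ref{cor_diam_diskinB}, the paper simply applies Lemma~\ref{lem_smetricinB_collinear}, which already gives the inequality $s_{\B^2}(x,y)\le |x-y|/(2-|x+y|)$ for \emph{arbitrary} $x,y\in\B^2$, and then notes that for the collinear pair $x_1,y_1$ (with $|x_1-y_1|=|x-y|$ and $|x_1+y_1|=|x+y|$) this bound is an equality. So the entire argument is one line, and never needs the disk $J$ or its $s$-diameter.

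Your route goes through the heavier machinery of Theorem~\ref{thm_supInRing} and Corollaries~\ref{cor_diskdiameter}--\ref{cor_diam_diskinB} to bound $s_{\B^2}(x,y)$ by $s_{\B^2}(J)=r/(1-|k|)$, and then uses Lemma~\ref{lem_smetricinB_collinear} only to evaluate $s_{\B^2}(x_1,y_1)$. Both arguments land on the same number $r/(1-|k|)$, and yours has the conceptual bonus of identifying $s_{\B^2}(x_1,y_1)$ as the exact $s_{\B^2}$-diameter of the disk $\overline{B}^2(k,r)$, but the paper's version is shorter because Lemma~\ref{lem_smetricinB_collinear} does the whole job at once.
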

\begin{proof}
If $r+k<1$, then $x_1,y_1\in\B^2$ and, by Lemma \ref{lem_smetricinB_collinear},
\begin{align*}
s_{\B^n}(x,y)\leq\frac{|x-y|}{2-|x+y|}
=\frac{|x_1-y_1|}{2-|x_1+y_1|}=s_{\B^2}(x_1,y_1).    
\end{align*}
\end{proof}

\section{Hyperbolic Midpoint Rotation}\label{section_hmr}

In this section, we consider the hyperbolic midpoint rotation. The idea behind it is the same as in the Euclidean midpoint rotation, for our aim is still to rotate the points around their midpoint in order to estimate their triangular ratio distance. However, now the rotation is done by using the hyperbolic geometry of the unit circle instead of the simpler Euclidean method. 

\begin{definition}\label{def_hmr}
\emph{Hyperbolic midpoint rotation.} Choose distinct points $x,y\in\B^2$. Let $q$ be their hyperbolic midpoint and $R=\rho_{\B^2}(x,q)=\rho_{\B^2}(y,q)$. Let $x_2,y_2\in S_\rho^1(q,R)$ so that $|x_2|=|y_2|$ but $x_2\neq y_2$. Fix then $x_3,y_3\in S_\rho^1(q,R)$ so that $x_3,y_3$ are collinear with the origin and $|y_1|<|q|<|x_1|$. See Figure \ref{fig2}.
\end{definition}

The main result of this section is the inequality
\begin{align*}
s_{\B^2}(x_2,y_2)\leq s_{\B^2}(x,y)\leq s_{\B^2}(x_3,y_3).      
\end{align*}
This inequality is well-defined for all distinct $x,y\in\B^2$ because the values of $s_{\B^2}(x_2,y_2)$ and $s_{\B^2}(x_3,y_3)$ are always defined. The first part of the inequality is proved in Theorem \ref{thm_x2y2} and the latter part in Theorem \ref{thm_x3y3}, and the formula for the value of $s_{\B^2}(x_2,y_2)$ is in Theorem \ref{thm_x2y2_value}.
Note that, according to numerical tests, the hyperbolic midpoint rotation gives better estimates for $s_{\B^2}(x,y)$ than the Euclidean midpoint rotation or the point pair function, see Conjecture \ref{conj_hmr}.

\begin{figure}[ht]
    \centering
    \begin{tikzpicture}[scale=8]
    \draw[thick] (1,0) arc (0:90:1);
    \draw[thick] (0.4,0.4) circle (0.3cm);
    \draw (0,0) circle (0.01cm);
    \node[scale=1.3] at (0,0.05) {$o$};
    \draw (0.467,0.467) circle (0.01cm);
    \node[scale=1.3] at (0.51,0.47) {$q$};
    \draw (0.612,0.612) circle (0.01cm);
    \node[scale=1.3] at (0.612,0.661) {$x_3$};
    \draw (0.188,0.188) circle (0.01cm);
    \node[scale=1.3] at (0.188,0.241) {$y_3$};
    \draw[thick, dashed] (0,0) -- (0.6121,0.6121);
    \draw (0.347,0.695) circle (0.01cm);
    \node[scale=1.3] at (0.347,0.73) {$x_2$};
    \draw (0.695,0.347) circle (0.01cm);
    \node[scale=1.3] at (0.74,0.347) {$y_2$};
    \draw[thick, dashed] (0.347,0.695) arc (190:260:0.427);
    \draw (0.5065,0.6795) circle (0.01cm);
    \node[scale=1.3] at (0.5065,0.72) {$x$};
    \draw (0.56,0.145) circle (0.01cm);
    \node[scale=1.3] at (0.565,0.19) {$y$};
    \draw[thick, dashed] (0.5065,0.6795) arc (159.5:212:0.6036);
    \end{tikzpicture}
    \caption{Hyperbolic midpoint rotation}
    \label{fig2}
\end{figure}
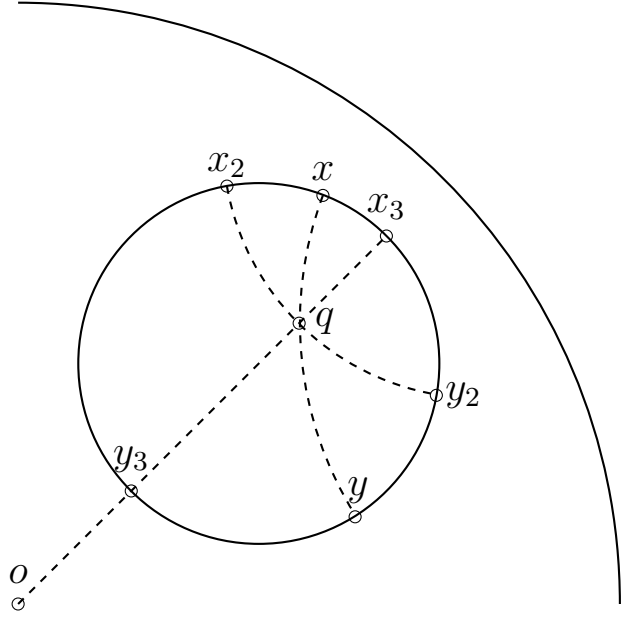

\begin{lemma}\label{lem_x2y2_formulas}
Choose $x,y\in\B^2$ so that their hyperbolic midpoint is $0<q<1$. Let $t={\rm th}(R\slash2)={\rm th}(\rho_{\B^2}(x,y)\slash4)$. Then
\begin{align*}
x_2=\frac{q(1+t^2)}{1+q^2t^2}+\frac{t(1-q^2)}{1+q^2t^2}i\text{ and } y_2=\overline{x_2}.
\end{align*}
\end{lemma}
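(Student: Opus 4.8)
The plan is to apply Lemma \ref{lem_rhoball} to the hyperbolic ball $B_\rho^2(q,R)$ and then read off the two points on its boundary circle that lie symmetrically about the real axis. By Lemma \ref{lem_rhoball}, the hyperbolic ball $B_\rho^2(q,R)$ with $q\in(0,1)$ real equals the Euclidean ball $B^2(j,h)$ with
\begin{align*}
j=\frac{q(1-t^2)}{1-q^2t^2},\qquad h=\frac{(1-q^2)t}{1-q^2t^2},\qquad t={\rm th}(R/2),
\end{align*}
so its center $j$ is on the positive real axis and its Euclidean radius is $h$. Since the points $x_2,y_2$ are chosen on $S_\rho^1(q,R)=S^1(j,h)$ with $|x_2|=|y_2|$ and $x_2\neq y_2$, they must be the two intersection points of this circle with a circle centered at the origin; by symmetry of $S^1(j,h)$ about the real axis, this forces $x_2$ and $y_2$ to be complex conjugates, reflections of each other across the real axis. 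Writing $x_2=j+hi\cdot s$ is not quite right; rather, the pair of points on $S^1(j,h)$ equidistant from $0$ and symmetric about $\R$ are exactly the endpoints of the vertical chord through... no — I would instead argue directly that the only pair $\{x_2,\overline{x_2}\}\subset S^1(j,h)$ with a common modulus consists of the two points obtained by intersecting $S^1(j,h)$ with the vertical line, but since there is a one-parameter family of origin-centered circles meeting $S^1(j,h)$ in conjugate pairs, the condition $|x_2|=|y_2|$ alone with $x_2=\overline{y_2}$ is automatically satisfied by any conjugate pair on the circle; the remaining freedom is fixed by Definition \ref{def_hmr} together with Figure \ref{fig2}.

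The cleanest route is therefore: first establish that $x_2$ and $y_2$ are complex conjugates (this is the geometric content of "$|x_2|=|y_2|$, $x_2\neq y_2$" once one notes $S_\rho^1(q,R)$ is symmetric about the real axis and $q$ is real, so the hyperbolic rotation carrying $\{x,y\}$ to the symmetric position about the origin-line must place them symmetrically about $\R$); then compute the specific conjugate pair. I would do the computation by noting that $x_2=\overline{y_2}$ lies on $S^1(j,h)$, so $|x_2-j|^2=h^2$, and that $x_2$ is, by the construction in Definition \ref{def_hmr}, the image under the appropriate rotation — but more efficiently, I would simply verify the claimed formula by direct substitution: set
\begin{align*}
a=\frac{q(1+t^2)}{1+q^2t^2},\qquad b=\frac{t(1-q^2)}{1+q^2t^2},
\end{align*}
and check that $|(a+bi)-j|^2=h^2$ with $j,h$ as above. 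This is a routine algebraic identity: expand $(a-j)^2+b^2$ and confirm it equals $\big((1-q^2)t/(1-q^2t^2)\big)^2$; the factor $(1+q^2t^2)(1-q^2t^2)$ appears in both numerators and the identity reduces to a polynomial identity in $q$ and $t$.

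I would also remark that $\rho_{\B^2}(0,x_2)=\rho_{\B^2}(0,y_2)$ is automatic from $x_2=\overline{y_2}$ (hyperbolic distance from the origin depends only on modulus), which is consistent with the "same distance from the origin" special case that makes $s_{\B^2}(x_2,y_2)$ computable later. Finally, since $t={\rm th}(R/2)$ and $R=\rho_{\B^2}(x,y)/2$, we have $t={\rm th}(\rho_{\B^2}(x,y)/4)$, giving the stated second description of $t$.

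The main obstacle is the justification that $x_2=\overline{y_2}$, i.e. that the "equal-modulus symmetric position" for a hyperbolic rotation of $\{x,y\}$ about the real point $q$ is the conjugate-symmetric one. The point is that a hyperbolic rotation fixing $q\in(0,1)$ is a Möbius self-map of $\B^2$ fixing $q$; the one that sends the pair $\{x,y\}$ to a pair of equal modulus, together with the normalization implicit in Figure \ref{fig2}, must send it to $\{x_2,\overline{x_2}\}$ because reflection across $\R$ is the unique isometry fixing every point of the geodesic through $0$ and $q$, and the equal-modulus condition pins the pair to that geodesic's perpendicular through $q$. Once this symmetry is in hand, everything else is the substitution check described above, which I would present compactly rather than in full.
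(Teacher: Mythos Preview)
Your verification-by-substitution is incomplete. You propose to check only that the candidate $a+bi$ lies on $S^1(j,h)$; but, as you yourself observe, \emph{every} conjugate pair on $S^1(j,h)$ has equal modulus, so this check does not single out the intended pair. The extra condition that actually pins $x_2,y_2$ down is that $q$ remain their hyperbolic midpoint, i.e.\ that $x_2,y_2$ lie on the hyperbolic geodesic through $q$ perpendicular to the real geodesic $L(0,q)$. Your closing paragraph gestures at this, but the substitution you describe never tests it.

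The paper's proof handles precisely this point: it realises the perpendicular geodesic through $q$ as the Euclidean circle $S^1(c,d)$ orthogonal to $S^1$ with $c=(1+q^2)/(2q)$ and $d=(1-q^2)/(2q)$ (so that $q\in S^1(c,d)$), and then solves the two-circle intersection $S^1(j,h)\cap S^1(c,d)$ to obtain $u=a$ and $r=b$ explicitly. If you want to keep your verification style, you must add a second check---either that $(a-c)^2+b^2=d^2$ for the $c,d$ above, or equivalently that $\mathrm{th}\bigl(\rho_{\B^2}(a+bi,a-bi)/2\bigr)=2t/(1+t^2)$---so that both defining constraints are confirmed. With that addition your approach would be correct and slightly shorter than the paper's derivation; without it, the argument does not determine $x_2$.
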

\begin{proof}
By Lemma \ref{lem_rhoball}, $S_\rho^1(q,R)=S^1(j,h)$ with
\begin{align*}
j=\frac{q(1-t^2)}{1-q^2t^2}\text{ and }
h=\frac{(1-q^2)t}{1-q^2t^2}.
\end{align*}
To find $x_2$ and $y_2$, we need to find the intersection points of $S^1(j,h)$ and $S^1(c,d)$, where $S^1(c,d)\perp S^1$ and $c>1$. Now, $c^2=(q+d)^2=1+d^2$, from which it follows that 
\begin{align*}
d=\frac{1-q^2}{2q}\text{ and }c=\frac{1+q^2}{2q}.    
\end{align*}
Clearly, $x_2=\overline{y_2}$ since both $j,c\in\R$. Let $x_2=u+ri$ and $y_2=u-ri$. Now, $h^2=r^2+(u-j)^2$ and $d^2=r^2+(c-u)^2$. Thus,
\begin{align*}
&h^2-(u-j)^2=d^2-(c-u)^2\quad\Leftrightarrow\quad
h^2-u^2+2ju-j^2=d^2-c^2+2cu-u^2\quad\Leftrightarrow\\
&u=\frac{h^2-j^2-d^2+c^2}{2(c-j)}.
\end{align*}
Since
\begin{align*}
h^2-j^2
&=\frac{(1-q^2)^2t^2}{(1-q^2t^2)^2}-\frac{q^2(1-t^2)^2}{(1-q^2t^2)^2}
=\frac{(t^2-q^2)(1-q^2t^2)}{(1-q^2t^2)^2}
=\frac{t^2-q^2}{1-q^2t^2},\\
-d^2+c^2
&=-\frac{(1-q^2)^2}{4q^2}+\frac{(1+q^2)^2}{4q^2}
=\frac{4q^2}{4q^2}=1,\\
h^2-j^2-d^2+c^2
&=\frac{t^2-q^2}{1-q^2t^2}+1
=\frac{(1+t^2)(1-q^2)}{1-q^2t^2},\\
2(c-j)
&=2(\frac{1+q^2}{2q}-\frac{q(1-t^2)}{1-q^2t^2})
=\frac{(1+q^2)(1-q^2t^2)-2q^2(1-t^2)}{q(1-q^2t^2)}\\
&=\frac{(1-q^2)(1+q^2t^2)}{q(1-q^2t^2)},
\end{align*}
we will have
\begin{align*}
u=\frac{h^2-j^2-d^2+c^2}{2(c-j)}
=\frac{q(1+t^2)(1-q^2)(1-q^2t^2)}{(1-q^2)(1-q^2t^2)(1+q^2t^2)}
=\frac{q(1+t^2)}{1+q^2t^2}.
\end{align*}
From the equality $h^2=r^2+(u-j)^2$, it follows that
\begin{align*}
r&=\sqrt{h^2-(u-j)^2}
=\sqrt{\frac{(1-q^2)^2t^2}{(1-q^2t^2)^2}-\left(\frac{q(1+t^2)}{1+q^2t^2}-\frac{q(1-t^2)}{1-q^2t^2}\right)^2}\\
&=\sqrt{\frac{(1-q^2)^2t^2}{(1-q^2t^2)^2}-\left(\frac{q(1+t^2)(1-q^2t^2)-q(1-t^2)(1+q^2t^2)}{1-q^4t^4}\right)^2}\\
&=\sqrt{\frac{(1-q^2)^2t^2}{(1-q^2t^2)^2}-\left(\frac{2qt^2(1-q^2)}{1-q^4t^4}\right)^2}
=\sqrt{\frac{(1-q^2)^2t^2(1+q^2t^2)^2}{(1-q^4t^4)^2}-\frac{4q^2t^4(1-q^2)^2}{(1-q^4t^4)^2}}\\
&=\sqrt{\frac{t^2(1-q^2)^2(1-q^2t^2)^2}{(1-q^4t^4)^2}}
=\sqrt{\frac{t^2(1-q^2)^2}{(1+q^2t^2)^2}}
=\frac{t(1-q^2)}{1+q^2t^2}.
\end{align*}
\end{proof}

\begin{theorem}\label{thm_x2y2_value}
For all $x,y\in\B^2$ with a hyperbolic midpoint $q\in\B^2\backslash\{0\}$ and $t={\rm th}(\rho_{\B^2}(x,y)\slash4)$,
\begin{align*}
s_{\B^2}(x_2,y_2)&=\sqrt{\frac{|q|^2+t^2}{1+|q|^2t^2}}\text{ if }|q|<t^2,\\
s_{\B^2}(x_2,y_2)&=\frac{t(1+|q|)}{\sqrt{(1+t^2)(1+|q|^2t^2)}}
\leq\sqrt{\frac{|q|^2+t^2}{1+|q|^2t^2}}
\text{ otherwise}.
\end{align*}
\end{theorem}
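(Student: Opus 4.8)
The plan is to reduce to the normalized situation of Lemma \ref{lem_x2y2_formulas} and then apply Theorem \ref{smetricinB_forconjugate}. Rotations of $\B^2$ about the origin preserve Euclidean distances, the unit sphere $S^1$, the hyperbolic metric $\rho_{\B^2}$ and hence hyperbolic midpoints and the whole construction of $x_2,y_2$; since $s_{\B^2}$ depends only on Euclidean distances and $S^1=\partial\B^2$, the quantity $s_{\B^2}(x_2,y_2)$ is unchanged under such a rotation. So I would rotate so that $q$ becomes the real number $|q|\in(0,1)$, write $q=|q|$ for brevity, and note $t={\rm th}(\rho_{\B^2}(x,y)\slash4)\in(0,1)$ because $x\neq y$. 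By Lemma \ref{lem_x2y2_formulas} we then have $x_2=h+ki$, $y_2=\overline{x_2}$, with $h=q(1+t^2)\slash(1+q^2t^2)>0$ and $k=t(1-q^2)\slash(1+q^2t^2)>0$, so Theorem \ref{smetricinB_forconjugate} is applicable.

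First I would compute $|x_2|^2=h^2+k^2$: expanding the numerator $q^2(1+t^2)^2+t^2(1-q^2)^2$ and regrouping gives the factorization $(q^2+t^2)(1+q^2t^2)$, hence $|x_2|^2=(q^2+t^2)\slash(1+q^2t^2)$ (which is $<1$ since $(1-q^2)(1-t^2)>0$, so $x_2\in\B^2$). The dichotomy in Theorem \ref{smetricinB_forconjugate} turns on whether $|x_2-1/2|>1/2$, equivalently whether $h<h^2+k^2=|x_2|^2$. Substituting the formulas for $h$ and $|x_2|^2$ and clearing the common positive denominator, this becomes $q(1+t^2)<q^2+t^2$, which factors as $q(1-q)<t^2(1-q)$ and, since $q<1$, is equivalent to $q<t^2$. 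Thus the two cases of the statement are exactly the two cases of Theorem \ref{smetricinB_forconjugate}, and when $q<t^2$ that theorem gives $s_{\B^2}(x_2,y_2)=|x_2|=\sqrt{(q^2+t^2)\slash(1+q^2t^2)}$, the first asserted formula.

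In the remaining case $q\ge t^2$, Theorem \ref{smetricinB_forconjugate} gives $s_{\B^2}(x_2,y_2)=k\slash\sqrt{(1-h)^2+k^2}$. Here I would compute $1-h=(1-q)(1-qt^2)\slash(1+q^2t^2)$, and then, using $(1-q^2)^2=(1-q)^2(1+q)^2$, obtain $(1-h)^2+k^2=(1-q)^2\big[(1-qt^2)^2+t^2(1+q)^2\big]\slash(1+q^2t^2)^2$; expanding and regrouping the bracket yields $(1+t^2)(1+q^2t^2)$, so $(1-h)^2+k^2=(1-q)^2(1+t^2)\slash(1+q^2t^2)$. Dividing $k$ by the square root and cancelling the factor $1-q$ (via $1-q^2=(1-q)(1+q)$) produces $s_{\B^2}(x_2,y_2)=t(1+q)\slash\sqrt{(1+t^2)(1+q^2t^2)}$, the second asserted formula. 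The trailing inequality $t(1+q)\slash\sqrt{(1+t^2)(1+q^2t^2)}\le\sqrt{(q^2+t^2)\slash(1+q^2t^2)}$ is just the second clause of Theorem \ref{smetricinB_forconjugate} ($s_{\B^2}(x,\overline{x})\le|x|$); alternatively, squaring and cross-multiplying reduces it to $(q-t^2)^2\ge0$.

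Since everything after the normalization is bookkeeping, there is no genuine obstacle. The only points requiring care are spotting the right factorizations so the rational expressions collapse to $(q^2+t^2)(1+q^2t^2)$ and $(1-q)^2(1+t^2)(1+q^2t^2)$, and checking that the case split $q<t^2$ versus $q\ge t^2$ is correctly aligned with the hypothesis $|x_2-1/2|>1/2$ of Theorem \ref{smetricinB_forconjugate}.
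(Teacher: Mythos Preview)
Your proof is correct and follows essentially the same route as the paper: normalize so that $q\in(0,1)$, invoke Lemma \ref{lem_x2y2_formulas}, compute $|x_2|^2$ and $(1-h)^2+k^2$ via the same factorizations, translate the case split of Theorem \ref{smetricinB_forconjugate} into $q<t^2$ versus $q\ge t^2$, and read off the two formulas. The only differences are cosmetic: you spell out the rotation-invariance justification and the check $x_2\in\B^2$, and you offer the alternative verification $(q-t^2)^2\ge0$ for the trailing inequality.
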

\begin{proof}
Suppose without loss of generality that $0<q<1$, $x_2=u+ri$ and $y_2=\overline{x_2}$. From Lemma \ref{lem_x2y2_formulas}, it follows that 
\begin{align*}
|x_2|&=\sqrt{u^2+r^2}=\sqrt{\frac{q^2(1+t^2)^2}{(1+q^2t^2)^2}+\frac{t^2(1-q^2)^2}{(1+q^2t^2)^2}}  
=\sqrt{\frac{(q^2+t^2)(1+q^2t^2)}{(1+q^2t^2)^2}}\\
&=\sqrt{\frac{q^2+t^2}{1+q^2t^2}},
\end{align*}
\begin{align*}
&|x_2-\frac{1}{2}|=|u+ri-\frac{1}{2}|>\frac{1}{2}\quad\Leftrightarrow\quad
(u-\frac{1}{2})^2+r^2>\frac{1}{4}\quad\Leftrightarrow\quad
u^2+r^2>u\quad\Leftrightarrow\\
&\frac{q^2(1+t^2)^2}{(1+q^2t^2)^2}+\frac{t^2(1-q^2)^2}{(1+q^2t^2)^2}>\frac{q(1+t^2)}{1+q^2t^2}\quad\Leftrightarrow\\
&q^2(1+t^2)^2+t^2(1-q^2)^2=(t^2+q^2)(1+q^2t^2)>q(1+t^2)(1+q^2t^2)\quad\Leftrightarrow\\
&t^2+q^2>q(1+t^2)\quad\Leftrightarrow\quad
q(1-q)<t^2(1-q)\quad\Leftrightarrow\quad
q<t^2,
\end{align*}
and 
\begin{align*}
(1-u)^2+r^2
&=\left(1-\frac{q(1+t^2)}{1+q^2t^2}\right)^2+\frac{t^2(1-q^2)^2}{(1+q^2t^2)^2}
=\frac{(1-q)^2(1+t^2)(1+q^2t^2)}{(1+q^2t^2)^2}\\
&=\frac{(1-q)^2(1+t^2)}{1+q^2t^2}\quad\Rightarrow\\
\frac{r}{\sqrt{(1-u)^2+r^2}}
&=\frac{t(1-q^2)\sqrt{1+q^2t^2}}{(1-q)(1+q^2t^2)\sqrt{1+t^2}}
=\frac{t(1+q)}{\sqrt{(1+t^2)(1+q^2t^2)}}.
\end{align*}
The result follows now from Theorem \ref{smetricinB_forconjugate}.
\end{proof}

\begin{theorem}\label{gendis_result}\emph{\cite[Prop. 3.1, p. 447]{vw}}
The hyperbolic midpoint of $J[0,b]$ is $[0,b]\cap J[c,d]$ for all $c,d\in S^1$ such that $b\in L(c,d)$ and $c,d$ are non-collinear with the origin.
\end{theorem}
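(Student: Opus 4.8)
The plan is to use the rotation-invariance of the hyperbolic metric to reduce to a one-variable situation on the real axis, and then to locate the intersection point by comparing the power of the point $b$ with respect to $S^1$ and with respect to the circle carrying the geodesic $J[c,d]$.

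First I would normalise. Since $\rho_{\B^2}$ is invariant under rotations about the origin, and the whole configuration — the segment $J[0,b]$, its hyperbolic midpoint, the chord $L(c,d)$, and $S^1$ — rotates rigidly, there is no loss of generality in assuming $b=\beta\in(0,1)$; here $\beta\neq0$ is automatic, because $0\in L(c,d)$ would force $c$ and $d$ to be antipodal, contrary to the hypothesis that they are non-collinear with the origin. Geodesics through the origin are diameters, so $J[0,b]=[0,\beta]$ lies on the real axis, and applying the hyperbolic midpoint formula of \cite{wvz} with $x=0$, $y=\beta$ (for which the Ahlfors bracket $A[0,\beta]=1$) gives the midpoint $q=\beta/(1+\sqrt{1-\beta^2})$. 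Because $c,d$ are non-collinear with the origin, $J[c,d]$ is the arc inside $\B^2$ of the unique circle $C=S^1(p,\sigma)$ through $c$ and $d$ that is orthogonal to $S^1$, and this orthogonality is the relation $|p|^2=1+\sigma^2$.

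The crucial observation is that $b,c,d$ are collinear, and $c,d$ are exactly the two points in which the line $L(c,d)$ meets \emph{both} $S^1$ and $C$; hence the power of $b$ with respect to $S^1$ equals the power of $b$ with respect to $C$, since each is the signed product of the distances from $b$ to $c$ and to $d$ measured along $L(c,d)$. Writing this out, $|b|^2-1=|b-p|^2-\sigma^2$, and substituting $\sigma^2=|p|^2-1$ collapses it to the linear relation $\operatorname{Re}(\overline{b}\,p)=1$, that is, $\operatorname{Re}p=1/\beta$. Intersecting $C$ with the real axis then gives $(x-1/\beta)^2=1/\beta^2-1$, so the real-axis points of $C$ are $x_\pm=(1\pm\sqrt{1-\beta^2})/\beta$; here $x_+>1$ lies outside $\overline{\B}^2$, while $x_-=(1-\sqrt{1-\beta^2})/\beta$ satisfies $x_+x_-=1$, hence $x_-\in(0,1)$, and $1-\sqrt{1-\beta^2}<\beta^2$ (equivalently $\sqrt{1-\beta^2}<1$) gives $x_-<\beta$, so $x_-\in(0,\beta)\subset[0,b]$. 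Since also $x_-\in C\cap\B^2=J[c,d]$ and two distinct geodesics meet in at most one point, $[0,b]\cap J[c,d]=\{x_-\}$; and the elementary identity $(1-\sqrt{1-\beta^2})/\beta=\beta/(1+\sqrt{1-\beta^2})$ gives $x_-=q$, which is exactly the claim.

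The bookkeeping in the last step — identifying $x_-$ with $q$ and checking $x_-<\beta$ — is routine, and the one structural point that makes the argument run is the equal-power identity, which is precisely what converts the hypothesis $b\in L(c,d)$ into the single constraint $\operatorname{Re}p=1/\beta$ on the geodesic circle, regardless of how $c$ and $d$ are chosen. If one prefers to avoid the midpoint formula, the same quadratic appears when one verifies directly that $\operatorname{th}\tfrac12\rho_{\B^2}(0,x_-)=x_-=\bigl|(x_--\beta)/(1-\beta x_-)\bigr|=\operatorname{th}\tfrac12\rho_{\B^2}(x_-,\beta)$, so that $\rho_{\B^2}(0,x_-)=\rho_{\B^2}(x_-,\beta)$; I would run the power-of-a-point version as the main argument and note this alternative. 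I would also remark that the only degenerate configuration, $c$ and $d$ antipodal (in which $C$ degenerates to a diameter), is exactly the one ruled out by the hypothesis, so $C$ is a genuine circle throughout.
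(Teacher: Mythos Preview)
Your argument is correct. The paper itself does not prove this statement: it is quoted from \cite[Prop.~3.1]{vw} and used as a black box in the proof of Theorem~\ref{thm_uvcounter}, so there is no ``paper's own proof'' to compare against.

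That said, the approach you chose is clean and self-contained. The power-of-a-point observation is exactly the right device: because $c,d$ lie on both $S^1$ and the orthogonal circle $C$ carrying $J[c,d]$, the secant $L(c,d)$ yields equal powers of $b$ with respect to both circles, and this collapses the two conditions $|p|^2=1+\sigma^2$ and $\beta\in L(c,d)$ into the single linear constraint $\operatorname{Re}p=1/\beta$. The cancellation of $(\operatorname{Im}p)^2$ when you intersect $C$ with the real axis, so that the real intersection points depend only on $\beta$, is what makes the result independent of the particular chord through $b$; you handle this correctly. The remaining identifications ($x_-\in(0,\beta)$, $x_-=q$, uniqueness of the intersection of two distinct geodesics) are all verified. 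The alternative verification via $\operatorname{th}\tfrac12\rho_{\B^2}(0,x_-)=\operatorname{th}\tfrac12\rho_{\B^2}(x_-,\beta)$ that you mention is indeed a valid substitute for invoking the midpoint formula of \cite{wvz}.
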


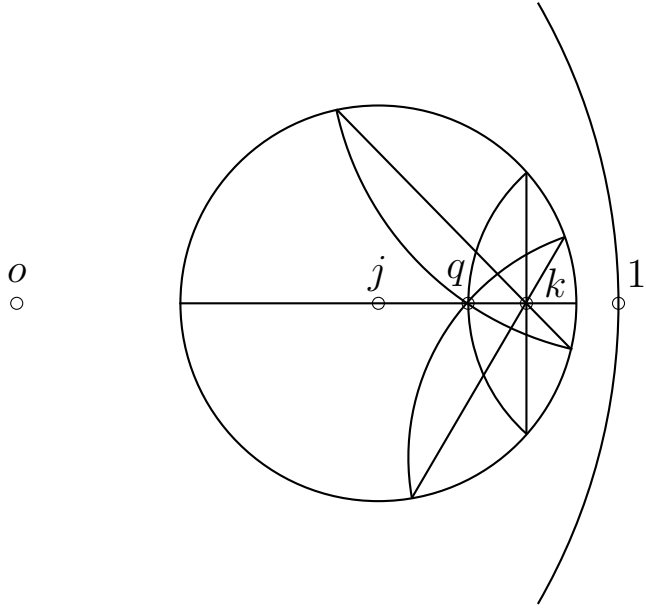
\begin{figure}[ht]
    \centering
    \begin{tikzpicture}[scale=8]
    \draw[thick] (0.866,-0.5) arc (-30:30:1);
    \draw (0,0) circle (0.01cm);
    \node[scale=1.3] at (0,0.05) {$o$};
    \draw (0.75,0) circle (0.01cm);
    \draw[thick] (0.601,0) circle (0.329cm);
    \draw (0.601,0) circle (0.01cm);
    \draw[thick] (0.847,0.217) -- (0.847,-0.217);
    \draw[thick] (0.847,0.217) arc (132:228:0.292);
    \draw (0.847,0) circle (0.01cm);
    \draw (1,0) circle (0.01cm);
    \draw[thick] (0.27,0) -- (0.93,0);
    \node[scale=1.3] at (1.03,0.05) {$1$};
    \draw[thick] (0.531,0.323) -- (0.92,-0.075);
    \draw[thick] (0.531,0.323) arc (192:257:0.521);
    \draw[thick] (0.91,0.11) -- (0.657,-0.323);
    \draw[thick] (0.91,0.11) arc (109.5:190:0.389);
    \node[scale=1.3] at (0.6,0.05) {$j$};
    \node[scale=1.3] at (0.73,0.05) {$q$};
    \node[scale=1.3] at (0.895,0.035) {$k$};
    \end{tikzpicture}
    \caption{Hyperbolic circle $S^1_\rho(q,R)$ with the points $j,q,k$ of Theorem \ref{thm_uvcounter}}
    \label{fig3}
\end{figure}

\begin{theorem}\label{thm_uvcounter}
If hyperbolic segments $J[u_i,v_i]\subset\B^2$, $i=1,...,n$, are of the same hyperbolic length and have a common hyperbolic midpoint $q$, all their Euclidean counterparts $[u_i,v_i]$ intersect at the same point.
\end{theorem}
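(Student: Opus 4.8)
The plan is to reduce the statement to a single clean fact: a hyperbolic circle $S^1_\rho(q,R)$ is a Euclidean circle $S^1(j,h)$ (by Lemma \ref{lem_rhoball}), and any chord $[u,v]$ of that Euclidean circle whose endpoints are at equal hyperbolic distance from $q$ — equivalently, whose hyperbolic perpendicular bisector passes through $q$ — must pass through one fixed point determined by $q$, $j$, $h$ alone. So the first step is to normalize: by applying a rotation of $\B^2$ about the origin (which preserves $\rho_{\B^2}$, Euclidean lines through $0$, and the unit circle) I may assume $q = |q| \in [0,1)$ lies on the positive real axis; the degenerate case $q=0$ is trivial since then all the chords are diameters of a circle centered at $0$, meeting at $0$.

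Next I would set $t = \mathrm{th}(R/2)$ and invoke Lemma \ref{lem_rhoball} to write $S^1_\rho(q,R) = S^1(j,h)$ with $j = q(1-t^2)/(1-q^2t^2)$ and $h = (1-q^2)t/(1-q^2t^2)$. Now I claim the common intersection point is
\begin{align*}
p = \frac{q(1+t^2)}{1+q^2t^2},
\end{align*}
which the reader will recognize as the real part of the point $x_2$ computed in Lemma \ref{lem_x2y2_formulas}; indeed the special chord $[x_2,y_2]$ with $|x_2|=|y_2|$ is vertical and crosses the real axis exactly at $p$, and any chord collinear with the origin passes through $q$ and through $j$ — wait, more to the point, through the intersection of the real axis with $S^1(j,h)$ — so I should instead argue uniformly. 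The cleanest route: a Euclidean chord $[u,v]$ of $S^1(j,h)$ has its endpoints equidistant (hyperbolically) from $q$ if and only if $q$ lies on the hyperbolic perpendicular bisector of $[u,v]$, which by the symmetry of the hyperbolic metric is the hyperbolic geodesic through the Euclidean midpoint-direction that is $\rho$-orthogonal to the chord. I would rather avoid this and argue directly: parametrize the chords of $S^1(j,h)$ through a candidate point $p$ on the real axis and show that the condition "$u$ and $v$ are hyperbolic reflections of each other across the line $oq$" forces a unique $p$. Concretely, using $\mathrm{th}(\rho_{\B^2}(q,w)/2) = |q-w|/|1-\bar q w|$ with $q$ real, the equation $\rho_{\B^2}(q,u)=\rho_{\B^2}(q,v)$ becomes $|q-u|/|1-qu| = |q-v|/|1-qv|$; combined with $u,v \in S^1(j,h)$ this is a system whose solution locus for the pair, as the chord varies, sweeps out all chords through one fixed $p$.

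The main obstacle I anticipate is precisely pinning down this fixed point $p$ in a way that is manifestly independent of which chord is chosen, rather than merely checking it for the two distinguished chords $[x_2,y_2]$ and $[x_3,y_3]$ already named in Definitions \ref{def_hmr}; checking those two only shows the lines meet there if they meet at a common point at all, so I need a genuine uniform argument. I would handle this by the Möbius-transformation trick: apply the hyperbolic isometry $\varphi(w) = (w-q)/(1-qw)$ sending $q \mapsto 0$. Under $\varphi$ the hyperbolic circle $S^1_\rho(q,R)$ becomes the origin-centered Euclidean circle $S^1(0,t)$, and the hypothesis "$u_i,v_i$ equidistant from $q$" becomes "$\varphi(u_i),\varphi(v_i)$ equidistant from $0$", i.e. the chord $[\varphi(u_i),\varphi(v_i)]$ is a \emph{diameter} of $S^1(0,t)$ — hence every such chord passes through $0=\varphi(q)$. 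But $\varphi$ is a Möbius map, so it sends the Euclidean line $L(u_i,v_i)$ to a circle or line through $\varphi(u_i),\varphi(v_i)$; since those two points are antipodal on $S^1(0,t)$ their $\varphi$-image line is not generally straight, so I cannot simply pull back "$0$" to a single point this way. The fix is to note that all the pulled-back chords $[u_i,v_i]$, being preimages under the \emph{same} Möbius map $\varphi^{-1}$ of the pencil of lines through $0$, form a pencil of circles through the two common points $\varphi^{-1}(0)=q$ and $\varphi^{-1}(\infty)=1/q$; but we want the pencil of \emph{lines} $L(u_i,v_i)$, which are the radical-type family, and a pencil of circles through two points $q, 1/q$ has the property that the \emph{chords} (secant lines) of $S^1(j,h)$ cut by them all pass through the pole of the line $L(q,1/q)=$ real axis with respect to $S^1(j,h)$ — that pole is the desired fixed point $p$, computed as $p = $ (the harmonic conjugate of $\infty$), giving exactly $p = q(1+t^2)/(1+q^2t^2)$ after substituting $j,h$. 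So the real content is the projective fact: \emph{secant lines of a circle $C$ whose endpoints are conjugate under inversion in a fixed circle $C'$ all pass through the pole of the radical axis of $C,C'$ with respect to $C$}; once that is in hand the proof is two substitutions.

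I expect the write-up to be about one page: normalization (one paragraph), the inversion-in-$S^1$ reformulation of "equidistant from $q$" as "$u^* = v$-reflection" plus the pole/polar identification of $p$ (the crux), and a short verification that plugging $j,h$ from Lemma \ref{lem_rhoball} into the pole formula yields $p=q(1+t^2)/(1+q^2t^2)$, matching $\mathrm{Re}(x_2)$ from Lemma \ref{lem_x2y2_formulas} and consistent with the obvious fact that the diameter $[x_3,y_3]$ of $S^1(j,h)$ also contains $p$.
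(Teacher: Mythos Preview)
Your core idea --- that the hyperbolic geodesics $J[u_i,v_i]$, being arcs of circles orthogonal to $S^1$ passing through $q$, extend to full circles through both $q$ and $1/\bar q$ and hence form a coaxial pencil --- is correct and leads to a valid proof genuinely different from the paper's. Each chord $L(u_i,v_i)$ is the radical axis of $S^1(j,h)$ and the $i$-th pencil circle, so the radical-center theorem forces all of them to be concurrent on the pencil's own radical axis $L(q,1/q)$; intersecting with the vertical chord $L(x_2,y_2)$ from Lemma~\ref{lem_x2y2_formulas} then pins down the common point as $p=q(1+t^2)/(1+q^2t^2)$, exactly as you say.

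Two details in your write-up are wrong, though neither is fatal. First, the common point is \emph{not} the pole of $L(q,1/q)$ with respect to $S^1(j,h)$: since $j$ is real (Lemma~\ref{lem_rhoball}), that line is a diameter of $S^1(j,h)$ and its pole is at infinity. The right description is simply ``radical center of $S^1(j,h)$ with any two circles of the pencil''. Second, your closing summary that the endpoints are ``conjugate under inversion in a fixed circle $C'$'' mischaracterizes the relation: $u_i\mapsto v_i$ is the hyperbolic half-turn $\varphi^{-1}\circ(-\mathrm{id})\circ\varphi$ about $q$, a holomorphic M\"obius map, not an (anti-holomorphic) inversion. Drop those two claims and the radical-center argument stands on its own.

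The paper takes a different route: it invokes the cited external result Theorem~\ref{gendis_result} to show that for the point $k=L(0,q)\cap L(u_1,v_1)$ one has $q$ equal to the hyperbolic midpoint of $J[j,k]$; since $j$ and $q$ are fixed, this determines $k$ independently of the chord. Your approach trades that citation for the elementary radical-axis machinery, making the proof self-contained; the paper's is shorter once the external lemma is granted.
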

\begin{proof}
Choose distinct points $u_1,v_1\in\B^2$ that are non-collinear with the origin. Let $q$ be their hyperbolic midpoint, $R=\rho_{\B^2}(u_1,v_1)$ and $k=L(0,q)\cap L(u_1,v_1)$. Fix $j,h$ as in Lemma \ref{lem_rhoball}. Now, $u_1,v_1\in S^1(j,h)$, $J[u_1,v_1]\perp S^1(j,h)$ and $u_1,v_1,j$ are non-collinear. It follows from Theorem \ref{gendis_result} that the hyperbolic midpoint of $J[j,k]$ is $[j,k]\cap J[u_1,v_1]$. Since $0,j,q$ are collinear and $k\in L(0,q)$, $[j,k]\cap J[u_1,v_1]=L(0,q)\cap J[u_1,v_1]=q$. Thus, $q$ is the hyperbolic midpoint of $J[j,k]$. It follows that $k$ only depends on $q$ and $j$, so the intersection point $L(0,q)\cap L(u_i,v_i)$ must be same for all indexes $i$, as can be seen in Figure \ref{fig3}. If $u_i,v_i$ are collinear with the origin for some index $i$, then $k\in[u_i,v_i]$ trivially. Thus, the theorem follows.
\end{proof}

\begin{corollary}\label{cor_kpoint}
For all $x,y\in\B^2$, there is a point 
\begin{align*}
k=L(x,y)\cap L(x_2,y_2)\cap L(x_3,y_3).    
\end{align*}
\end{corollary}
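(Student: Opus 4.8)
The plan is to obtain Corollary~\ref{cor_kpoint} as an immediate consequence of Theorem~\ref{thm_uvcounter}. The point of the hyperbolic midpoint rotation is precisely that, starting from distinct $x,y\in\B^2$ with hyperbolic midpoint $q$ and $R=\rho_{\B^2}(x,q)=\rho_{\B^2}(y,q)$, the three hyperbolic segments $J[x,y]$, $J[x_2,y_2]$ and $J[x_3,y_3]$ all have hyperbolic length $2R$ and share the common hyperbolic midpoint $q$. Granting this, Theorem~\ref{thm_uvcounter} applied with $n=3$ says that the Euclidean counterparts $[x,y]$, $[x_2,y_2]$ and $[x_3,y_3]$ pass through one common point; calling it $k$ and using $[a,b]\subset L(a,b)$, we get $k\in L(x,y)\cap L(x_2,y_2)\cap L(x_3,y_3)$, which is the claim. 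That this triple intersection is a single point, as written, is automatic: at least two of the three lines are distinct, since $L(x_2,y_2)$ is perpendicular to $L(0,q)=L(x_3,y_3)$ when $q\neq0$; the case $q=0$ forces $y=-x$ and all the segments to pass through the origin, so $k=0$, and the case $x=y$ is excluded, the rotation being defined only for distinct points.

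So the only thing that really needs to be said is why $q$ remains the hyperbolic midpoint of the rotated segments. For $x_3,y_3$ this is immediate: being collinear with the origin, they lie on a diameter, which is a hyperbolic geodesic, and on it the two points of $S_\rho^1(q,R)$ are hyperbolically equidistant from $q$ on opposite sides of $q$, so $q$ is their hyperbolic midpoint and $\rho_{\B^2}(x_3,y_3)=2R$. For $x_2,y_2$ I would invoke Lemma~\ref{lem_x2y2_formulas} and its proof: normalising so that $0<q<1$, the hyperbolic circle $S_\rho^1(q,R)=S^1(j,h)$ has real centre, two points of it of equal modulus are complex conjugates, and the orthogonal circle $S^1(c,d)\perp S^1$ through them has $q=c-d$ as its leftmost point; hence $q$ lies on the geodesic arc $S^1(c,d)\cap\B^2$ joining $x_2$ and $y_2$, and by the reflection symmetry in $\R$ it is equidistant from both endpoints, so $q$ is the hyperbolic midpoint of $J[x_2,y_2]$ and $\rho_{\B^2}(x_2,y_2)=2R$.

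I do not foresee a genuine obstacle: the corollary is essentially a restatement of Theorem~\ref{thm_uvcounter} for the particular triple of segments produced by the hyperbolic midpoint rotation, and the midpoint-preservation of the rotation is already contained in its construction and in Lemma~\ref{lem_x2y2_formulas}. The only mild care needed is the bookkeeping of the degenerate configurations ($q=0$ and $x=y$), handled trivially as indicated above.
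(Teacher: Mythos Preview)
Your proposal is correct and matches the paper's approach exactly: the paper's proof is simply ``Follows from Theorem~\ref{thm_uvcounter}.'' You have supplied the implicit verification that $q$ is the common hyperbolic midpoint of all three segments $J[x,y]$, $J[x_2,y_2]$, $J[x_3,y_3]$ (which the paper treats as evident from Definition~\ref{def_hmr}) and handled the degenerate cases, but the underlying argument is identical.
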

\begin{proof}
Follows from Theorem \ref{thm_uvcounter}.
\end{proof}

\begin{theorem}\label{thm_theta_xy}
For all $x,y\in\B^2$ that are non-collinear with the origin and have a hyperbolic midpoint $q$, the distance $|x-y|$ is decreasing with respect to the smaller angle between $L(x,y)$ and $L(0,q)$.
\end{theorem}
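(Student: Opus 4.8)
The plan is to turn the statement into an elementary fact about chords of a fixed circle through a fixed point. First I would normalize the situation: since the hyperbolic geodesics through the origin are exactly the Euclidean diameters of $\B^2$, the hypothesis that $x,y$ are non-collinear with the origin forces their hyperbolic midpoint to satisfy $q\neq0$; and since rotations about the origin are hyperbolic isometries preserving both $|x-y|$ and the angle between $L(x,y)$ and $L(0,q)$, I may assume $0<q<1$, so that $L(0,q)=\R$. Set $R=\rho_{\B^2}(x,y)$ and $t={\rm th}(R\slash2)$.

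Next I would lay out the geometric skeleton. By Lemma \ref{lem_rhoball}, the hyperbolic circle $S_\rho^1(q,R)$ equals the Euclidean circle $S^1(j,h)$ with $j=q(1-t^2)\slash(1-q^2t^2)$ and $h=(1-q^2)t\slash(1-q^2t^2)$; in particular $j\in\R$, so the center of this circle lies on $L(0,q)$, and $x,y\in S^1(j,h)$. By Corollary \ref{cor_kpoint} (which rests on Theorem \ref{thm_uvcounter}), all hyperbolic segments with hyperbolic midpoint $q$ and hyperbolic length $R$ have Euclidean counterparts passing through one common point $k$, and in fact $k=L(x,y)\cap L(x_3,y_3)=L(x,y)\cap L(0,q)$, since $x_3,y_3$ are collinear with the origin. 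Thus, as the segment is ``rotated'', the pair $x,y$ runs over the endpoints of all chords of the fixed circle $S^1(j,h)$ through the fixed point $k\in L(0,q)$.

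The core step is then planar trigonometry. Writing $m=|j-k|$ and letting $\theta\in[0,\pi\slash2]$ be the smaller angle between $L(x,y)$ and $L(0,q)=L(j,k)$, dropping the perpendicular from $j$ onto $L(x,y)$ produces a right triangle with hypotenuse $[j,k]$ which shows that the distance from the center $j$ to the chord equals $m\sin\theta$; the Pythagorean theorem for the chord then gives $|x-y|=2\sqrt{h^2-m^2\sin^2\theta}$. Since $h$ and $m$ depend only on $q$ and $R$, not on $\theta$, and $\sin^2$ is increasing on $[0,\pi\slash2]$, the right-hand side is a decreasing function of $\theta$, which is the assertion. I do not anticipate a genuine obstacle: the only care needed is to confirm that the common point $k$ from Corollary \ref{cor_kpoint} really is $L(x,y)\cap L(0,q)$ (so that the $\theta$ in the chord formula is exactly the angle named in the statement), to note that the formula is meaningful precisely for $m\sin\theta\leq h$, i.e. on exactly the set of angles realized by such configurations, and to dispose of the degenerate subcase $k=j$, in which every chord through $k$ is a diameter and $|x-y|\equiv2h$ is constant.
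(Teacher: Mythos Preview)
Your proposal is correct and follows essentially the same route as the paper's own proof: pass to the Euclidean circle $S^1(j,h)=S^1_\rho(q,\cdot)$ with center $j$ on $L(0,q)$, use Corollary \ref{cor_kpoint} to locate the fixed pivot $k\in L(0,q)$ through which all chords pass, and conclude via the chord-length formula $|x-y|=2\sqrt{h^2-|j-k|^2\sin^2\theta}$. One cosmetic slip: you set $R=\rho_{\B^2}(x,y)$ but then refer to $S_\rho^1(q,R)$; the radius should be $\rho_{\B^2}(x,q)=\rho_{\B^2}(x,y)/2$ (so $t={\rm th}(\rho_{\B^2}(x,y)/4)$), though this is immaterial since the explicit formulas for $j,h$ play no role in the argument.
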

\begin{proof}
Consider a hyperbolic circle $S^1_\rho(q,R)$, where $R=\rho_{\B^2}(x,q)$, and let $S^1(j,h)$ be the corresponding Euclidean circle. By Lemma \ref{lem_rhoball}, we see that the points $0,j,q$ are collinear. Fix $k$ as in Corollary \ref{cor_kpoint} and let $u=|j-k|$. Denote $\theta=\measuredangle(L(x,y),L(0,q))=\measuredangle(L(x,y),L(j,k))\in[0,\pi\slash2]$. Clearly, the distance $u$ does not depend on the angle $\theta$. It follows that $|x-y|=2\sqrt{h^2-u^2\sin^2(\theta)}$ is decreasing with respect to $\theta$.
\end{proof}

\begin{corollary}\label{cor_xygrows}
For all $x,y\in\B^2$, $|x_2-y_2|\leq|x-y|\leq|x_3-y_3|$.
\end{corollary}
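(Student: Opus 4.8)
The plan is to read off all three lengths from the single identity established inside the proof of Theorem~\ref{thm_theta_xy}: a chord of the hyperbolic circle $S^1_\rho(q,R)$ having $q$ as its hyperbolic midpoint and whose supporting Euclidean line meets $L(0,q)$ at an angle $\theta\in[0,\pi/2]$ has Euclidean length $2\sqrt{h^2-u^2\sin^2\theta}$, where $S^1(j,h)=S^1_\rho(q,R)$ and $u=|j-k|$ with $k$ the intersection point of Corollary~\ref{cor_kpoint}. Everything will come down to identifying the angle $\theta$ for each of the three pairs $x,y$; $x_2,y_2$; $x_3,y_3$.

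First I would dispose of the case $q=0$, where $x=-y$ and each of the pairs $x_2,y_2$ and $x_3,y_3$ is an antipodal pair on the Euclidean circle $S^1(0,\text{th}(\rho_{\B^2}(x,y)/4))$; all three distances then equal $2\,\text{th}(\rho_{\B^2}(x,y)/4)$ and there is nothing to prove. So assume $q\neq0$. A rotation $z\mapsto e^{i\alpha}z$ about the origin is simultaneously a Euclidean isometry and a hyperbolic isometry fixing $0$, and it carries the whole configuration (the pair $x,y$, its hyperbolic midpoint $q$, and the derived pairs $x_2,y_2$, $x_3,y_3$) to the corresponding configuration for $e^{i\alpha}x,e^{i\alpha}y$, preserving all three distances; choosing $\alpha=-\arg q$ we may therefore assume $q\in(0,1)$, so $L(0,q)=\R$.

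By Definition~\ref{def_hmr} together with Corollary~\ref{cor_kpoint}, the segments $[x,y]$, $[x_2,y_2]$, $[x_3,y_3]$ are all chords of the same hyperbolic circle $S^1_\rho(q,R)$ with $R=\rho_{\B^2}(x,y)/2$, with common hyperbolic midpoint $q$ and common intersection point $k$; hence the quantities $h$ and $u$ above are the same for all three, and only the angles differ. Since $x_3,y_3$ are collinear with the origin and have hyperbolic midpoint $q$, the segment $[x_3,y_3]$ lies on a line through $0$ that contains $q$, i.e.\ on $L(0,q)$, so its angle with $L(0,q)$ is $0$ and $|x_3-y_3|=2h$. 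By Lemma~\ref{lem_x2y2_formulas}, $x_2$ and $y_2=\overline{x_2}$ have the form $a\pm ri$ with $r>0$, so $L(x_2,y_2)$ is vertical, its angle with $L(0,q)=\R$ is $\pi/2$, and $|x_2-y_2|=2\sqrt{h^2-u^2}$. For the original pair $x,y$ the angle $\theta$ lies in $[0,\pi/2]$, hence $0\le u^2\sin^2\theta\le u^2$, so $2\sqrt{h^2-u^2}\le 2\sqrt{h^2-u^2\sin^2\theta}\le 2h$, which is precisely $|x_2-y_2|\le|x-y|\le|x_3-y_3|$.

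The only delicate point is the assertion that $h$ and, above all, $u=|j-k|$ are genuinely shared by the three chords; this is exactly what Theorem~\ref{thm_uvcounter} and Corollary~\ref{cor_kpoint} supply, by pinning the intersection point $k$ as a function of $q$ and $j$ alone. A harmless special case is $x,y$ collinear with the origin, where $\theta=0$ and the length formula is read as $|x-y|=2h=|x_3-y_3|$; otherwise $x,y$ are non-collinear with $0$ and Theorem~\ref{thm_theta_xy} applies verbatim. No further computation is needed.
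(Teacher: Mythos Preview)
Your proof is correct and follows the same route as the paper, which simply says the corollary ``follows from Theorem~\ref{thm_theta_xy}.'' You have spelled out the details the paper leaves implicit: reducing to $q\in(0,1)$, identifying the angles $\theta=\pi/2$ for $x_2,y_2$ (via Lemma~\ref{lem_x2y2_formulas}) and $\theta=0$ for $x_3,y_3$, and handling the collinear and $q=0$ edge cases explicitly.
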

\begin{proof}
Follows from Theorem \ref{thm_theta_xy}.
\end{proof}

\begin{corollary}
For all $x,y\in\B^2$,
\begin{align*}
|x-y|\leq\frac{2(1-|q|^2)t}{1-|q|^2t^2}\leq2{\rm th}(\rho_{\B^2}(x,y)\slash4).    
\end{align*}
where $q$ is the hyperbolic midpoint of $J[x,y]$, and $t={\rm th}(\rho_{\B^2}(x,y)\slash4)$.
\end{corollary}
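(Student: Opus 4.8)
The statement is a direct corollary of Theorem~\ref{thm_theta_xy} (via Corollary~\ref{cor_xygrows}) together with the explicit formula for the hyperbolic ball in Lemma~\ref{lem_rhoball}, so the plan is short. First I would dispose of the trivial case $x=y$, where $t=0$ and all three quantities vanish, and assume $x\neq y$ from now on, so that $0<t={\rm th}(\rho_{\B^2}(x,y)\slash4)<1$.

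For the first inequality, I would invoke Corollary~\ref{cor_xygrows}, which gives $|x-y|\leq|x_3-y_3|$, reducing the claim to computing $|x_3-y_3|$. By Definition~\ref{def_hmr}, the points $x_3,y_3$ lie on the hyperbolic circle $S^1_\rho(q,R)$ with $R=\rho_{\B^2}(x,y)\slash2$ and are collinear with the origin. By Lemma~\ref{lem_rhoball}, $B^2_\rho(q,R)=B^2(j,h)$ with $j=q(1-t^2)\slash(1-|q|^2t^2)$ and $h=(1-|q|^2)t\slash(1-|q|^2t^2)$, since ${\rm th}(R\slash2)={\rm th}(\rho_{\B^2}(x,y)\slash4)=t$. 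As already observed in the proof of Theorem~\ref{thm_theta_xy}, the points $0$, $j$, $q$ are collinear, so the line $L(0,q)$ passes through the center $j$ of $S^1(j,h)$; hence $[x_3,y_3]$ is a diameter of that circle and $|x_3-y_3|=2h=2(1-|q|^2)t\slash(1-|q|^2t^2)$. This gives the first inequality, and it also covers the degenerate case in which $x,y$ are collinear with the origin, since Corollary~\ref{cor_xygrows} holds unconditionally there.

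For the second inequality I would simply note that $\frac{2(1-|q|^2)t}{1-|q|^2t^2}\leq 2t$ is equivalent to $1-|q|^2\leq 1-|q|^2t^2$, i.e.\ to $|q|^2t^2\leq|q|^2$, which holds because $t^2<1$; and $2t=2{\rm th}(\rho_{\B^2}(x,y)\slash4)$ by the definition of $t$. There is essentially no obstacle: the only points requiring care are the bookkeeping of the degenerate configurations mentioned above and the observation that $L(0,q)$ meets $S^1(j,h)$ in a diameter, both of which are immediate from the cited results.
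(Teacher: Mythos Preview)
Your proposal is correct and follows essentially the same route as the paper: invoke Corollary~\ref{cor_xygrows} to bound $|x-y|$ by $|x_3-y_3|$, identify $|x_3-y_3|=2h$ via Lemma~\ref{lem_rhoball}, and then observe the elementary inequality $2h\leq 2t$. Your write-up is in fact slightly more detailed than the paper's, since you spell out why $[x_3,y_3]$ is a diameter of $S^1(j,h)$ (collinearity of $0,j,q$) and handle the trivial case $x=y$ explicitly.
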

\begin{proof}
By fixing $h$ as in Lemma \ref{lem_rhoball}, we will have
\begin{align*}
|x_3-y_3|=2h=\frac{2(1-|q|^2)t}{1-|q|^2t^2}
\leq2t
=2{\rm th}(\rho_{\B^2}(x,y)\slash4),
\end{align*}
so the result follows from Corollary \ref{cor_xygrows}.
\end{proof}

\begin{remark}
The inequality $|x-y|\leq2{\rm th}(\rho_{\B^2}(x,y)\slash4)$ can be also found in \cite[(4.25), p. 57]{hkvbook}.
\end{remark}

\begin{theorem}\label{thm_x2y2}
For all $x,y\in\B^2$, $s_{\B^2}(x,y)\geq s_{\B^2}(x_2,y_2)$.
\end{theorem}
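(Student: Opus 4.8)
The plan is to imitate the Euclidean analogue, Theorem~\ref{thm_x0y0}, with the common Euclidean midpoint replaced by the common intersection point of Corollary~\ref{cor_kpoint}. After a rotation one may assume the hyperbolic midpoint satisfies $0<q<1$, so Lemma~\ref{lem_x2y2_formulas} gives $x_2=u+ri$ and $y_2=\overline{x_2}=u-ri$ with $u,r>0$, and the chord $[x_2,y_2]$ is the vertical segment $\{\text{Re}=u\}$. By Lemma~\ref{lem_rhoball} the hyperbolic circle $S_\rho^1(q,R)$, $R=\rho_{\B^2}(x,y)/2$, is a Euclidean circle $S^1(j,h)$ whose centre $j$ is a positive real, and $[x,y]$, $[x_2,y_2]$, $[x_3,y_3]$ are all chords of it. By Corollary~\ref{cor_kpoint} they share a common point $k$; since $k\in L(x_3,y_3)$ (the real axis) and $k\in L(x_2,y_2)$ (the line $\{\text{Re}=u\}$), we get $k=u$. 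Thus $k$ is the Euclidean midpoint of $[x_2,y_2]$, the chord $[x_2,y_2]$ meets the real axis perpendicularly at $k$, and $k$ lies inside the segment $(x,y)$; a short computation gives $j<u<j+h<1$. If $x,y$ are collinear with the origin, the assertion follows at once from Lemma~\ref{lem_smetricinB_collinear} and Corollary~\ref{cor_diam_diskinB} as in Theorem~\ref{thm_x0y0}, so assume otherwise.

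First I would use $s_{\B^2}(a,b)=\sup_{z\in S^1}\frac{|a-b|}{|a-z|+|z-b|}$: choosing $z=z_2$, a boundary point realising the infimum for the pair $(x_2,y_2)$, gives $s_{\B^2}(x,y)\ge\frac{|x-y|}{|x-z_2|+|z_2-y|}$, while $s_{\B^2}(x_2,y_2)=\frac{|x_2-y_2|}{|x_2-z_2|+|z_2-y_2|}$. So it suffices to show that, among all chords $[a,b]$ of $S^1(j,h)$ through $k$, the quotient $\frac{|a-b|}{|a-z_2|+|z_2-b|}$ is smallest for $[a,b]=[x_2,y_2]$. (Note $[x_2,y_2]$ is already the shortest chord through $k$, being perpendicular to the diameter through $k$; that is Corollary~\ref{cor_xygrows}.)

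I would then split into cases according to Theorem~\ref{thm_x2y2_value}. If the infimum $\inf_{z\in S^1}(|x_2-z|+|z-y_2|)$ is attained at a single point, i.e.\ $|q|\ge t^2$ with $t={\rm th}(\rho_{\B^2}(x,y)/4)$, then by Remark~\ref{rmk_onlyonez} that point is $z_2=(x_2+y_2)/|x_2+y_2|=1$, and $j\le k<j+h<1=z_2$ on the real axis, so Theorem~\ref{fujimuras_result} applies: the quotient $\frac{|a-b|}{|a-1|+|1-b|}$ over chords through $k$ is decreasing in the angle $\measuredangle Z_2KA$, and the vertical chord $[x_2,y_2]$ attains the maximal value $\pi/2$, hence the minimal quotient. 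Chaining the two displays then gives $s_{\B^2}(x,y)\ge\frac{|x-y|}{|x-1|+|1-y|}\ge\frac{|x_2-y_2|}{|x_2-1|+|1-y_2|}=s_{\B^2}(x_2,y_2)$ in this case.

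The remaining case — infimum for $(x_2,y_2)$ attained at two points $z_2,\overline{z_2}$, i.e.\ $|q|<t^2$ and $s_{\B^2}(x_2,y_2)=|x_2|$ — is where I expect the real work. Now $z_2$ lies off the real axis, so Theorem~\ref{fujimuras_result} (stated only for a boundary point on the line through $j$ and $k$) does not apply as is, and because $k$ fails to be the Euclidean midpoint of $[x,y]$, Proposition~\ref{prop_triangles} (the engine of Theorem~\ref{thm_x0y0}) is unavailable too. The route I would try is the inversion identity $|a-z|=|a|\,|a^*-z|$ for $z\in S^1$, $a^*=a/|a|^2$, which turns $s_{\B^2}(x_2,y_2)=|x_2|$ into the statement that $z_2\in[x_2^*,y_2^*]$, combined with $|x-y|\ge|x_2-y_2|$ (Corollary~\ref{cor_xygrows}) and the explicit formulas of Theorems~\ref{thm_x2y2_value} and~\ref{smetricinB_forconjugate}; alternatively, to prove directly the off-axis strengthening of Fujimura's monotonicity needed here. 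Pushing one of these two routes through is the crux of the argument.
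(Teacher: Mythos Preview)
Your first case ($z_2=1$) is handled correctly and matches the paper exactly: with $k$ the common point of Corollary~\ref{cor_kpoint} one has $j\le k<j+h<1$ on the real axis, so Theorem~\ref{fujimuras_result} applies and yields the desired inequality immediately.

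The genuine gap is the second case, which you explicitly leave open. The paper's argument there follows neither of your two proposed routes; it needs no off-axis extension of Theorem~\ref{fujimuras_result} and no inversion identity. Instead it combines Corollary~\ref{cor_xygrows} with Proposition~\ref{prop_linediameter} and a single law-of-cosines comparison. Assume $\text{Im}(x_2)>0$ and $0\le\arg(x)\le\arg(x_2)$, take $z_2\in S^1$ with $\text{Im}(z_2)>0$ realising the infimum for $(x_2,y_2)$, and set $\psi=\measuredangle Y_2X_2Z_2$, which is obtuse by Theorem~\ref{find_zinB}. Since $|x_2-y_2|\le|x-y|$, fix $y'\in[x,y]$ with $|x-y'|=|x_2-y_2|$; Proposition~\ref{prop_linediameter} gives $s_{\B^2}(x,y)\ge s_{\B^2}(x,y')$. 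Now choose $z\in S^1$ with $\text{Im}(z)<\text{Im}(x)$ and $\measuredangle Y'XZ=\psi$; one checks $|x-z|\le|x_2-z_2|$. Then
\[
s_{\B^2}(x,y')\ \ge\ \frac{|x-y'|}{|x-z|+\sqrt{|x-y'|^2+|x-z|^2-2|x-y'||x-z|\cos\psi}},
\]
and because $\cos\psi<0$ this expression is decreasing in $|x-z|$; replacing $|x-z|$ by $|x_2-z_2|$ and using $|x-y'|=|x_2-y_2|$ recovers exactly $\frac{|x_2-y_2|}{|x_2-z_2|+|z_2-y_2|}=s_{\B^2}(x_2,y_2)$. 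So the missing idea is not a Fujimura-type monotonicity for an off-axis boundary point, but rather to first shorten $[x,y]$ to length $|x_2-y_2|$ (free by Proposition~\ref{prop_linediameter}) and then compare two triangles with a matched obtuse vertex angle at $x$ and $x_2$.
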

\begin{proof}
Let $q$ be the hyperbolic midpoint of $J[x,y]$ and $R=\rho_{\B^2}(x,q)$. If $q=0$, $s_{\B^2}(x,y)=s_{\B^2}(x_2,y_2)$ holds trivially. Thus, choose $x,y\in\B^2$ so that $0<q<1$. Now, either the infimum $\inf_{z_2\in S^1}(|x_2-z_2|+|z_2-y_2|)$ is given by one point $z_2\in S^1$ or two points on $S^1$. 

If there is only one point giving the infimum $\inf_{z_2\in S^1}(|x_2-z_2|+|z_2-y_2|)$, it must be $z_2=1$ by Remark \ref{rmk_onlyonez}. Let $\overline{B}^2(j,h)=\overline{B}_\rho^2(q,R)$ like in Lemma \ref{lem_rhoball}, and fix $k$ as in Corollary \ref{cor_kpoint}. By symmetry, we can assume that $\measuredangle 1KX=\mu\in[0,\pi\slash2]$. Note that, if $\mu=\pi\slash2$, then $x=x_2$ and $y=y_2$. Now, it follows from Theorem \ref{fujimuras_result} that
\begin{align*}
s_{\B^2}(x,y)\geq\frac{|x-y|}{|x-1|+|1-y|}\geq\frac{|x_2-y_2|}{|x_2-1|+|1-y_2|}=s_{\B^2}(x_2,y_2). 
\end{align*}

Suppose now that there are two possible points $z_2\in S^1$ for $\inf_{z_2\in S^1}(|x_2-z_2|+|z_2-y_2|)$. By symmetry, let $\text{Im}(x_2)>0$ and $0\leq\arg(x)\leq\arg(x_2)$. Fix $z_2$ so that $\text{Im}(z_2)>0$ and $\measuredangle OZ_2X_2=\measuredangle Y_2Z_2O$, where $o$ is the origin. By Theorem \ref{find_zinB}, this point $z_2$ gives the infimum $\inf_{z_2\in S^1}(|x_2-z_2|+|z_2-y_2|)$. Denote yet $\psi=\measuredangle Y_2X_2Z_2$, which is clearly an obtuse angle. 

By Corollary \ref{cor_xygrows}, we can fix $y'\in[x,y]$ so that $|x-y'|=|x_2-y_2|$. Let $z\in S^1$ with $\text{Im}(z)<\text{Im}(x)$ so that $\measuredangle Y'XZ=\psi$. Clearly, $|x-z|\leq|x_2-y_2|$. By Proposition \ref{prop_linediameter}, it follows that
\begin{align*}
s_{\B^2}(x,y)
&\geq s_{\B^2}(x,y')
\geq\frac{|x-y'|}{|x-z|+|z-y'|}\\
&=\frac{|x-y'|}{|x-z|+\sqrt{|x-y'|^2+|x-z|^2-2|x-y'||x-z|\cos(\psi)}}\\
&\geq\frac{|x_2-y_2|}{|x_2-z_2|+\sqrt{|x_2-y_2|^2+|x_2-z_2|^2-2|x_2-y_2||x_2-z_2|\cos(\psi)}}\\
&=\frac{|x_2-y_2|}{|x_2-z_2|+|z_2-y_2|}
=s_{\B^2}(x_2,y_2)
\end{align*}
\end{proof}

\begin{theorem}\label{thm_x3y3}
For all $x,y\in\B^2$,
\begin{align*}
s_{\B^2}(x,y)\leq s_{\B^2}(x_3,y_3)=\frac{(1+|q|)t}{1+|q|t^2}, 
\end{align*}
where $q$ is the hyperbolic midpoint of $J[x,y]$, and $t={\rm th}(\rho_{\B^2}(x,y)\slash4)$.
\end{theorem}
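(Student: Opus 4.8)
The plan is to control $s_{\B^2}(x,y)$ by the $s_{\B^2}$-diameter of the Euclidean disk that represents the hyperbolic disk $\overline{B}_\rho^2(q,R)$ with $R=\rho_{\B^2}(x,y)/2$, and then to observe that this diameter is attained exactly at the pair $\{x_3,y_3\}$. The three ingredients are Lemma \ref{lem_rhoball} (to realize the hyperbolic disk as a Euclidean disk), Corollary \ref{cor_diam_diskinB} (for the value of its $s_{\B^2}$-diameter), and the equality case of Lemma \ref{lem_smetricinB_collinear} (to evaluate $s_{\B^2}(x_3,y_3)$).

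Concretely, since $s_{\B^2}$ is invariant under rotations about the origin I would first assume $q\in[0,1)$ is real. The case $q=0$ is trivial, for then $y=-x$ and $s_{\B^2}(x,y)=|x|={\rm th}(\rho_{\B^2}(x,y)/4)$, which is the stated value with $|q|=0$; so suppose $0<q<1$. With $R=\rho_{\B^2}(x,q)$ and $t={\rm th}(R/2)={\rm th}(\rho_{\B^2}(x,y)/4)$, Lemma \ref{lem_rhoball} gives $\overline{B}_\rho^2(q,R)=\overline{B}^2(j,h)\subset\B^2$ with $j=q(1-t^2)/(1-q^2t^2)\in(0,1)$ and $h=(1-q^2)t/(1-q^2t^2)$. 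Since $x,y\in S_\rho^1(q,R)\subset\overline{B}^2(j,h)$, the definition of the triangular ratio metric together with Corollary \ref{cor_diam_diskinB} yields
\[
s_{\B^2}(x,y)\le s_{\B^2}\bigl(\overline{B}^2(j,h)\bigr)=\frac{h}{1-|j|}=\frac{h}{1-j}.
\]
The points $x_3,y_3$ are those of $S_\rho^1(q,R)=S^1(j,h)$ lying on the line $L(0,q)=\R$, that is $\{x_3,y_3\}=\{j+h,\,j-h\}$ (both in $\B^2$ as points of the hyperbolic circle); being collinear with the origin, the equality case of Lemma \ref{lem_smetricinB_collinear} gives $s_{\B^2}(x_3,y_3)=|x_3-y_3|/(2-|x_3+y_3|)=2h/(2-2j)=h/(1-j)$. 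Combining the two gives $s_{\B^2}(x,y)\le s_{\B^2}(x_3,y_3)$, and a routine simplification (using $1-j=(1-q)(1+qt^2)/(1-q^2t^2)$ and $h=(1-q)(1+q)t/(1-q^2t^2)$) rewrites $h/(1-j)$ as $(1+q)t/(1+qt^2)=(1+|q|)t/(1+|q|t^2)$, as claimed.

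I do not expect a serious obstacle: once Lemma \ref{lem_rhoball} puts the hyperbolic disk in Euclidean form and Corollary \ref{cor_diam_diskinB} supplies its $s_{\B^2}$-diameter, the proof is just the two short computations above. The one point that needs care is that the estimate $s_{\B^2}(x,y)\le s_{\B^2}(\overline{B}^2(j,h))$ is not wasteful, i.e. that $\{x_3,y_3\}$ genuinely realizes the $s_{\B^2}$-diameter of $\overline{B}^2(j,h)$ (equivalently, that $s_{\B^2}(x_3,y_3)$ equals $h/(1-|j|)$ rather than being merely $\le$ it); this is exactly what the equality clause of Lemma \ref{lem_smetricinB_collinear} provides, since $x_3$ and $y_3$ are collinear with the origin.
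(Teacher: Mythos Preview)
Your proof is correct and follows essentially the same route as the paper: realize the hyperbolic disk $\overline{B}_\rho^2(q,R)$ as a Euclidean disk via Lemma~\ref{lem_rhoball}, bound $s_{\B^2}(x,y)$ by its $s_{\B^2}$-diameter using Corollary~\ref{cor_diam_diskinB}, and identify that diameter with $s_{\B^2}(x_3,y_3)$ through the equality case of Lemma~\ref{lem_smetricinB_collinear}. Your explicit treatment of the degenerate case $q=0$ and the reduction to real $q$ by rotation are minor additions, but the argument is otherwise the same.
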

\begin{proof}
Let $q$ be the hyperbolic midpoint of $J[x,y]$. Fix then $R=\rho_{\B^2}(x,q)$ and $j,h,t$ as in Lemma \ref{lem_rhoball}. Now, $\overline{B}^2(j,h)=\overline{B}_\rho^2(q,R)$ and $t={\rm th}(\rho_{\B^2}(x,y)\slash4)$. By Corollary \ref{cor_diam_diskinB},
\begin{align*}
s_{\B^2}(x,y)&\leq s_{\B^2}(\overline{B}_\rho^2(q,R))
=s_{\B^2}(\overline{B}^2(j,h))
=\frac{h}{1-|j|}
=\frac{(1-|q|^2)t}{1-|q|^2t^2-|q|(1-t^2)}\\
&=\frac{(1-|q|^2)t}{1-|q|+|q|t^2-|q|^2t^2}
=\frac{(1-|q|)(1+|q|)t}{(1-|q|)(1+|q|t^2)}
=\frac{(1+|q|)t}{1+|q|t^2}.
\end{align*}
Since $|j|=|x_3+y_3|\slash2$ and $h=|x_3-y_3|\slash2$, by Lemma \ref{lem_smetricinB_collinear},
\begin{align*}
s_{\B^2}(x_3,y_3)&=\frac{|x_3-y_3|}{2-|x_3+y_3|}=\frac{h}{1-|j|},
\end{align*}
so the theorem follows.
\end{proof}

According to numerous computer tests, the following result holds.

\begin{conjecture}\label{conj_hmr}
For all $x,y\in\B^2$,\newline
1. $s_{\B^2}(x_2,y_2)\geq s_{\B^2}(x_0,y_0)$,\newline
2. $s_{\B^2}(x_3,y_3)\leq s_{\B^2}(x_1,y_1)$,\newline
3. $s_{\B^2}(x_3,y_3)\leq p_{\B^2}(x,y)$,\newline
where the points $x_i,y_i$, $i=0,...,3$, are as in Definitions \ref{def_emr} and \ref{def_hmr}. 
\end{conjecture}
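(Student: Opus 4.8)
\medskip
\noindent
The plan is to turn each of the three inequalities into an elementary inequality in a handful of real parameters, starting from the closed forms for $s_{\B^2}(x_0,y_0)$, $s_{\B^2}(x_2,y_2)$, $s_{\B^2}(x_1,y_1)$ and $s_{\B^2}(x_3,y_3)$ in Theorems \ref{thm_x0y0}, \ref{thm_x2y2_value}, \ref{thm_x1y1} and \ref{thm_x3y3}, the definition of $p_{\B^2}$, and $t=\tanh(\rho_{\B^2}(x,y)/4)$. The bridge between the Euclidean data $(|x+y|,|x-y|)$ and the hyperbolic data $(|q|,t)$ is the hyperbolic median identity
\begin{align*}
\cosh\rho_{\B^2}(0,x)+\cosh\rho_{\B^2}(0,y)=2\,\cosh\frac{\rho_{\B^2}(x,y)}{2}\,\cosh\rho_{\B^2}(0,q),
\end{align*}
obtained by adding the hyperbolic law of cosines in the geodesic triangles with vertices $\{0,x,q\}$ and $\{0,y,q\}$ (the geodesic rays from $q$ through $x$ and through $y$ are opposite and $\rho_{\B^2}(q,x)=\rho_{\B^2}(q,y)$), or by a direct computation from the hyperbolic midpoint formula and the distance formulas of Section \ref{section_pre}.

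For part 3 I would proceed as follows. Write $a=|x|$, $b=|y|$, $c=|q|$. From $s_{\B^2}(x_3,y_3)=(1+c)t/(1+ct^2)$ and $p_{\B^2}(x,y)^2=|x-y|^2/(|x-y|^2+4(1-a)(1-b))$, applying $\lambda\mapsto\lambda/(1-\lambda)$ and using $\sinh^2(\rho_{\B^2}(x,y)/2)=|x-y|^2/((1-a^2)(1-b^2))$ together with $\sinh(\rho_{\B^2}(x,y)/2)=2t/(1-t^2)$, the inequality $s_{\B^2}(x_3,y_3)\le p_{\B^2}(x,y)$ becomes equivalent to
\begin{align*}
(1+c)^2(1-t^2)\le(1+a)(1+b)(1-c^2t^2).
\end{align*}
With $\xi=\rho_{\B^2}(0,x)$, $\eta=\rho_{\B^2}(0,y)$, $\mu=\rho_{\B^2}(0,q)$, the identities $1\pm\tanh(s/2)=e^{\pm s/2}/\cosh(s/2)$ and the median identity (used to remove $\cosh(\rho_{\B^2}(x,y)/2)$) reduce this, with $\sigma=(\xi+\eta)/2$ and $\delta=(\xi-\eta)/2$, to
\begin{align*}
e^{\mu}\cosh\mu\,(\cosh\sigma+\cosh\delta)\le e^{\sigma}(\cosh^2\mu+\cosh\sigma\cosh\delta),\qquad 0\le|\delta|\le\mu\le\sigma,
\end{align*}
the admissible range of $\mu$ coming from $|\delta|\le\rho_{\B^2}(x,y)/2\le\sigma$. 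Since $s\mapsto e^{s}\cosh s$ is increasing, the difference of the two sides is nondecreasing in $\cosh\delta$, so it is enough to take $\delta=0$, i.e. to prove $e^{\mu}\cosh\mu(\cosh\sigma+1)\le e^{\sigma}(\cosh^2\mu+\cosh\sigma)$ for $0\le\mu\le\sigma$. Putting $p=e^{\mu}$, $P=e^{\sigma}$ and clearing $p^2P>0$, this is exactly $\phi(p^2)\ge0$ for the downward parabola $\phi(u)=-(1+2P)u^2+(2P^3+P^2-1)u+P^2$; since $\phi(1)=2(P+1)^2(P-1)\ge0$ and $\phi(P^2)=0$, we conclude $\phi\ge0$ on $[1,P^2]\ni p^2$, which proves part 3 (equality holding exactly for $x=0$ or $y=0$, where $s_{\B^2}=p_{\B^2}$).

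For parts 1 and 2 I would use Corollary \ref{cor_kpoint} and Theorem \ref{thm_theta_xy}: all pairs sharing a fixed hyperbolic circle $S_\rho^1(q,R)=S^1(j,h)$ with hyperbolic midpoint $q$ arise as chords of $S^1(j,h)$ through the common point $k\in L(0,q)$, and if $\theta$ is the angle of such a chord with $L(0,q)$, then its endpoints have Euclidean midpoint $k_E(\theta)$ whose modulus squared runs monotonically from $|j|^2$ (at $\theta=0$: the pair $\{x_3,y_3\}$) to $|k|^2$ (at $\theta=\pi/2$: the pair $\{x_2,y_2\}$), with half-length $r(\theta)=\sqrt{h^2-|j-k|^2\sin^2\theta}$. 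Writing $s_{\mathrm{eq}}(m,r)$ for the value of $s_{\B^2}$ on the origin-symmetric pair with Euclidean midpoint of modulus $m$ and half-distance $r$ (given by Theorem \ref{smetricinB_forconjugate}), part 1 becomes the assertion that $\theta\mapsto s_{\mathrm{eq}}(|k_E(\theta)|,r(\theta))$ is maximal at $\theta=\pi/2$, and part 2 becomes $h/(1-|j|)\le r(\theta)/(1-|k_E(\theta)|)$ for all admissible $\theta$, with equality at $\theta=0$. Once $|k_E(\theta)|$ and $r(\theta)$ are written out these are single-variable problems, to be handled with Proposition \ref{prop_functionf} and the monotonicity arguments of Section \ref{section_emr}.

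The main obstacle is precisely this last step. The function $s_{\mathrm{eq}}$ of Theorem \ref{smetricinB_forconjugate} is piecewise --- equal to $|x_0|$ outside the disk $\{|w-\frac{1}{2}|<\frac{1}{2}\}$ and to a different expression inside --- so the curve $\theta\mapsto(|k_E(\theta)|,r(\theta))$ may cross $\{|w-\frac{1}{2}|=\frac{1}{2}\}$, and one must control the maximum of a piecewise-defined function along it; likewise in part 2 the numerator $r(\theta)$ decreases in $|\theta|$ while the denominator $1-|k_E(\theta)|$ also moves, so the sign of the derivative is not transparent. This case analysis, rather than any single sharp estimate, is presumably why parts 1 and 2 are stated as a conjecture; a natural route is to settle first the sub-case in which $\{x,y\}$ stays in $\{|w-\frac{1}{2}|\ge\frac{1}{2}\}$ throughout the rotation, and then treat the complementary lens region, using Theorem \ref{fujimuras_result} to locate the optimal boundary point.
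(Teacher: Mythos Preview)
The paper does not prove this statement: it is recorded as a conjecture supported only by numerical experiments, with no argument offered for any of the three parts. So there is no ``paper's own proof'' to compare against.

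Your treatment of part 3 is, as far as I can see, a complete proof and therefore goes strictly beyond the paper. The reduction is sound: the substitution $\lambda\mapsto\lambda^2/(1-\lambda^2)$ together with $\sinh^2(\rho_{\B^2}(x,y)/2)=|x-y|^2/((1-|x|^2)(1-|y|^2))$ and $\sinh(\rho_{\B^2}(x,y)/2)=2t/(1-t^2)$ turns $s_{\B^2}(x_3,y_3)\le p_{\B^2}(x,y)$ into $(1+c)^2(1-t^2)\le(1+a)(1+b)(1-c^2t^2)$. The hyperbolic median identity (which indeed follows from the hyperbolic law of cosines at $q$ in the two triangles, using that the rays $qx$ and $qy$ are opposite) then eliminates $\tau=\rho_{\B^2}(x,y)/2$ and yields your displayed inequality in $(\sigma,\delta,\mu)$. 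The monotonicity in $\cosh\delta$ is correct because its coefficient in ${\rm RHS}-{\rm LHS}$ is $e^{\sigma}\cosh\sigma-e^{\mu}\cosh\mu\ge0$, and the parabola argument at $\delta=0$ checks out: $\phi(1)=2(P+1)^2(P-1)\ge0$, $\phi(P^2)=0$, the other root of the downward parabola is negative (product of roots $=-P^2/(1+2P)<0$), hence $\phi\ge0$ on $[1,P^2]$. One small correction: your equality characterisation is too narrow. Equality in the final step occurs exactly when $\mu=\sigma$, i.e.\ whenever $0,x,y$ lie on a common hyperbolic geodesic with $0$ not strictly between $x$ and $y$ (so $x_3=x$, $y_3=y$); the cases $x=0$ or $y=0$ are only the degenerate endpoints of this family. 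This does not affect the inequality.

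For parts 1 and 2 your plan---parametrising the family of pairs with fixed $(q,R)$ by the chord angle $\theta$ through the point $k$ of Corollary \ref{cor_kpoint}, and then studying $r(\theta)/(1-|k_E(\theta)|)$ and the piecewise function from Theorem \ref{smetricinB_forconjugate}---is a sensible framework, and you correctly identify the obstruction: the two regimes of Theorem \ref{smetricinB_forconjugate} force a case split, and along the $\theta$-curve both the numerator and the denominator in part 2 move in the same direction, so no monotonicity is immediate. This matches the paper's own assessment that these are open; you have not closed the gap, but neither has the paper.
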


Thus, by this conjecture, the hyperbolic midpoint rotation gives sharper estimations for $s_{\B^2}(x,y)$ than the Euclidean midpoint rotation or the point pair function.

\section{H\"older continuity}\label{section_holder}

In this section, we show how finding better upper bounds for the triangular ratio metric in the unit disk is useful when studying quasiconformal mappings. The behaviour of the distance between two points $x,y\in\B^n$ under a $K$-quasiconformal homeomorphism $f:\B^n\to\B^n=f(\B^n)$ has been studied earlier in numerous works, for instance, see \cite[Thm 16.14, p. 304]{hkvbook}. Our next theorem illustrates how finding a good upper limit for the value of the triangular ratio metric can give new information regarding this question.

\begin{theorem}\label{thm_s_formori}
If $f:\B^2\to\B^2=f(\B^2)$ is a $K$-quasiconformal map, the inequality
\begin{align*}
|f(x)-f(y)|\leq2^{3-1\slash K}\left(\frac{s_{\B^2}(x,y)}{1+s_{\B^2}(x,y)^2}\right)^{1\slash K},
\end{align*}
holds for all $x,y\in\B^2$.
\end{theorem}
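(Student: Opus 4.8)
The plan is to chain three estimates: passing from the Euclidean distance in the target disk to the pseudohyperbolic distance there, applying the quasiconformal Schwarz lemma, and then passing from the pseudohyperbolic distance in the source disk back to the triangular ratio metric by means of Theorem \ref{rhojsp_inB}. Throughout, write $s=s_{\B^2}(x,y)$.

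First I would record the elementary bound $|u-v|\le 2\,\text{th}(\rho_{\B^2}(u,v)/2)$ valid for all $u,v\in\B^2$: the formula $\text{th}(\rho_{\B^2}(u,v)/2)=|u-v|/|1-u\overline{v}|$ from Section \ref{section_pre} gives $|u-v|=|1-u\overline{v}|\,\text{th}(\rho_{\B^2}(u,v)/2)$, and $|1-u\overline{v}|\le 1+|u||v|\le 2$. Applying this with $u=f(x)$ and $v=f(y)$ yields $|f(x)-f(y)|\le 2\,\text{th}(\rho_{\B^2}(f(x),f(y))/2)$.

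Since $f$ is a $K$-quasiconformal homeomorphism of $\B^2$, the two-point quasiconformal Schwarz lemma (see \cite[Ch.~16]{hkvbook}, in the form involving the special distortion function $\varphi_{K,2}$) together with its standard majorant $\varphi_{K,2}(r)\le\lambda_2^{\,1-1/K}r^{1/K}=4^{\,1-1/K}r^{1/K}$, where $\lambda_2=4$ is the Gr\"otzsch constant, gives $\text{th}(\rho_{\B^2}(f(x),f(y))/2)\le 4^{\,1-1/K}(\text{th}(\rho_{\B^2}(x,y)/2))^{1/K}$. To finish, I would bound $\text{th}(\rho_{\B^2}(x,y)/2)$ by $2s/(1+s^2)$: by the double-angle identity for $\text{th}$ one has $\text{th}(\rho_{\B^2}(x,y)/2)=2\tau/(1+\tau^2)$ with $\tau=\text{th}(\rho_{\B^2}(x,y)/4)$; by Theorem \ref{rhojsp_inB}, $\tau\le j^*_{\B^2}(x,y)\le s$; and since $t\mapsto 2t/(1+t^2)$ is increasing on $[0,1)$ (its derivative is $2(1-t^2)/(1+t^2)^2\ge 0$), it follows that $\text{th}(\rho_{\B^2}(x,y)/2)\le 2s/(1+s^2)$. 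Substituting, the constant collapses as $2\cdot 4^{\,1-1/K}\cdot 2^{1/K}=2^{1}\cdot 2^{\,2-2/K}\cdot 2^{1/K}=2^{\,3-1/K}$, which gives precisely $|f(x)-f(y)|\le 2^{\,3-1/K}(s/(1+s^2))^{1/K}$.

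The one step that requires genuine care is the middle one: pinning down the exact statement and reference for the quasiconformal Schwarz lemma and for the distortion bound $\varphi_{K,2}(r)\le 4^{1-1/K}r^{1/K}$ — equivalently, recording that in the plane the relevant H\"older exponent is $1/K$ and that the constant originates from $\lambda_2=4$. Everything else is routine: the first step is immediate from a formula already displayed in Section \ref{section_pre}, the last step is a one-line monotonicity argument resting on Theorem \ref{rhojsp_inB}, and the final simplification of the constant is mere bookkeeping with exponents.
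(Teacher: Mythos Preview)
Your proof is correct and follows essentially the same route as the paper: pass to the pseudohyperbolic distance on the target side, apply the quasiconformal Schwarz lemma together with the bound $\varphi_K(r)\le 4^{1-1/K}r^{1/K}$, write $\text{th}(\rho_{\B^2}(x,y)/2)=2\tau/(1+\tau^2)$ with $\tau=\text{th}(\rho_{\B^2}(x,y)/4)$, and use Theorem~\ref{rhojsp_inB} to replace $\tau$ by $s_{\B^2}(x,y)$ via the monotonicity of $t\mapsto 2t/(1+t^2)$. The only cosmetic difference is that on the target side the paper routes through $|f(x)-f(y)|\le 2\,s_{\B^2}(f(x),f(y))\le 2\,\text{th}(\rho_{\B^2}(f(x),f(y))/2)$, whereas you obtain $|f(x)-f(y)|\le 2\,\text{th}(\rho_{\B^2}(f(x),f(y))/2)$ directly from $|1-u\overline{v}|\le 2$.
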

\begin{proof}
Define a homeomorphism $\varphi_K:[0,1]\to[0,1]$ as in \cite[(9.13), p. 167]{hkvbook} for $K>0$. By \cite[Thm 9.32(1), p. 167]{hkvbook} and \cite[(9.6), p. 158]{hkvbook},
\begin{align}\label{varphi_ineq}
\varphi_K(r)\leq4^{1-1\slash K}r^{1\slash K}
=4^{1-1\slash(2K)}\left(\frac{r}{2}\right)^{1\slash K},
\end{align}
where $0\leq r\leq1$ and $K\geq1$. Let $f$ be as above, $x,y\in\B^2$ and $t=\text{th}(\rho_{\B^2}(x,y)\slash4)$. By Theorem \ref{rhojsp_inB}, the Schwarz lemma (see \cite[Thm 16.2, p. 300]{hkvbook}) and the inequality \eqref{varphi_ineq},
\begin{align*}
s_{\B^2}(f(x),f(y))&\leq\text{th}\frac{\rho_{\B^2}(f(x),f(y))}{2}\leq\varphi_K\left(\text{th}\frac{\rho_{\B^2}(x,y)}{2}\right)
=\varphi_K\left(\frac{2t}{1+t^2}\right)\\
&\leq4^{1-1\slash(2K)}\left(\frac{t}{1+t^2}\right)^{1\slash K}
\leq4^{1-1\slash(2K)}\left(\frac{s_{\B^2}(x,y)}{1+s_{\B^2}(x,y)^2}\right)^{1\slash K}.
\end{align*}
By \cite[Lemma 11.12, p. 201; Prop. 11.15, p. 202]{hkvbook}, it follows from the inequality above that
\begin{align*}
|f(x)-f(y)|\leq2s_{\B^2}(f(x),f(y))
\leq2^{3-1\slash K}\left(\frac{s_{\B^2}(x,y)}{1+s_{\B^2}(x,y)^2}\right)^{1\slash K},
\end{align*}
which proves the theorem.
\end{proof}

Thus, as we see from Theorem \ref{thm_s_formori}, finding a suitable upper bound for the value of $s_{\B^2}(x,y)$ can help us estimating the distance of the points $x,y$ under the $K$-quasiconformal mapping $f$.

\begin{corollary}\label{cor_morip}
If $f$ is as in Theorem \ref{thm_s_formori}, the inequality
\begin{align*}
|f(x)-f(y)|
\leq2^{3-2\slash K}\left(\frac{\sqrt{|x-y|^2+4(1-|x|)(1-|y|)}|x-y|}{|x-y|^2+2(1-|x|)(1-|y|)}\right)^{1\slash K}.
\end{align*}
holds for all $x,y\in\B^2$.
\end{corollary}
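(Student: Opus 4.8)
The plan is to start from Theorem \ref{thm_s_formori}, which already gives
\[
|f(x)-f(y)|\le 2^{3-1\slash K}\left(\frac{s_{\B^2}(x,y)}{1+s_{\B^2}(x,y)^2}\right)^{1\slash K},
\]
and then to replace $s_{\B^2}(x,y)$ by the explicit point pair function $p_{\B^2}(x,y)$. The only genuinely structural input needed is the monotonicity of the scalar map $g(u)=u\slash(1+u^2)$: I would compute $g'(u)=(1-u^2)\slash(1+u^2)^2\ge 0$ for $u\in[0,1]$, so $g$ is increasing there. Since $0\le s_{\B^2}(x,y)\le p_{\B^2}(x,y)\le 1$ by Theorem \ref{rhojsp_inB}, this gives
\[
\frac{s_{\B^2}(x,y)}{1+s_{\B^2}(x,y)^2}\le\frac{p_{\B^2}(x,y)}{1+p_{\B^2}(x,y)^2}.
\]

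Next I would simplify the right-hand side in closed form. Using $d_{\B^2}(x)=1-|x|$ and $d_{\B^2}(y)=1-|y|$, write $p_{\B^2}(x,y)=|x-y|\slash\sqrt{|x-y|^2+4(1-|x|)(1-|y|)}$, so that, abbreviating $a=|x-y|^2$ and $b=4(1-|x|)(1-|y|)$, one has $p_{\B^2}(x,y)^2=a\slash(a+b)$ and $1+p_{\B^2}(x,y)^2=(2a+b)\slash(a+b)$. Hence
\[
\frac{p_{\B^2}(x,y)}{1+p_{\B^2}(x,y)^2}=\frac{|x-y|\,\sqrt{a+b}}{2a+b}=\frac{|x-y|\,\sqrt{|x-y|^2+4(1-|x|)(1-|y|)}}{2\bigl(|x-y|^2+2(1-|x|)(1-|y|)\bigr)}.
\]

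Finally I would substitute this bound into the displayed inequality from Theorem \ref{thm_s_formori}. Raising the last expression to the power $1\slash K$ pulls out a factor $2^{-1\slash K}$, which multiplies the leading constant $2^{3-1\slash K}$ to produce $2^{3-2\slash K}$, giving exactly the claimed estimate. I do not expect any real obstacle here: the argument is a three-line chain (monotone function $+$ $s_{\B^2}\le p_{\B^2}$ $+$ elementary algebra), and the only things to verify carefully are the sign of $g'$ on $[0,1]$ and the simplification of $p\slash(1+p^2)$.
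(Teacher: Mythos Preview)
Your proposal is correct and follows essentially the same route as the paper's own proof: start from Theorem~\ref{thm_s_formori}, pass from $s_{\B^2}$ to $p_{\B^2}$ using the monotonicity of $u\mapsto u\slash(1+u^2)$ on $[0,1]$ together with $s_{\B^2}\le p_{\B^2}$ from Theorem~\ref{rhojsp_inB}, then simplify and absorb the factor $2^{-1\slash K}$. If anything, you make the monotonicity step more explicit than the paper does.
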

\begin{proof}
It follows from Theorems \ref{thm_s_formori} and \ref{rhojsp_inB} that
\begin{align*}
|f(x)-f(y)|
&\leq2^{3-1\slash K}\left(\frac{s_{\B^2}(x,y)}{1+s_{\B^2}(x,y)^2}\right)^{1\slash K}
\leq2^{3-1\slash K}\left(\frac{p_{\B^2}(x,y)}{1+p_{\B^2}(x,y)^2}\right)^{1\slash K}\\
&=2^{3-1\slash K}\left(\frac{\sqrt{|x-y|^2+4(1-|x|)(1-|y|)}|x-y|}{2|x-y|^2+4(1-|x|)(1-|y|)}\right)^{1\slash K}\\
&=2^{3-2\slash K}\left(\frac{\sqrt{|x-y|^2+4(1-|x|)(1-|y|)}|x-y|}{|x-y|^2+2(1-|x|)(1-|y|)}\right)^{1\slash K}.
\end{align*}
\end{proof}

\begin{corollary}\label{cor_mori1}
If $f$ is as in Theorem \ref{thm_s_formori}, the inequality
\begin{align*}
|f(x)-f(y)|
\leq2^{3-2\slash K}\left(\frac{(2-|x+y|)|x-y|}{2-2|x+y|+|x|^2+|y|^2}\right)^{1\slash K}.
\end{align*}
holds for all $x,y\in\B^2$.
\end{corollary}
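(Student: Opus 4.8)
The plan is to mimic the proof of Corollary \ref{cor_morip}, but instead of bounding $s_{\B^2}(x,y)$ from above by $p_{\B^2}(x,y)$, use the sharper bound coming from the Euclidean midpoint rotation. Specifically, by Theorem \ref{thm_x0y0} we have the lower bound
\begin{align*}
s_{\B^2}(x,y)\geq\frac{|x-y|}{\sqrt{|x-y|^2+(2-|x+y|)^2}},
\end{align*}
but for an \emph{upper} estimate feeding into Theorem \ref{thm_s_formori} we should instead invoke Lemma \ref{lem_smetricinB_collinear}, which gives $s_{\B^2}(x,y)\leq|x-y|\slash(2-|x+y|)$. So the first step is to set $a=|x-y|\slash(2-|x+y|)$ and note $s_{\B^2}(x,y)\leq a$.

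The second step is to check that the function $r\mapsto r\slash(1+r^2)$ is increasing on $[0,1]$ (its derivative is $(1-r^2)\slash(1+r^2)^2\geq0$), so that $s_{\B^2}(x,y)\leq a$ implies
\begin{align*}
\frac{s_{\B^2}(x,y)}{1+s_{\B^2}(x,y)^2}\leq\frac{a}{1+a^2}.
\end{align*}
Here one must be slightly careful that $a\leq1$: since $|x-y|\leq 2-|x+y|$ always holds in $\B^2$ (this is exactly the content of $s_{\B^2}(x,y)\leq 1$ together with Lemma \ref{lem_smetricinB_collinear}, or directly from the triangle inequality $|x-y|\le (1-|x|)+(1-|y|)$ wait — more simply, $s_{\B^2}\le 1$ forces the collinear bound $\le 1$), the hypothesis of the monotonicity step is met.

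The third step is a pure algebraic simplification: plugging $a=|x-y|\slash(2-|x+y|)$ into $a\slash(1+a^2)$ gives
\begin{align*}
\frac{a}{1+a^2}=\frac{|x-y|(2-|x+y|)}{(2-|x+y|)^2+|x-y|^2}.
\end{align*}
One then expands the denominator: $(2-|x+y|)^2+|x-y|^2 = 4-4|x+y|+|x+y|^2+|x-y|^2$, and uses the parallelogram-type identity $|x+y|^2+|x-y|^2=2|x|^2+2|y|^2$ to rewrite it as $4-4|x+y|+2|x|^2+2|y|^2 = 2\bigl(2-2|x+y|+|x|^2+|y|^2\bigr)$. Substituting back into Theorem \ref{thm_s_formori} produces the factor $2^{3-1\slash K}\cdot 2^{-1\slash K}=2^{3-2\slash K}$ in front and the claimed fraction inside, finishing the proof.

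The only mild obstacle is the parallelogram identity bookkeeping and making sure the factor of $2$ is correctly absorbed into the exponent (turning $2^{3-1\slash K}$ into $2^{3-2\slash K}$), exactly as was done in Corollary \ref{cor_morip}; there is no genuine difficulty, since all the analytic work is already packaged in Theorem \ref{thm_s_formori} and Lemma \ref{lem_smetricinB_collinear}.
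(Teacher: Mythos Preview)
Your approach is exactly the paper's: combine Theorem~\ref{thm_s_formori} with Lemma~\ref{lem_smetricinB_collinear} and simplify via the parallelogram identity. The algebraic simplification in step three is correct, and the factor of $2$ is absorbed properly.

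However, there is a genuine gap in step two, precisely at the point you yourself flag as needing care. Your claim that $a=|x-y|/(2-|x+y|)\le 1$ for all $x,y\in\B^2$ is false. Neither of your proposed justifications works: the fact that $s_{\B^2}(x,y)\le 1$ says nothing about an \emph{upper bound} of $s_{\B^2}$ being $\le 1$, and the ``triangle inequality'' $|x-y|\le (1-|x|)+(1-|y|)$ is simply wrong (take $x=0.9$, $y=-0.9$). For a concrete counterexample to $a\le 1$, take $x=0.9$ and $y=0.9i$: then $|x-y|=|x+y|=0.9\sqrt{2}$ and $a=0.9\sqrt{2}/(2-0.9\sqrt{2})\approx 1.75$. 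Moreover, in this example one computes $s_{\B^2}(x,y)=0.9$ (via Theorem~\ref{smetricinB_forconjugate} after rotating $x,y$ into conjugate position), so $s/(1+s^2)\approx 0.497$ while $a/(1+a^2)\approx 0.431$; the monotonicity step therefore fails outright. The paper's one-line proof cites the same ingredients and does not address this issue either, so you have not deviated from the intended route --- but the step you correctly sensed was delicate does not go through as written.
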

\begin{proof}
Follows from Theorem \ref{thm_s_formori} and Lemma \ref{lem_smetricinB_collinear}, and the fact that $|x+y|^2+|x-y|^2=2|x|^2+2|y|^2$.
\end{proof}

\begin{corollary}\label{cor_morihyprot}
If $f$ is as in Theorem \ref{thm_s_formori}, all $x,y\in\B^2$ fulfill
\begin{align*}
|f(x)-f(y)|
\leq2^{3-1\slash K}\left(\frac{(1+|q|)(1+|q|t^2)t}{(1+|q|t^2)^2+(1+|q|)^2t^2}\right)^{1\slash K},
\end{align*}
where $q$ is the hyperbolic midpoint of $J[x,y]$, and $t={\rm th}(\rho_{\B^2}(x,y)\slash4)$.
\end{corollary}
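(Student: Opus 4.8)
The plan is to obtain this exactly as Corollaries \ref{cor_morip} and \ref{cor_mori1} are obtained, namely by feeding an explicit upper bound for $s_{\B^2}(x,y)$ into the inequality of Theorem \ref{thm_s_formori}; the only difference is that here the bound is the one produced by the hyperbolic midpoint rotation, i.e.\ $s_{\B^2}(x,y)\leq s_{\B^2}(x_3,y_3)=(1+|q|)t\slash(1+|q|t^2)$ from Theorem \ref{thm_x3y3}.

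First I would record the elementary fact that $g:[0,1]\to\R$, $g(r)=r\slash(1+r^2)$, is increasing, since $g'(r)=(1-r^2)\slash(1+r^2)^2\geq0$ on $[0,1]$. Next I note that $s_{\B^2}(x_3,y_3)=(1+|q|)t\slash(1+|q|t^2)$ lies in $[0,1]$; this is clear from the factorization $(1+|q|t^2)-(1+|q|)t=(1-t)(1-|q|t)\geq0$, valid because $t={\rm th}(\rho_{\B^2}(x,y)\slash4)\in[0,1)$ and $|q|<1$. Since $s_{\B^2}(x,y)\leq s_{\B^2}(x_3,y_3)$ and both values lie in $[0,1]$, monotonicity of $g$ gives
\begin{align*}
\frac{s_{\B^2}(x,y)}{1+s_{\B^2}(x,y)^2}\leq\frac{s_{\B^2}(x_3,y_3)}{1+s_{\B^2}(x_3,y_3)^2}.
\end{align*}
Substituting $s_{\B^2}(x_3,y_3)=(1+|q|)t\slash(1+|q|t^2)$ and clearing denominators, a one-line computation yields
\begin{align*}
\frac{s_{\B^2}(x_3,y_3)}{1+s_{\B^2}(x_3,y_3)^2}=\frac{(1+|q|)(1+|q|t^2)t}{(1+|q|t^2)^2+(1+|q|)^2t^2}.
\end{align*}
Finally I would insert this into $|f(x)-f(y)|\leq2^{3-1\slash K}\bigl(s_{\B^2}(x,y)\slash(1+s_{\B^2}(x,y)^2)\bigr)^{1\slash K}$ from Theorem \ref{thm_s_formori}, using that $u\mapsto u^{1\slash K}$ is increasing on $[0,\infty)$, which gives the asserted inequality.

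There is essentially no obstacle: the argument is the monotonicity of $g$ together with a routine algebraic simplification of $s\slash(1+s^2)$ at $s=(1+|q|)t\slash(1+|q|t^2)$. The only point deserving a moment's care is checking $s_{\B^2}(x_3,y_3)\in[0,1]$, so that $g$ is applied where it is monotone, but this is immediate (and in any case automatic, since $s_{\B^2}$ is $[0,1]$-valued).
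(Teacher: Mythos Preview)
Your proposal is correct and follows essentially the same route as the paper, which simply records that the corollary ``follows from Theorems \ref{thm_s_formori} and \ref{thm_x3y3}.'' Your explicit verification of the monotonicity of $r\mapsto r/(1+r^2)$ on $[0,1]$ and of the algebraic simplification just spells out what the paper leaves implicit.
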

\begin{proof}
Follows from Theorems \ref{thm_s_formori} and \ref{thm_x3y3}.
\end{proof}

\begin{remark}
Neither of Corollaries \ref{cor_mori1} and \ref{cor_morip} is better than the other for all points $x,y\in\B^2$. For $x=0.3$ and $y=0.3i$, the limit in Corollary \ref{cor_mori1} is sharper than the one in Corollary \ref{cor_morip} and, for $x=0.9$ and $y=0.9i$, the opposite holds. However, according to numerical tests related to Conjecture \ref{conj_hmr}, the result in Corollary \ref{cor_morihyprot} is always better than the ones in Corollaries \ref{cor_mori1} and \ref{cor_morip}.   
\end{remark}

By restricting how the point pair $x,y$ is chosen from $\B^2$, we can find yet better estimates.

\begin{corollary}\label{cor_moriWithR}
If $f$ is as in Theorem \ref{thm_s_formori}, the inequality
\begin{align*}
|f(x)-f(y)|
\leq2^{3-2\slash K}\left(\frac{|x-y|}{1-r}\right)^{1\slash K}.
\end{align*}
holds for all $x,y\in\B^2$ such that $|x+y|\slash2\leq r$.
\end{corollary}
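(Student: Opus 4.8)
The plan is to combine the general Mori-type bound of Theorem \ref{thm_s_formori} with the elementary estimate for $s_{\B^2}(x,y)$ that comes from Corollary \ref{cor_diam_diskinB}. First I would observe that the hypothesis $|x+y|/2\le r$ means precisely that the Euclidean midpoint $k=(x+y)/2$ of $[x,y]$ satisfies $|k|\le r$, and that both $x$ and $y$ lie in the closed disk $\overline B^2(k,\rho)$ where $\rho=|x-y|/2$. Hence by Corollary \ref{cor_diam_diskinB} (the $s_{\B^n}$-diameter of $\overline B^n(k,\rho)\subset\B^n$ equals $\rho/(1-|k|)$) we get
\begin{align*}
s_{\B^2}(x,y)\le\frac{\rho}{1-|k|}=\frac{|x-y|/2}{1-|k|}\le\frac{|x-y|}{2(1-r)}.
\end{align*}

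Next I would feed this into Theorem \ref{thm_s_formori}. Since the function $t\mapsto t/(1+t^2)$ is increasing on $[0,1]$ and $s_{\B^2}(x,y)\in[0,1]$, and since $1+s_{\B^2}(x,y)^2\ge1$, we may drop the denominator to obtain the cruder but cleaner bound
\begin{align*}
\frac{s_{\B^2}(x,y)}{1+s_{\B^2}(x,y)^2}\le s_{\B^2}(x,y)\le\frac{|x-y|}{2(1-r)}.
\end{align*}
Substituting this into the inequality of Theorem \ref{thm_s_formori} gives
\begin{align*}
|f(x)-f(y)|\le2^{3-1/K}\left(\frac{|x-y|}{2(1-r)}\right)^{1/K}=2^{3-1/K}2^{-1/K}\left(\frac{|x-y|}{1-r}\right)^{1/K}=2^{3-2/K}\left(\frac{|x-y|}{1-r}\right)^{1/K},
\end{align*}
which is exactly the asserted inequality.

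There is essentially no obstacle here: the only point requiring a moment's care is the verification that $x,y\in\overline B^2(k,\rho)$ with $\rho=|x-y|/2$ and $|k|\le r$, so that Corollary \ref{cor_diam_diskinB} applies and yields $r/(1-|k|)$-type control (one must also check $\overline B^2(k,\rho)\subset\B^2$, which holds because $|k|+\rho=|k|+|x-y|/2\le r+|x-y|/2$ and both $x=k+\rho e^{i\theta}$ type endpoints lie in $\B^2$; more directly, every point of $[x,y]$ and of the disk $\overline B^2(k,\rho)$ is a convex combination relevant to the segment, and the disk is the smallest disk through $x,y$, contained in $\B^2$ whenever $x,y\in\B^2$ — this is immediate since $\overline B^2(k,\rho)$ is the set of points at distance $\le\rho$ from the midpoint, all of which lie in $\B^2$ as $\B^2$ is convex and $x,y\in\B^2$). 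The arithmetic simplification $2^{3-1/K}\cdot2^{-1/K}=2^{3-2/K}$ is the only other step, and it is routine. I would therefore present this as a short two-line deduction citing Theorem \ref{thm_s_formori} and Corollary \ref{cor_diam_diskinB}.
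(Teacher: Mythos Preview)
Your overall strategy is sound and, once repaired, is essentially the same route the paper takes (the paper passes through Corollary~\ref{cor_mori1}, which itself comes from Theorem~\ref{thm_s_formori} together with Lemma~\ref{lem_smetricinB_collinear}, and then simplifies using $|x|^2+|y|^2\ge|x+y|^2/2$; you bypass that intermediate corollary and go directly). The arithmetic $2^{3-1/K}\cdot 2^{-1/K}=2^{3-2/K}$ and the crude estimate $s/(1+s^2)\le s$ are both fine.

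There is, however, a genuine gap in your justification of the key inequality $s_{\B^2}(x,y)\le |x-y|/(2(1-|k|))$. You invoke Corollary~\ref{cor_diam_diskinB}, which requires $\overline B^2(k,\rho)\subset\B^2$, and then argue that this inclusion holds because $\B^2$ is convex. That is false: convexity of $\B^2$ only gives $[x,y]\subset\B^2$, not the entire disk with diameter $[x,y]$. For a concrete counterexample take $x=0.9$ and $y=0.9i$; then $|k|=|x-y|/2=0.9/\sqrt{2}$, so $|k|+\rho=0.9\sqrt{2}>1$ and the disk $\overline B^2(k,\rho)$ protrudes outside $\B^2$. Thus Corollary~\ref{cor_diam_diskinB} does not apply.

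The fix is immediate: the inequality you actually need,
\[
s_{\B^2}(x,y)\le\frac{|x-y|}{2-|x+y|}=\frac{|x-y|}{2(1-|k|)},
\]
is precisely Lemma~\ref{lem_smetricinB_collinear}, which holds for all $x,y\in\B^2$ with no containment hypothesis. Replace the appeal to Corollary~\ref{cor_diam_diskinB} by a citation of Lemma~\ref{lem_smetricinB_collinear} and delete the flawed convexity paragraph; the rest of your argument then goes through unchanged.
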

\begin{proof}
Now,
\begin{align*}
\frac{2-|x+y|}{2-2|x+y|+|x|^2+|y|^2}
\leq\frac{2-|x+y|}{2-2|x+y|+|x+y|^2\slash2}
=\frac{1}{1-|x+y|\slash2}
\leq\frac{1}{1-r}
\end{align*}
so the result follows from Corollary \ref{cor_mori1}.
\end{proof}

\begin{corollary}
For all $x,y\in\B^2$ such that $|x+y|\leq1$,
\begin{align*}
|f(x)-f(y)|\leq2^{3-1\slash K}|x-y|^{1\slash K},   
\end{align*}
where $f$ is as in Theorem \ref{thm_s_formori}.
\end{corollary}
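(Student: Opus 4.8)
The plan is to obtain this corollary as the special case $r = 1\slash 2$ of Corollary~\ref{cor_moriWithR}. The hypothesis $|x+y| \le 1$ is exactly the statement $|x+y|\slash 2 \le 1\slash 2$, so Corollary~\ref{cor_moriWithR} applies with $r = 1\slash 2$. Since then $1-r = 1\slash 2$, it gives
\[
|f(x)-f(y)| \le 2^{3-2\slash K}\left(\frac{|x-y|}{1-r}\right)^{1\slash K} = 2^{3-2\slash K}\cdot 2^{1\slash K}\,|x-y|^{1\slash K} = 2^{3-1\slash K}\,|x-y|^{1\slash K},
\]
which is precisely the asserted inequality. The only thing to verify is the exponent identity $3 - 2\slash K + 1\slash K = 3 - 1\slash K$, so this is a one-line deduction rather than a genuine argument.

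Accordingly, there is no real obstacle here. This statement is worth recording separately only because the choice $r = 1\slash 2$ is natural and geometrically meaningful: the region $\{\,(x,y): |x+y|\le 1\,\}$ contains, for instance, every pair $x,y$ with $x,y \in B^2(1\slash 2)$, since then $|x+y| \le |x|+|y| < 1$; and on this region the resulting bound takes the clean Hölder form $|f(x)-f(y)| \le 2^{3-1\slash K}|x-y|^{1\slash K}$, with the constant now independent of the location of $x$ and $y$. One could also note that the chain of dependencies runs through Theorem~\ref{thm_s_formori} and Corollaries~\ref{cor_mori1} and \ref{cor_moriWithR}, so that ultimately the estimate rests on Lemma~\ref{lem_smetricinB_collinear} and the Schwarz lemma used in Theorem~\ref{thm_s_formori}.
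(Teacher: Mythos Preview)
Your proof is correct and is exactly the paper's approach: the paper's proof reads ``Follows from Corollary~\ref{cor_moriWithR},'' and you have spelled out precisely that deduction with $r=1\slash2$. The additional commentary about the geometric meaning of the region and the chain of dependencies is accurate but not needed for the argument itself.
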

\begin{proof}
Follows from Corollary \ref{cor_moriWithR}.
\end{proof}

\begin{remark}
The proof of Theorem \ref{thm_s_formori} is based on the Schwarz lemma of quasiregular mappings \cite[Thm 16.2, p. 300]{hkvbook} and therefore the results of this section hold also for quasiregular mappings with minor modifications.
\end{remark}

\def\cprime{$'$} \def\cprime{$'$} \def\cprime{$'$}
\providecommand{\bysame}{\leavevmode\hbox to3em{\hrulefill}\thinspace}
\providecommand{\MR}{\relax\ifhmode\unskip\space\fi MR }
\providecommand{\MRhref}[2]{%
  \href{http://www.ams.org/mathscinet-getitem?mr=#1}{#2}
}
\providecommand{\href}[2]{#2}

\end{document}